\author{Jan Bouwe van den Berg\thanks{Department of Mathematics, Vrije Universiteit Amsterdam, The Netherlands, \href{mailto:janbouwe@few.vu.nl}{janbouwe@few.vu.nl}; partially supported by NWO-VICI grant 639033109},  Wouter Hetebrij\thanks{Department of Mathematics, Vrije Universiteit Amsterdam, The Netherlands, \href{mailto:w.a.hetebrij@vu.nl}{w.a.hetebrij@vu.nl}.} \ and Bob Rink\thanks{Department of Mathematics, Vrije Universiteit Amsterdam, The Netherlands, \href{mailto:b.w.rink@vu.nl}{b.w.rink@vu.nl}.}}
\title{The parameterization method for center manifolds}
\date{}
	\crefname{equation}{}{}
	\crefname{enumi}{}{}
\newcounter{Rest}
\newtheorem{theorem}[Rest]{Theorem}
\newtheorem*{theorem*}{Theorem}
\newtheorem{corollary}[Rest]{Corollary}
\newtheorem*{corollary*}{Corollary}
\newtheorem{lemma}[Rest]{Lemma}
\newtheorem*{lemma*}{Lemma}
\newtheorem{proposition}[Rest]{Proposition}
\theoremstyle{remark}
\newtheorem{remark}[Rest]{Remark}
\newtheorem*{remark*}{Remark}
\numberwithin{equation}{subsection}
\numberwithin{Rest}{section}
\newcommand{\supnorm}[1]{\| #1 \|_0}
\newcommand{\onenorm}[1]{\| #1 \|_{1}}
\newcommand{\operatornorm}[1]{\| #1 \|_{\textup{op}}}
\newcommand{\norm}[1]{\| #1 \|}
\newcommand{\bignorm}[1]{\left\| #1 \right\|}
\newcommand{\tmatrix}[3]{\left( \begin{smallmatrix} #1 \\ #2 \\ #3 \end{smallmatrix} \right)}
\newcommand{\isdef}{:=}
\newcommand{\isfed}{=:}
\begin{document}

\maketitle

\begin{abstract}

In this paper, we present a generalization of the parameterization method, introduced by Cabr\'{e}, Fontich and De la Llave, to center manifolds associated to non-hyperbolic fixed points of discrete dynamical systems. As a byproduct, we find a new proof for the existence and regularity of center manifolds. However, in contrast to the classical center manifold theorem, our parameterization method will simultaneously obtain the center manifold and its conjugate center dynamical system. Furthermore, we will provide bounds on the error between approximations of the center manifold and the actual center manifold, as well as  bounds for the error in the conjugate dynamical system.

\end{abstract}

\section{Introduction}

Cabr{\'e}, Fontich and De la Llave introduced the parameterization method for invariant manifolds of dynamical systems on Banach spaces in \cites{Cabre03,Cabre03-2,Cabre05}. The goal of the parameterization method is to find a parameterization of the (un)stable manifold associated with an equilibrium point of the dynamical system. This parameterization is defined as a conjugacy between a dynamical system on the (un)stable eigenspace and the original dynamical system on the Banach space. To find a unique conjugacy, the dynamical system on the (un)stable eigenspace is fixed (it is given  as a polynomial map). Besides providing a new proof of the classical (un)stable manifold theorem, the method is useful for computational existence proofs of for example homoclinic and heteroclinic orbits, see \cites{VandenBerg11,Lessard14,VandenBerg15-2}. The method has also been used for constructing (un)stable manifolds of periodic orbits, see \cite{Castelli18}. Recent computational advances include applications to delay differential equations, see \cite{Groothedde17}, and partial differential equations, see \cite{Reinhardt19}.

 The goal of this paper is to generalize the parameterization method to center manifolds.  As explained above, the proof of the classical parameterization method fixes the dynamics on the (un)stable eigenspace. Having explicit dynamics on the (un)stable eigenspace a priori ensures that the parameterization of the manifold is unique. For center manifolds, we generally cannot choose the dynamics on the center eigenspace ourselves. Thus besides solving for the conjugacy, we also need to solve for the dynamics on the center eigenspace. To ensure a unique parameterization, we fix part of the conjugacy instead of fixing the conjugate dynamics.

To state the main theorem of this paper, we introduce some notation. Let $F{}{}:X \to X$ be a $C^n$ (for $n \ge 2$) discrete time dynamical system on a Banach space $X$, with non-hyperbolic fixed point $0$. Denote $F{}{}(x) = Ax + g(x) $, with $A$ the Jacobian of $F{}{}$ at $0$. We assume that $A$ has a spectral gap around the unit circle, and write $X = X_c \oplus X_h$, where $X_c$ is the eigenspace associated with the eigenvalues on the unit circle, and $X_h$ the eigenspace associated with the eigenvalues away from the unit circle. We write $A = \tmatrix{ A_c & 0 }{0 & A_h}{}$ with respect to the splitting $X = X_c \oplus X_h$. Our goal is to find a conjugacy $K{}{} :X_c \to X$ and a dynamical system $R : X_c \to X_c$ such that the diagram

\[
\begin{aligned}
\begindc{\commdiag}[50]
\obj(20,15)[1]{$X = X_c \oplus X_h$}
\obj(60,15)[2]{$X = X_c \oplus X_h$}

\obj(20,0)[3]{$X_c$}
\obj(60,0)[4]{$X_c$}

\mor{1}{2}{$F{}{} = A + g$}
\mor{3}{1}{$K{}{} = \iota + \left( \substack{  k_c \\ k_h} \right) $}[-1,0]
\mor{4}{2}{$K{}{} = \iota +  \left( \substack{ k_c \\ k_h} \right) $}[1,0]
\mor{3}{4}{$R = A_c + r$}

\enddc
\end{aligned}
\]
commutes, where $\iota:X_c \to X_c \oplus X_h$ is the inclusion map. We impose that the conjugacy $K{}{}$ is tangent to $X_c$ at $0$, i.e. $DK{}{}(0) = \tmatrix{\operatorname{Id}}{0}{}$, which implies that the linearization of $R$ at $0$ is given by $A_c$. We also fix part of the conjugacy $K{}{}$ by an explicit choice of the non-linear map $k_c : X_c \to X_c$. Our result is the following.

\begin{theorem*}[Parameterization of the center manifold] If $g:X \to X$ and $k_c: X_c \to X_c$ are $C^n$, sufficiently small in the $C^1$ norm and bounded in $C^2$ norm, see \cref{MainTheoremSection} for the explicit bounds, then there exist a $C^n$ map $k_h :  X_c \to  X_h$ and a $C^n$ map $r : X_c \to X_c$ such that 
\begin{align}
(A + g) \circ K{}{} = K{}{} \circ (A_c +r) && \text{ where }  K{}{} =\iota + \begin{pmatrix}  k_c \\ k_h \end{pmatrix}. \label{FirstEquation}
\end{align}
Furthermore, we have explicit bounds on the $C^1$ norms of $k_h$ and $r$ in terms of the spectral gap of $A$ and the $C^1$ norms of $g$ and $k_c$. These bounds are made precise in \cref{MainTheoremSection}.
\end{theorem*}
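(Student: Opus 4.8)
The plan is to rewrite the conjugacy equation \cref{FirstEquation} as a coupled system of fixed-point equations for $r$ and $k_h$, solve it by a nested contraction-mapping argument in a space of $C^1$ maps, extract the claimed $C^1$ estimates from the contraction constants, and finally bootstrap to $C^n$.

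First I would split along $X=X_c\oplus X_h$: writing $g=(g_c,g_h)$ and $K=\iota+(k_c,k_h)$ and abbreviating $R=A_c+r$, equation \cref{FirstEquation} is equivalent to the pair
\begin{align*}
r(x) &= A_c k_c(x) + g_c\bigl(K(x)\bigr) - k_c\bigl(R(x)\bigr), & k_h\bigl(R(x)\bigr) &= A_h k_h(x) + g_h\bigl(K(x)\bigr).
\end{align*}
I would then refine $X_h=X_s\oplus X_u$ with $A_h=\operatorname{diag}(A_s,A_u)$ and $g_h=(g_s,g_u)$, where $\operatorname{spec}(A_s)$ lies strictly inside and $\operatorname{spec}(A_u)$ strictly outside the unit circle, and use the spectral gap to pass to adapted norms in which $\operatornorm{A_s}\le\mu_s<1$, $\operatornorm{A_u^{-1}}\le\mu_u<1$ and $\operatornorm{A_c},\operatornorm{A_c^{-1}}\le\mu_c$, with $\mu_s\mu_c<1$ and $\mu_u\mu_c<1$ and $\mu_c$ as close to $1$ as we wish. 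Since $A_c$ is invertible, for $\supnorm{Dr}$ small $R=A_c+r$ is a global diffeomorphism of $X_c$ with $\operatornorm{DR^{-1}}$ close to $\mu_c$, and the $k_h$-equation splits into
\begin{align*}
k_s(y)&=A_s\,k_s\bigl(R^{-1}(y)\bigr)+g_s\bigl(K(R^{-1}(y))\bigr), & k_u(y)&=A_u^{-1}\bigl(k_u\bigl(R(y)\bigr)-g_u\bigl(K(y)\bigr)\bigr),
\end{align*}
in which $A_s$ and $A_u^{-1}$ both act as contractions.

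Next I would set up the nested fixed point. Let $\varepsilon$ be a common bound for the $C^1$ norms of $g$ and $k_c$ (so these maps are in particular globally bounded), and work in the Banach space of bounded $C^1$ maps $X_c\to X_h$ normalized by $k_h(0)=0$, $Dk_h(0)=0$ (consistent with $DK(0)=\tmatrix{\operatorname{Id}}{0}{}$ and the normalization of the given $k_c$). For a fixed such $k_h$, hence a fixed $K$, the center equation is a fixed-point problem $r=\Psi_{k_h}(r)$ whose Lipschitz constant in $r$ is at most $\supnorm{Dk_c}<1$ --- this is exactly what the a priori choice of the small map $k_c$ buys us, and it is what singles out a unique conjugacy --- so it has a unique solution $r=\mathcal R(k_h)$, with $\supnorm{\mathcal R(k_h)}$, $\supnorm{D\mathcal R(k_h)}$ and the $\supnorm{\,\cdot\,}$-Lipschitz dependence of $\mathcal R$ on $k_h$ all of size $O(\varepsilon)$. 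Substituting $R=A_c+\mathcal R(k_h)$ into the two equations above defines an operator $\mathcal T(k_h)=(\tilde k_s,\tilde k_u)$ by
\begin{align*}
\tilde k_s := A_s\bigl(k_s\circ R^{-1}\bigr)+g_s\circ K\circ R^{-1}, && \tilde k_u := A_u^{-1}\bigl(k_u\circ R-g_u\circ K\bigr).
\end{align*}
A direct estimate with the bounds above shows that, for $\varepsilon$ small, $\mathcal T$ maps a suitable closed ball $\mathcal B=\{\,k_h:\supnorm{k_h}\le\delta_0,\ \supnorm{Dk_h}\le\delta_1\,\}$ respecting the normalization into itself --- the inequalities $\mu_s\mu_c<1$, $\mu_u\mu_c<1$ being precisely what keeps the bound $\supnorm{Dk_h}\le\delta_1$ invariant --- and that $\mathcal T$ is a contraction on $\mathcal B$ in the weaker norm $\supnorm{\,\cdot\,}$, with factor $\max(\mu_s,\mu_u)+O(\varepsilon)<1$. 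Since $\mathcal B$ is closed, hence complete, in $\supnorm{\,\cdot\,}$, the operator $\mathcal T$ has a unique fixed point $k_h^\ast\in\mathcal B$; putting $r^\ast=\mathcal R(k_h^\ast)$ solves \cref{FirstEquation}, and the radii $\delta_0,\delta_1$ together with the estimates for $\mathcal R$ yield the explicit $C^1$ bounds on $k_h$ and $r$.

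Finally, to upgrade $(k_h^\ast,r^\ast)$ to $C^n$ I would bootstrap. Differentiating \cref{FirstEquation} $j$ times, $j=2,\dots,n$, the candidate derivative $D^jk_h^\ast$ is the unique fixed point of a linear operator of the same contractive type, with inhomogeneity assembled from $D^ig$, $D^ik_c$ ($i\le j$), the previously constructed lower-order derivatives of $k_h^\ast$ and $r^\ast$, and $D^jr^\ast$ (itself produced by a contraction of factor $\le\supnorm{Dk_c}<1$); its contraction factor is $\max(\mu_s,\mu_u)\,\mu_c^{\,j}+O(\varepsilon)$. Choosing the adapted norm with $\mu_c$ close enough to $1$ for the given $n$ makes all these factors $<1$ --- this is the point at which ``sufficiently small'' may legitimately depend on $n$, mirroring the $n$-dependent size of center manifolds --- while the inhomogeneities remain finite because $g$ and $k_c$ are bounded in $C^2$ and of class $C^n$; the usual difference-quotient argument identifies the fixed point with the true derivative, and iterating $j=2,\dots,n$ gives $k_h^\ast,r^\ast\in C^n$. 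The step I expect to be the main obstacle is closing the self-map and contraction estimates for the coupled operator $k_h\mapsto\mathcal R(k_h)\mapsto\mathcal T(k_h)$ with explicit constants: in contrast to the classical parameterization method the conjugate dynamics $R$ is itself unknown and enters the $k_h$-equation both directly and through $R^{-1}$, so one must track carefully how the $C^1$-smallness of $g$ and $k_c$ propagates through $\mathcal R$, through $R^{\pm1}$, and through the compositions, and then re-verify at every differentiation order that the contraction still closes.
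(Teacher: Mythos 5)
Your decomposition, the three component equations, the role of the spectral conditions $\operatornorm{A_u^{-1}}\operatornorm{A_c}^{\tilde n}<1$ and $\operatornorm{A_c^{-1}}^{\tilde n}\operatornorm{A_s}<1$, and the bootstrap via derived fixed-point operators for $D^j k_h$ all match the paper; the only organizational difference (nesting $r=\mathcal R(k_h)$ inside the $k_h$-equation rather than contracting on the triple $(r,k_u,k_s)$ simultaneously) is harmless. The genuine gap is at the $C^1$ level. Your ball $\mathcal B=\{k_h:\supnorm{k_h}\le\delta_0,\ \supnorm{Dk_h}\le\delta_1\}$ is \emph{not} closed under the $C^0$ norm inside the space of $C^1$ maps: a uniform limit of $C^1$ functions with derivatives bounded by $\delta_1$ is only Lipschitz with constant $\delta_1$, not differentiable, on a general Banach space. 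So the fixed point your $C^0$-contraction produces is a priori merely Lipschitz, and your regularity bootstrap, which starts at $j=2$, silently assumes the base case $j=1$ that is in fact the crux of the theorem (this is exactly the classical difficulty with graph-transform arguments for center manifolds).

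The paper closes this gap differently: it proves that the operator is a contraction in the $C^1$ norm, which forces it to control the modulus of continuity of first derivatives under composition and hence to work on a set of $C^2$ functions with a \emph{uniformly small} bound $\delta(\varepsilon)$ on second derivatives (the set $\Gamma_1(\delta(\varepsilon))$ of \cref{UniformSecondDerivativeBound}); since the theorem only assumes the second derivatives of $g$ and $k_c$ are bounded, a rescaling $h^\varepsilon(x)=\varepsilon^{-1}h(\varepsilon x)$ (\cref{SecondDerivativeBounded}) is needed to make them small without changing the $C^1$ data. Even then the $C^1$-limit is only $C^{1,\mathrm{Lip}}$, and a second fixed-point operator $\Theta^{[2]}$ acting on candidate derivatives, together with a uniqueness argument in a larger $C^0$ set, identifies the Lipschitz derivative with a genuinely $C^1$ object. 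To repair your proposal you must either carry out this $C^1$-contraction on a $C^2$-bounded set (and add the rescaling step and the smallness requirement on $\supnorm{D^2g},\supnorm{D^2k_c}$, which your hypotheses as used do not supply), or run your derived-operator-plus-difference-quotient argument already at $j=1$ and justify why the candidate derivative is the true derivative of a function that is so far only Lipschitz --- which, as the paper remarks after \cref{C2Existence}, requires extra structure such as uniform convexity of $X$ if one only has a $C^0$ contraction in hand.
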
\noindent
Despite $k_h$ and $r$ being $C^n$, we only give explicit bounds on their $C^1$ norm. We note that as $g$ is at least $C^2$, we may multiply $g$ as usual with a cut-off function $\xi$ such that $x \mapsto Ax + g (x)\xi(x)$ satisfies the conditions of \cref{MainTheorem}. Then we find a local center manifold for $x \mapsto Ax + g(x)$ on the region where $\xi = 1$, as well as local conjugate dynamics on the center eigenspace.

Our theorem provides a new proof of the classical center manifold theorem, see \cites{Carr82,Vanderbauwhede89,Schinas92}. The classical proof of the center manifold theorem is based on the variation of constants formula. Our proof is more elementary. It simultaneously finds $k_h$ and $r$ as fixed point of an operator on a function space, which is a contraction on an appropriately chosen subset. The freedom to choose $k_c \neq 0$ moreover allows us to obtain the dynamics on the center eigenspace directly in a desired normal form.

As we mentioned in the beginning, our work is inspired by the parameterization method for invariant (un)stable manifolds in \cites{Cabre03,Cabre03-2,Cabre05}, which we shall refer to as the classical parameterization method. The main difference between the classical and the proposed parameterization method is the treatment of the conjugate dynamics $R$. The classical parameterization method fixes a polynomial map $R :X_{u/s} \to X_{u/s}$ which determines the dynamical system on the (un)stable eigenspace, whereas our method produces a $C^n$ map $R: X_c \to X_c$. 

It follows from \cite{Farres10} that one can find Taylor approximations $R_0$  and $K{}{}_0$ of the maps $R$ and $K{}{}$ up to any finite order. These $R_0$ and $K{}{}_0$, will then almost satisfy the conjugacy equation, i.e. $F{}{} \circ K{}{}_0 - K{}{}_0 \circ R_0$ will be of order $\norm{x}^{m}$. This will imply that the real solution of the conjugacy equation lies very close to $R_0$ and $K{}{}_0$. More precisely, we can find rigorous bounds for $R - R_0$ and $K{}{} - K{}{}_0$. Hence, from the Taylor approximations we can extract very detailed information of the true dynamical behavior on the center manifold.

Another generalization of the classical parameterization method concerning center manifolds can be found in \cites{Baldoma16,Baldoma16-2}. This generalization finds certain submanifolds of the center manifold at parabolic fixed points. A fixed point is called parabolic if the linearization of the dynamical system is the identity. As in the classical parameterization method, the conjugate vector field is polynomial in this case, and the proof in \cites{Baldoma16,Baldoma16-2} follows roughly the same analysis as the proof of the classical parameterization method. A drawback of this result is that it produces invariant manifolds which are only continuous at the origin. Furthermore, it imposes that the linearization of the dynamical system at the fixed point is the identity on the center eigenspace. Our theorem on the other hand produces invariant manifold which are everywhere $C^n$ and does not impose that the linearization restricted to the center eigenspace is the identity.
 
\subsubsection*{Outline of the paper}

Our paper consists of three parts. We start by introducing notation in the following section and by giving a more precise formulation of the main theorem in \cref{MainTheoremSection}. As a corollary of the main theorem, we find an upper bound between the error of an almost solution $\mathcal{M}$ of \cref{FirstEquation} and the actual solution $\Lambda$ of \cref{FirstEquation} in terms of how well $\mathcal{M}$ solves \cref{FirstEquation}. Furthermore, we introduce a fixed point problem in \cref{C1Section}. This fixed point problem will produce the conjugacy and conjugate dynamics of \cref{MainTheorem}. Finally, we will prove that the fixed point problem defines a contraction in $C^1$. 

In the second part of our paper, which consists of \cref{C2ContinuitySection,CmContinuitySection,UniquenessSection}, we will obtain the smoothness results of the main theorem by means of two bootstrapping arguments. The first bootstrapping argument, in \cref{C2ContinuitySection}, shows that once we have a $C^1$ conjugacy and conjugate dynamics, they are in fact both $C^2$. The second argument, in \cref{CmContinuitySection}, shows that we can inductively increase the smoothness from $C^2$ to $C^n$. Finally, we prove in \cref{UniquenessSection} that our center manifold is unique, and we derive the estimate on $\Lambda - \mathcal{M}$.

\subsubsection*{Future work}

Our proposed method is defined for general discrete time dynamical systems. Although it is not proven in this paper, we are convinced that the method can be generalized to continuous time dynamical systems. Furthermore, our method can be used to parameterize different manifolds than only the center manifold, for instance the center-(un)stable or (un)stable manifold. In the latter case, our method should even work if the (un)stable manifold contains infinitely many resonances.

\subsection{Notation and conventions}\label{NotationSection}

We use the following notation and conventions in this paper.
\begin{itemize}
\item For a Banach space $X = \bigoplus_{i=1}^n X_i$, we assume that the norm on $X$ satisfies
\begin{align}
\norm{x} = \max_{1 \le i \le n} \left\{ \norm{x_i}_i \right\},  && \text{ for } x = \oplus_{i=1}^n x_i\label{DesiredPropertyNorm}
\end{align}
where $x_i \in X_i$ and $\norm{\cdot}_i$ the norm on $X_i$. If $X$ is equipped with another norm, we can always define an equivalent norm on $X$ which satisfies \cref{DesiredPropertyNorm} and leaves the norm unchanged on $X_i$.

\item For functions $f: X \to Y$ between Banach spaces, we denote with
\begin{align*}
\norm{f}_n \isdef \max_{0 \le m \le n} \sup_{x \in X} \norm{D^mf(x)}
\end{align*}
the $C^n$ norm of $f$ for $n \ge 0$.

For $X$ and $Y$ Banach spaces, we denote with 
\begin{align*}
C^n_b(X,Y) &\isdef \left\{ f  : X \to Y \mid f \text{ is } C^n \text{ and } \norm{f}_n < \infty \right\}.
\end{align*}
the Banach space of all $C^n$ bounded functions between $X$ and $Y$.

\item For a linear operator $A: X \to Y$ between Banach spaces, we denote with
\begin{align*}
\operatornorm{A} \isdef \sup_{\norm{x} =1} \norm{Ax}
\end{align*}
the operator norm of $A$.

For $X$ and $Y$ Banach spaces, we denote with 
\begin{align*}
\mathcal{L}(X,Y) \isdef \left\{ A: X \to Y \mid A \text{ is a linear operator and } \operatornorm{A} < \infty \right\}
\end{align*}
the Banach space of all bounded linear operators between $X$ and $Y$.

\item For a $k$-linear operator $A: X^k \to Y$ between Banach spaces, we denote with
\begin{align*}
\operatornorm{A} \isdef \sup_{\substack{ \norm{x_i} \le 1  \\ 1 \le i \le k}} \norm{A(x_1, \dots, x_k)}.
\end{align*}
the operator norm of $A$.

For $X$ and $Y$ Banach spaces, we denote with 
\begin{align*}
\mathcal{L}^k(X,Y) \isdef \left\{A : X^k \to Y \middle\vert \  A \text{ is a $k$-linear operator and $\operatornorm{A} < \infty$} \right\}
\end{align*}
the Banach space of all bounded $k$-linear operators between $X^k$ and $Y$.

\item For Banach spaces $X$, $Y$ and $Z$ and $f \in C^n_b(Y,Z)$ and $g \in C^n_b(X,Y)$, the $k^{\text{th}}$ derivative of $f \circ g$ is given by
\begin{align}
D^k[f \circ g](x) = D^kf(g(x)) ( \underbrace{Dg(x) , \dots, Dg(x)}_{k \text{ times } Dg(x)} ) + \text{lower derivatives of } f. \label{ChainRuleExplained}
\end{align}
We can identify the $k^\text{th}$ derivative of $f$ at $g(x)$ with a symmetric $k$-linear operator between $Y^k$ and $Z$. This motivates us to define the shorthand notation
\begin{align*}
A^{\otimes k} \isdef \underbrace{\left( A, \dots , A\right)}_{k \text{ times } A} \in \mathcal{L}(X^k,Y^k) && \text{ for } A \in \mathcal{L}(X,Y).
\end{align*}
Hence we may rewrite \cref{ChainRuleExplained} as
\begin{align*}
D^k[f \circ g](x) = D^kf(g(x)) \left( Dg(x) \right)^{\otimes k} + \text{lower derivatives of } f,
\end{align*}
where $D^kf(g(x)) \in \mathcal{L}^k(Y,Z)$.

\end{itemize}

\section{A quantitative formulation of the parameterized center manifold theorem}\label{MainTheoremSection}

We will now formulate the quantitative version of the parameterization of the center manifold theorem.

\begin{theorem}[Parameterization of the center manifold]\label{MainTheorem} Let $X$ be a Banach space and $F{}{}: X \to X $ a  $C^n$, $n \ge 2$, discrete dynamical system on $X$ such that $0$ is a fixed point of $F{}{}$. Denote $F{}{} = A + g$ with $A \isdef Df(0)$ and let $k_c : X_c \to X_c$ be chosen. Assume that
\begin{enumerate}
\item\label{MainTheoremSubpaces } There exist closed $A$-invariant subspaces $X_c$, $X_u$ and $X_s$ such that $X = X_c \oplus X_u \oplus X_s$. We write $A = \tmatrix{A_c & 0 & 0}{0 & A_u & 0}{0 & 0 & A_s}$ where we define $A_c \isdef A \big|_{X_c}$, and similarly define $A_u$ and $A_s$.
\item\label{MainTheoremOperators } The linear operators $A_c$ and $A_u$ are invertible.
\item\label{MainTheoremNorm } The norm on $X$ is such that
\begin{align*}
\operatornorm{A_c^{-1} }^{\tilde{n}} \operatornorm{ A_s } < 1 && \text{ and } && \operatornorm{A_u^{-1} } \operatornorm{A_c}^{\tilde{n}} < 1 && \text{ for all }  1 \le \tilde{n} \le n.
\end{align*}
\item\label{MainTheoremKnownBounds } The non-linearities $g$ and $k_c$ satisfy
\begin{align*}
g &\in \left\{ h \in C^{n}_b(X,X) \mid h(0)=0, \   Dh(0) = 0 \text{ and }\supnorm{Dh} \le L_g \right\}, \\ 
k_c &\in \left\{ h \in C^{n}_b(X_c,X_c) \mid h(0) = 0 , \ Dh(0) = 0 \text{ and } \supnorm{Dh} \le L_c \right\},
\end{align*}
for $L_g$ and $L_c$ small enough; see \cref{SmallnessConditions} for the explicit inequalities that $L_g$ and $L_c$ should satisfy.
\end{enumerate}
Then there exist a $C^{n}$ conjugacy $K{}{} :  X_c \to X$ and $C^{n}$ discrete dynamical system $R = A_c + r : X_c \to X_c$ such that
\begin{align}
(A + g) \circ K{}{} = K{}{} \circ (A_c +r) \label{ConjugacyEquation}.
\end{align}
Furthermore, $K{}{}$ and $R$ have the following properties:
\begin{enumerate}[label=\Alph*)]
\item\label{PropertiesR} The dynamical system $R=A_c + r$ is globally invertible, with its inverse given by $T = A_c^{-1} + t$, where
\begin{align*}
r &\in \left\{ h \in C^{n}_b(X_c,X_c) \mid h(0)=0, \   Dh(0) = 0 \text{ and }\supnorm{Dh} \le L_r \right\}, \\
t &\in \left\{ h \in C^{n}_b(X_c,X_c) \mid h(0)=0, \   Dh(0) = 0 \text{ and }\supnorm{Dh} \le L_t \right\}.
\end{align*}
The constants $L_r$ and $L_t$ depend on $L_g$ and $L_c$. See \cref{SmallnessConditions} for their definition.
\item\label{PropertiesK} The conjugacy $K{}{}$ is given by
\begin{align*}
K{}{} = \iota +  \begin{pmatrix}  k_c \\ k_u \\ k_s \end{pmatrix}
\end{align*}
with $\iota: X_c \to X$ the inclusion map and
\begin{align*}
k_u &\in \left\{ h \in C^{n}_b(X_c,X_u) \mid h(0)=0, \   Dh(0) = 0 \text{ and }\supnorm{Dh} \le L_u \right\}, \\ 
k_s &\in \left\{ h \in C^{n}_b(X_c,X_s) \mid h(0)=0, \   Dh(0) = 0 \text{ and }\supnorm{Dh} \le L_s \right\}. 
\end{align*}
The constants $L_u$ and $L_s$ depend on $L_g$ and $L_c$. See \cref{SmallnessConditions} for their definition.
\end{enumerate}
\end{theorem}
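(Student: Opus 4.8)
The plan is to recast \cref{ConjugacyEquation} as a fixed point equation for the triple $(k_u,k_s,r)$ and to solve it with the contraction mapping principle in $C^1$. Set $K\isdef\iota+\tmatrix{k_c}{k_u}{k_s}$, $R\isdef A_c+r$ and write $g=\tmatrix{g_c}{g_u}{g_s}$. Splitting both sides of \cref{ConjugacyEquation} along $X=X_c\oplus X_u\oplus X_s$ shows it is equivalent to the three identities
\begin{align*}
(\operatorname{Id}+k_c)\circ R &= A_c\circ(\operatorname{Id}+k_c)+g_c\circ K, \\
k_u\circ R &= A_u\, k_u + g_u\circ K, \\
k_s\circ R &= A_s\, k_s + g_s\circ K.
\end{align*}
As a preliminary step I would prove: if $\supnorm{Dr}\le L_r$ with $L_r$ small relative to $1/\operatornorm{A_c^{-1}}$, then $R=A_c\circ(\operatorname{Id}+A_c^{-1}r)$ is a $C^n$ diffeomorphism of $X_c$ (a Lipschitz perturbation of an isomorphism), its inverse has the form $T=A_c^{-1}+t$ with $t(0)=0$, $Dt(0)=0$ and $\supnorm{Dt}\le L_t$ for an explicit $L_t=L_t(L_r)$ tending to $0$ with $L_r$, and $t$ depends Lipschitz-continuously on $r$ in $C^1$.

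Next I would solve each identity for one unknown. Since $\supnorm{Dk_c}\le L_c$ is small, $\operatorname{Id}+k_c$ is a homeomorphism of $X_c$, so the first identity gives
\[
r \isdef (\operatorname{Id}+k_c)^{-1}\circ\bigl(A_c\circ(\operatorname{Id}+k_c)+g_c\circ K\bigr)-A_c .
\]
Precomposing the stable identity with $T$ and iterating forward gives, using that $\operatornorm{A_s}\bigl(\operatornorm{A_c^{-1}}+L_t\bigr)<1$ (which follows from the spectral-gap hypothesis once $L_t$ is small),
\[
k_s \isdef \sum_{j\ge 1} A_s^{\,j-1}\bigl(g_s\circ K\circ T^{\,j}\bigr),
\]
a series convergent in $C^1$. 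Using invertibility of $A_u$ and iterating the unstable identity backward gives, using $\operatornorm{A_u^{-1}}\bigl(\operatornorm{A_c}+L_r\bigr)<1$ (again from the spectral-gap hypothesis once $L_r$ is small),
\[
k_u \isdef -\sum_{j\ge 1} A_u^{-j}\bigl(g_u\circ K\circ R^{\,j-1}\bigr),
\]
also convergent in $C^1$. These formulas define an operator $\mathcal{G}:(k_u,k_s,r)\mapsto(\bar k_u,\bar k_s,\bar r)$ whose fixed points are exactly the solutions of \cref{ConjugacyEquation}.

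I would then apply Banach's theorem to $\mathcal{G}$ on the product of the three closed balls $\{h\in C^1_b:h(0)=0,\ Dh(0)=0,\ \supnorm{Dh}\le L\}$ with $L=L_u,L_s,L_r$ respectively, a complete metric space in the $C^1$ distance. That $\mathcal{G}$ is a self-map reduces to: (i) $\bar k_u,\bar k_s,\bar r$ vanish together with their derivatives at $0$, which uses $g(0)=Dg(0)=0$, $k_c(0)=Dk_c(0)=0$ and $R(0)=T(0)=0$; and (ii) differentiating the series termwise and summing the geometric bounds (with ratios $\operatornorm{A_s}(\operatornorm{A_c^{-1}}+L_t)<1$ and $\operatornorm{A_u^{-1}}(\operatornorm{A_c}+L_r)<1$) yields inequalities of the shape $L_u,L_s\gtrsim L_g$ and $L_r\gtrsim L_g+\operatornorm{A_c}L_c$ — precisely the conditions collected in \cref{SmallnessConditions}, simultaneously solvable once $L_g,L_c$ are small because the spectral-gap hypothesis is strict and hence robust under the $L_r,L_t$-corrections. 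For the contraction estimate, a $C^1$-perturbation of $(k_u,k_s,r)$ perturbs $K$, $R$ and $T$ Lipschitz-continuously (with $T-\tilde T$ controlled by $r-\tilde r$ via the preliminary lemma), and feeding this through the series costs only an extra factor $j$ in the $j$-th term (from telescoping $R^{\,j-1}-\tilde R^{\,j-1}$ and $T^{\,j}-\tilde T^{\,j}$), still summable against the geometric ratios; the Lipschitz constant of $\mathcal{G}$ then carries a prefactor $\propto L_g$ (resp.\ $L_g+\operatornorm{A_c}L_c$) and is $<1$ after shrinking $L_g,L_c$. The unique fixed point yields $K$ and $R$ solving \cref{ConjugacyEquation}, and the ball memberships of $k_u,k_s,r$ (and of $t$, from the preliminary lemma) give \cref{PropertiesR} and \cref{PropertiesK}. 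Only $C^1$ regularity is obtained this way; the full $C^n$ statement and the remaining cases $\tilde n\ge 2$ of the spectral-gap hypothesis are handled by the bootstrapping arguments of \cref{C2ContinuitySection,CmContinuitySection}.

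The main obstacle I anticipate is the interlocking bookkeeping of the constants: one must exhibit a single admissible choice of $L_u,L_s,L_r,L_t$ meeting the self-map inequalities, the two convergence-ratio conditions, and the contraction condition simultaneously for $L_g,L_c$ small, while controlling the $j$-fold compositions $K\circ R^{\,j-1}$ and $K\circ T^{\,j}$ and their $C^1$-dependence on $(k_u,k_s,r)$ uniformly in $j$. Everything else is a long but routine sequence of estimates.
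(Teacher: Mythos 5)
Your reduction of \cref{ConjugacyEquation} to the three componentwise identities matches the paper's starting point, but you then solve the hyperbolic components by Lyapunov--Perron-type series $\sum_{j}A_u^{-j}(g_u\circ K\circ R^{j-1})$ and $\sum_j A_s^{j-1}(g_s\circ K\circ T^j)$, whereas the paper keeps the one-step fixed-point form $k_u=A_u^{-1}k_u\circ R-A_u^{-1}g_u\circ K$, $k_s=A_sk_s\circ T+g_s\circ K\circ T$ and contracts in $(r,k_u,k_s)$ jointly. Your route is closer to the variation-of-constants argument the paper deliberately avoids; it is not wrong in principle, and your explicit inversion of $\operatorname{Id}+k_c$ in the center component is a legitimate variant of the paper's implicit equation for $r$.

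The genuine gap is in the contraction step. You propose to apply Banach's theorem on the product of closed balls $\{h\in C^1_b : h(0)=0,\ Dh(0)=0,\ \supnorm{Dh}\le L\}$ with the $C^1$ distance, but on such a set the $C^1$-Lipschitz estimate for $\mathcal{G}$ cannot be established. Differentiating the $j$-th term of the unstable series and comparing two iterates produces, after splitting, the term $D(g_u\circ K)(R^{j-1})-D(g_u\circ K)(\tilde R^{j-1})$, and inside the telescoped product $DR^{j-1}-D\tilde R^{j-1}$ one meets $Dr(R^{i}(x))-Dr(\tilde R^{i}(x))$: in both cases the \emph{same} derivative is evaluated at two \emph{different} base points, so bounding the difference by $\onenorm{r-\tilde r}$ requires a uniform Lipschitz bound on $Dk_u$, $Dk_s$ and $Dr$ over the whole iteration set. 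A $C^1$ ball supplies no such bound, so your claim that feeding a $C^1$ perturbation through the series ``costs only an extra factor $j$'' is justified only for the $C^0$ part of the norm, not for the derivatives. The paper resolves exactly this by restricting to the set $\Gamma_1(\delta)$ of $C^2$ functions with $\supnorm{D^2\cdot}\le\delta$ (see \cref{EquationUniformSecondDerivativeBound}), proving its invariance in \cref{UniformSecondDerivativeBound} --- which in turn forces $\supnorm{D^2g},\supnorm{D^2k_c}\le\varepsilon$ to be small, arranged a posteriori by the rescaling $h^\varepsilon(x)=\varepsilon^{-1}h(\varepsilon x)$ of \cref{SecondDerivativeBounded} --- and only then contracting in the $C^1$ norm. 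Your argument needs both of these ingredients; without them the fixed point is not produced. A secondary point worth checking: convergence of your stable series in the supremum norm requires $\operatornorm{A_s}<1$, which does not follow verbatim from hypothesis \cref{MainTheoremNorm } unless $\operatornorm{A_c^{-1}}\ge 1$.
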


\begin{remark}
If $A$ has a spectral gap around the unit circle, conditions \cref{MainTheoremSubpaces ,MainTheoremOperators } hold when we take $X_c$ to be the center eigenspace, $X_u$ to be the unstable eigenspace and $X_s$ to be the stable eigenspace. Furthermore, in this case there exists a norm on $X$ such that condition \cref{MainTheoremNorm } holds. 
 For the construction of this norm, see Proposition A.1 of \cite{Cabre03}.
\end{remark}

In many practical situations, using for example numerics or Taylor approximations, one may be able to find $K{}{}_0$ and $R_0$ which almost solve \cref{ConjugacyEquation}, i.e. for which $F{}{} \circ K{}{}_0 - K{}{}_0 \circ R_0$ is small.  In other instances, the dynamical system $F{}{}: X \to X$ may be close to another dynamical system $F{}{}_0 :X \to X$, for which we are able to find a $K{}{}_0$ and $R_0$ satisfying $F{}{}_0 \circ K{}{}_0 - K{}{}_0 \circ R_0 = 0$ exactly. In both cases, we are interested in estimating the error between $K{}{}_0$ and the true conjugacy $K{}{}$, as well as the error between $R_0$ and the true dynamical system $R$. The following corollary of \cref{MainTheorem} gives bounds for these errors.

\begin{corollary}\label{AlmostSolution}
Let $n \ge 2$ and let $F{}{} :X \to X$ and $k_c : X_c \to X_c$ be $C^n$ functions which satisfy the conditions of \cref{MainTheorem}. Let $\varepsilon > 0$, $m<n$ and $M > 0$. Then there exists a constant $\mathcal{C}(M,m)$, which will be introduced in \cref{ProofofMainCorollary}, such that if
\begin{flalign*}
\text{1. } & k_0 \in \left\{ h \in C_b^{m+1}(X_c,X_u \oplus X_s ) \ \middle| \ h(0) = 0,\   Dh(0) = 0  \text{ and } \norm{h}_{m+1} \le M \right\}, &\\
\text{2. } & r_0 \in \left\{ h \in C_b^{m+1}(X_c,X_c ) \ \middle| \ h(0) = 0, \  Dh(0) = 0, \supnorm{Dh} \le L_r \text{ and } \norm{h}_{m+1} \le M \right\}, &\\
\text{3. } & \bignorm{F{}{} \circ \left( \iota +  \begin{pmatrix} k_c \\ k_0 \end{pmatrix} \right) - \left( \iota +  \begin{pmatrix} k_c \\ k_0\end{pmatrix} \right)  \circ \left( A_c + r_0 \right)}_m \le \varepsilon, &
\end{flalign*}
then we have
\begin{align*}
\norm{ k - k_0}_m \le C(M,m) \varepsilon &&\text{ and }&& \norm{ r - r_0}_m \le C(M,m) \varepsilon
\end{align*}
for $k = k_u \oplus k_s :X_c \to X_u \oplus X_s$ and $ r: X_c \to X_c$ from \cref{MainTheorem}.
\end{corollary}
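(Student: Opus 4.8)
The plan is to recast the corollary as a perturbation of the fixed point problem used to prove \cref{MainTheorem}, and then to rerun, in quantitative form, the two bootstrapping arguments that upgrade regularity, keeping track of how the constants depend on $\varepsilon$ and $M$. Write the true solution of \cref{ConjugacyEquation} from \cref{MainTheorem} as $K \isdef \iota + \left(\begin{smallmatrix} k_c \\ k\end{smallmatrix}\right)$ and $R \isdef A_c + r$, and set $\delta k \isdef k - k_0$, $\delta r \isdef r - r_0$. Since $k_c$ is common to $K$ and $K_0 \isdef \iota + \left(\begin{smallmatrix} k_c \\ k_0\end{smallmatrix}\right)$, we have $K - K_0 = \left(\begin{smallmatrix} 0 \\ \delta k\end{smallmatrix}\right)$ and $R - R_0 = \delta r$ with $R_0 \isdef A_c + r_0$. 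Let $E \isdef F \circ K_0 - K_0 \circ R_0$ be the residual; by hypothesis $\norm{E}_m \le \varepsilon$, and moreover $E \in C_b^{m+1}$ since $m+1 \le n$. Subtracting the identity defining $E$ from \cref{ConjugacyEquation} and splitting into $X_u$-, $X_s$- and $X_c$-components exactly as in \cref{C1Section} --- now with $(K_0,R_0,T_0)$ playing the role of the reference maps, where $T_0 \isdef R_0^{-1}$ exists because $\supnorm{Dr_0}\le L_r$ --- rewrites $(\delta k,\delta r)$ as the fixed point of an operator $\Psi_\varepsilon$ of the same shape as the one in \cref{C1Section}, whose coefficients are polynomial in the derivatives of $g$, $k_c$, $K_0$, $R_0$, $T_0$ and which carries an extra affine term built from $E$, hence of size $O(\varepsilon)$.

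First I would prove the base estimate in $C^1$. The norm inequalities and smallness assumptions of \cref{MainTheorem}, together with $\supnorm{Dr_0}\le L_r$, make $\Psi_\varepsilon$ a contraction in the $C^1$ norm on a ball $\{\norm{\delta k}_1 + \norm{\delta r}_1 \le C\varepsilon\}$: the contraction constant is estimated exactly as in \cref{C1Section}, while the radius of the invariant ball and the $C^1$ size of the affine part absorb a factor depending on $\norm{K_0}_2 \le M$ and $\norm{r_0}_2 \le M$. As $\Psi_\varepsilon(0,0) = O(\varepsilon)$, its fixed point satisfies $\norm{\delta k}_1 \le C_1(M)\varepsilon$ and $\norm{\delta r}_1\le C_1(M)\varepsilon$; that this fixed point coincides with $(k-k_0,\, r-r_0)$ and not some spurious solution follows from the uniqueness statement in \cref{UniquenessSection}.

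Next I would bootstrap from $C^1$ to $C^m$ by induction on $2\le j\le m$, mirroring \cref{C2ContinuitySection,CmContinuitySection}. Assuming $\norm{\delta k}_{j-1}, \norm{\delta r}_{j-1}\le C_{j-1}(M)\varepsilon$, differentiate \cref{ConjugacyEquation} and the approximate equation $j$ times and subtract, using the chain rule in the form \cref{ChainRuleExplained}. The top-order terms $D^j\delta k$ and $D^j\delta r$ then satisfy a linear fixed point equation whose homogeneous part is contractive by the norm inequalities of \cref{MainTheorem} with $\tilde{n}=j$, and whose inhomogeneous part collects: terms in $D^{\le j}E$, bounded by $\varepsilon$ because $j\le m$; terms in the derivatives up to order $j$ of $K_0$, $R_0$, $T_0$, bounded in terms of $M$ because $j\le m+1$; and terms in $D^{<j}\delta k$ and $D^{<j}\delta r$, bounded by $C_{j-1}(M)\varepsilon$ by the induction hypothesis. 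Solving this linear equation gives $\norm{D^j\delta k}_0, \norm{D^j\delta r}_0 \le C_j(M)\varepsilon$, hence $\norm{\delta k}_j, \norm{\delta r}_j \le C_j(M)\varepsilon$. Taking $j=m$ and setting $\mathcal{C}(M,m)\isdef C_m(M)$ completes the proof.

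I expect the main obstacle to be the bookkeeping in this last step: one has to organize the $j$-times differentiated conjugacy equation so that $D^j\delta k$ and $D^j\delta r$ enter only through the already-understood contractive operator and every remaining term is genuinely of lower order and carries an explicit factor $\varepsilon$. This is precisely where the estimates of \cref{C2ContinuitySection,CmContinuitySection} do the heavy lifting, and where the dependence of $\mathcal{C}(M,m)$ on $M$, on $m$, and on the fixed data ($L_g$, $L_c$ and the spectral gap of $A$) accumulates.
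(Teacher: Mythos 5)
Your proposal follows essentially the same route as the paper: convert the conjugacy residual into a residual for the fixed point operator of \cref{C1Section}, use the contraction property to bound the $C^0$ distance to the true fixed point, and then induct on the order of differentiation using the differentiated fixed point equations, in which the top-order derivative enters through the already-contractive derivative operators $\Theta^{[j+1]}$ while all remaining terms are of lower order, controlled by $M$ and the induction hypothesis, and carry a factor $\varepsilon$. The only caveat is that your base step should not claim a $C^1$ contraction on a ball defined by $C^1$ bounds alone (the $C^1$ contraction of \cref{C1Section} requires uniform second-derivative control); the paper instead runs the base case purely in $C^0$ and obtains the first-derivative estimate from the $C^0$ contraction of $\Theta^{[2]}$, which your induction scheme accommodates by simply starting at $j=1$.
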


\begin{remark}\label{SmallnessConditions} To state  what it means in \cref{MainTheorem} for $L_g$ and $L_c$ to be small enough, we first introduce explicit formulas for $L_r$, $L_t$, $L_u$ and $L_s$:
\begin{subequations}
\begin{align}
L_r &\isdef \frac{L_g + L_c \left( 2 \operatornorm{A_c} + L_g\right)}{1 - L_c}, \label{FirstDerivativeUnknownsR} \\
L_t &\isdef \frac{ \operatornorm{A_c^{-1}}^2 L_r}{1 - \operatornorm{A_c^{-1} } L_r }, \label{FirstDerivativeUnknownsT} \\
L_u &\isdef  \frac{\operatornorm{A_u^{-1} } \left( 1 + L_c \right) L_g}{1 - L_r \operatornorm{A_u^{-1} } - \operatornorm{A_c} \operatornorm{A_u^{-1}}}, \label{FirstDerivativeUnknownsU} \\
L_s &\isdef  \frac{\operatornorm{A_c^{-1}} \left( 1 + L_c \right)  L_g}{1 - L_r \operatornorm{A_c^{-1}} - \operatornorm{A_s }\operatornorm{ A_c^{-1} }} . \label{FirstDerivativeUnknownsS}
\end{align}
\end{subequations}
For $L_r$, $L_t$, $L_u$, and $L_s$ to be positive, their denominators have to be positive. So we will first of all require $L_g$ and $L_c$ to be so small that
\begin{align*}
\begin{aligned}
 L_c &< 1 , \\
 L_r \operatornorm{A_c^{-1}} &< 1 ,
\end{aligned} &&
\begin{aligned} 
 L_r \operatornorm{A_u^{-1}} + \operatornorm{A_c}\operatornorm{A_u^{-1}}  &< 1 ,\\ 
 L_r \operatornorm{A_c^{-1}} + \operatornorm{A_s} \operatornorm{A_c^{-1}} &< 1.
\end{aligned}
\end{align*}
Besides these conditions on $L_g$ and $L_c$, the proof of \cref{MainTheorem} contains multiple fixed points arguments, with contraction constants depending on $L_g$ and $L_c$. The corresponding contraction constants are, for $0 \le \tilde{n} \le n$,
\begin{subequations}
\begin{align}
\theta_{\tilde{n},1} &\isdef L_g + L_c, \label{ContractionConstants1} \\
\theta_{\tilde{n},2} &\isdef \operatornorm{A_u^{-1}} \left( \left( \operatornorm{A_c } + L_r \right)^{\tilde{n}} + L_g + L_u \right), \label{ContractionConstants2} \\
\theta_{\tilde{n},3} &\isdef L_{-1}^{\tilde{n}} \left( \operatornorm{A_s} \left( 1 + L_{-1}L_s \right) + L_g \left( 1 + L_{-1} \left( 1 + L_c \right) \right) \right). \label{ContractionConstants3} 
\end{align}
\end{subequations}
We note that $\theta_{\tilde{n},1}$ does not depend on $\tilde{n}$, but for consistency we choose to keep the index $\tilde{n}$. Furthermore, we define
\begin{align*}
L_{-1} \isdef  \operatornorm{A_c^{-1}} + L_t,
\end{align*}
which will act as a bound on $\supnorm{D(A_c + r)^{-1}}$. Now, if $L_g$ and $L_c$ are small, then we have that $\theta_{\tilde{n},i} < 1$. In particular, we require $L_g$ and $L_c$ to be so small that indeed
\begin{align*}
\theta_{\tilde{n},1} &<1, &
\theta_{\tilde{n},2} &<1, &
\theta_{\tilde{n},3} &<1  &\text{ for all } \tilde{n} \le n.
\end{align*}
We will often refer to this remark by writing ``(\dots) small in the sense of \cref{SmallnessConditions} for $n=m$.''. By this, we mean  that $\theta_{\tilde{n},i} < 1$ holds for all $\tilde{n} \le m$ and $i=1,2,3$.
\end{remark}

\subsection{Proof scheme of \texorpdfstring{\cref{MainTheorem}}{the main theorem}}\label{ProofScheme}

To prove \cref{MainTheorem} we use four steps:
\begin{enumerate}
\item We define a fixed point operator and show that its fixed points are solutions to the conjugacy equation \cref{ConjugacyEquation}.
\item We show that the fixed point operator is a contraction with respect to the $C^1$ norm on an appropriate set of $C^2$ functions. Therefore, we find a pair of $C^1$ functions $(K{}{},r)$ such that equation \cref{ConjugacyEquation} holds.
\item We show that the $C^1$ solution $(K{}{},r)$ of equation \cref{ConjugacyEquation} is in fact $C^2$.
\item Finally, we use induction to prove that  $K{}{}$ and $r$ are $C^n$.
\end{enumerate}

We find it easiest to split the proof in the four steps outlined above. We note that in step 2 we find a contraction with respect to the $C^1$ norm in a space of $C^2$ functions, which is the reason that we assume $n \ge 2$ in \cref{MainTheorem}. Furthermore, going from a $C^1$ solution to a $C^2$ solution needs different estimates than when going from a $C^m$ solution to a $C^{m+1}$ solution for $m \ge 2$, see for instance \cref{DifferenceC2Norm} and \cref{DifferenceCmNorm}.

\section{A \texorpdfstring{$C^1$}{C1} center manifold}\label{C1Section}
\subsection{A fixed point operator}\label{FixedPointSection}

We start by setting up a fixed point operator. For this, we consider the conjugacy equation
\begin{align*}
(A +g ) \circ K{}{} - K{}{} \circ (A_c + r) = 0.
\end{align*}
When we write $g = \tmatrix{g_c}{g_u}{g_s}$ and $K{}{} = \iota + \tmatrix{ k_c}{k_u}{k_s}$, then we have component-wise
\begin{align*}
\begin{pmatrix} A_c & 0 & 0 \\ 0 & A_u & 0 \\ 0 & 0 & A_s \end{pmatrix}  \begin{pmatrix} \operatorname{Id} + k_c \\ k_u \\ k_s \end{pmatrix} + \begin{pmatrix} g_c \circ K{}{} \\ g_u \circ K{}{} \\ g_s \circ K{}{} \end{pmatrix} - \begin{pmatrix} A_c + r + k_c \circ (A_c + r) \\ k_u \circ (A_c + r) \\ k_s \circ (A_c +r) \end{pmatrix} &= 0 .
\end{align*}
In other words, we obtain the three equations
\begin{align}
\left\{  \begin{matrix*}[r]  A_c k_c +g_c \circ K{}{} - r - k_c \circ ( A_c +r)  = 0,			 \\
			A_u k_u + g_u \circ K{}{} - k_u \circ (A_c + r) = 0, \\
			A_s k_s + g_s \circ K{}{} - k_s \circ (A_c + r) = 0.\end{matrix*}	  \right. \label{FixedPointOperatorEq1}
\end{align}
The first equation is equivalent to
\begin{align*}
r = A_c k_c + g_c \circ K{}{} - k_c \circ (A_c + r).
\end{align*}
Secondly, as $A_u$ is invertible, the second equation is equivalent to
\begin{align*}
k_u = A_u^{-1} \left( A_u k_u \right)  = A_u^{-1} \left( k_u \circ (A_c + r) - g_u \circ K{}{} \right).
\end{align*}
Finally, if we assume that $A_c + r$ is invertible, the last equation is equivalent to
\begin{align*}
k_s = A_s k_s \circ (A_c +r)^{-1} + g_s \circ K{}{} \circ (A_c + r)^{-1}.
\end{align*}
We see that the system \cref{FixedPointOperatorEq1} is equivalent with
\begin{align}\left\{
\begin{aligned}	 r &= A_c k_c +g_c \circ K{}{} -k_c \circ (A_c +r),  \\
			k_u &=A_u^{-1}k_u \circ (A_c + r)  - A_u^{-1} g_u \circ K{}{}, \\
			k_s &= A_s k_s\circ (A_c + r)^{-1} + g_s \circ K{}{} \circ (A_c +r)^{-1}.  \end{aligned} \right.
			\label{FixedPointOperatorEquation}
\end{align}
Thus $\tmatrix{r}{k_u}{k_s}$ is a fixed point of the operator $\Theta$, defined by
\begin{align}
\Theta^{} : \begin{pmatrix} r \\ k_u \\ k_s \end{pmatrix} \mapsto \begin{pmatrix*}[l]
	A_c k_c +g_c \circ K{}{} -k_c \circ (A_c +r)  \\
	A_u^{-1}k_u \circ (A_c + r)  - A_u^{-1} g_u \circ K{}{} \\
	 A_s k_s\circ (A_c + r)^{-1} + g_s \circ K{}{} \circ (A_c +r)^{-1}
\end{pmatrix*}.
\label{FixedPointOperator}
\end{align}
The following proposition summarizes this derivation.

\begin{proposition}\label{FixedPointOperatorProposition}
Let $K{}{} = \iota + \tmatrix{ k_c}{k_u}{k_s}: X_c \to X$ and $A_c + r : X_c \to X_c$. If $A_c + r$ is globally invertible, then the following are equivalent:
\begin{enumerate}[label = \roman*)]
\item The dynamical system $F{}{}$ is conjugate to $A_c + r$ by $K{}{}$.
\item The function $\tmatrix{r}{k_u}{k_s}: X_c \to X$ satisfies the system \cref{FixedPointOperatorEquation}.
\item The function $\tmatrix{r}{k_u}{k_s} :X_c \to X$ is a fixed point of $\Theta$ defined in \cref{FixedPointOperator}.
\end{enumerate}
\end{proposition}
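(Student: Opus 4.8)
The plan is to follow the componentwise derivation carried out just above the proposition, checking at each step that the manipulations performed are genuine equivalences and not merely implications. Throughout, $K = \iota + \tmatrix{k_c}{k_u}{k_s}$ and $A_c+r$ are as in the statement, with $A_c+r$ assumed globally invertible.

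First I would record that (i) holds, by definition of conjugacy, precisely when $(A+g)\circ K = K \circ (A_c+r)$ as an identity of maps $X_c \to X$, i.e. \cref{ConjugacyEquation}. Writing $g = \tmatrix{g_c}{g_u}{g_s}$ and using the block-diagonal form of $A$, this single vector equation is equivalent to the system \cref{FixedPointOperatorEq1} of three equations, one valued in each of the factors $X_c$, $X_u$, $X_s$; this uses only the elementary fact that a map into a direct sum vanishes if and only if each of its components vanishes, together with \cref{DesiredPropertyNorm} being irrelevant here — it is purely algebraic.

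Next I would treat the three equations of \cref{FixedPointOperatorEq1} one at a time. The first, $A_c k_c + g_c\circ K - r - k_c\circ(A_c+r) = 0$, contains $r$ linearly in one explicit occurrence and can simply be solved for that occurrence, giving the first line of \cref{FixedPointOperatorEquation}; this rearrangement is reversible by inspection and needs no hypothesis. For the second equation I would move $k_u \circ (A_c+r)$ and $g_u\circ K$ to the right-hand side and apply $A_u^{-1}$ on the left; since $A_u$ is invertible by assumption, the map $h \mapsto A_u h$ is a bijection of the relevant function space, so $A_u h = v \iff h = A_u^{-1} v$, and this yields the second line of \cref{FixedPointOperatorEquation} as an equivalence. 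For the third equation I would isolate $k_s\circ(A_c+r)$ and compose on the right with $(A_c+r)^{-1}$; here the standing hypothesis that $A_c+r$ is globally invertible is exactly what makes this step legitimate and reversible, because precomposition with a bijection of $X_c$ is an invertible operation on maps $X_c \to X_s$. This produces the third line of \cref{FixedPointOperatorEquation}. Combining the three, \cref{FixedPointOperatorEq1} $\iff$ \cref{FixedPointOperatorEquation}, which gives (i) $\iff$ (ii).

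Finally, (ii) $\iff$ (iii) is immediate from the definition of $\Theta$: the right-hand side of \cref{FixedPointOperatorEquation} is literally the value $\Theta\tmatrix{r}{k_u}{k_s}$ from \cref{FixedPointOperator}, so the system \cref{FixedPointOperatorEquation} asserts exactly that $\tmatrix{r}{k_u}{k_s}$ is a fixed point of $\Theta$. There is no substantial obstacle in this argument; the only point that genuinely requires care — and the reason the global invertibility of $A_c+r$ appears as a hypothesis — is the third equation, where one must invoke that invertibility both to make sense of composing with $(A_c+r)^{-1}$ and to guarantee that the resulting equivalence runs in both directions.
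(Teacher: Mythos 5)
Your proposal is correct and follows exactly the paper's approach: the paper's proof consists of the single line ``This follows from the derivation above,'' and that derivation is precisely the componentwise rearrangement you carry out, with your added care about the reversibility of applying $A_u^{-1}$ and precomposing with $(A_c+r)^{-1}$. Nothing is missing.
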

\begin{proof} This follows from the derivation above.
\end{proof}

\subsection{\texorpdfstring{$\Theta$}{Theta} is well-defined}\label{NormInverse}

One of the conditions of \cref{FixedPointOperatorProposition} is that $A_c + r$ is globally invertible. Under a mild condition on the derivative of $r$, we show that $A_c + r$ is a global diffeomorphism. We will state a more general result, as we need a more general statement in \cref{LuckManifoldIndifferent}. Furthermore, we note that the result also follows from theorem 2.1 of \cite{Plastock74}, which states that under some growth condition on $A_c + r$ and the inverse of its derivative, $A_c +r $ is a global diffeomorphism. However, we will provide an alternative proof, as this proof is exemplary for the structure of the proof of \cref{MainTheorem}.

\begin{lemma}\label{GlobalDiffeomorphism} Let $B \in \mathcal{L}(X_c,X_c)$ be invertible. 
\begin{enumerate}[label = {\roman*)}, ref={\cref{GlobalDiffeomorphism}\roman*)}]
\item\label{GlobalDiffeomorphism1} We have that $B + \psi :X_c \to X_c$ is a global diffeomorphism for all $\psi \in C_b^1(X_c,X_c)$ with  $\supnorm{D\psi} < \operatornorm{B^{-1}}^{-1}$.
\item\label{GlobalDiffeomorphism2}  If $B+ \psi :X_c \to X_c$ is a homeomorphism and $\psi \in C_b^0(X_c, X_c)$, then $(B+ \psi)^{-1} = B^{-1} + \varphi$ with $\varphi \in C_b^0(X_c, X_c)$.
\end{enumerate}
\end{lemma}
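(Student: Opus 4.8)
The plan is to prove part i) by a Banach fixed point argument and part ii) by an elementary bookkeeping argument once invertibility is in hand. For part i), I would first establish that $B+\psi$ is a local diffeomorphism everywhere: since $\supnorm{D\psi} < \operatornorm{B^{-1}}^{-1}$, the derivative $D(B+\psi)(x) = B + D\psi(x) = B\bigl(\operatorname{Id} + B^{-1}D\psi(x)\bigr)$ has $\operatornorm{B^{-1}D\psi(x)} < 1$, so it is invertible by the Neumann series, and the inverse function theorem gives a local $C^1$ inverse. It remains to show $B+\psi$ is a bijection. For surjectivity and injectivity simultaneously, fix $y \in X_c$ and consider the map $\Phi_y : X_c \to X_c$ defined by $\Phi_y(x) \isdef B^{-1}\bigl(y - \psi(x)\bigr) = B^{-1}y - B^{-1}\psi(x)$. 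A point $x$ satisfies $(B+\psi)(x) = y$ if and only if $\Phi_y(x) = x$. Now $\Phi_y$ is a contraction: for $x_1,x_2 \in X_c$, the mean value inequality gives $\norm{\Phi_y(x_1) - \Phi_y(x_2)} = \norm{B^{-1}(\psi(x_1) - \psi(x_2))} \le \operatornorm{B^{-1}}\supnorm{D\psi}\norm{x_1-x_2}$, and $\operatornorm{B^{-1}}\supnorm{D\psi} < 1$ by hypothesis. Since $X_c$ is a Banach space (hence complete), the Banach fixed point theorem yields a unique fixed point $x = x(y)$, so $B+\psi$ is a bijection. Combined with the local-diffeomorphism property, the global inverse is $C^1$, so $B+\psi$ is a global diffeomorphism.

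For part ii), assume $B+\psi$ is a homeomorphism with $\psi \in C_b^0$. Write $(B+\psi)^{-1} = B^{-1} + \varphi$, which defines $\varphi \isdef (B+\psi)^{-1} - B^{-1}$; this is continuous as a difference of continuous maps, so it only remains to check $\varphi \in C_b^0$, i.e. $\supnorm{\varphi} < \infty$. Applying $B+\psi$ to $(B+\psi)^{-1}(y) = B^{-1}y + \varphi(y)$ gives $y = B\bigl(B^{-1}y + \varphi(y)\bigr) + \psi\bigl(B^{-1}y + \varphi(y)\bigr) = y + B\varphi(y) + \psi\bigl((B+\psi)^{-1}(y)\bigr)$, hence $B\varphi(y) = -\psi\bigl((B+\psi)^{-1}(y)\bigr)$, so $\varphi(y) = -B^{-1}\psi\bigl((B+\psi)^{-1}(y)\bigr)$ and therefore $\supnorm{\varphi} \le \operatornorm{B^{-1}}\supnorm{\psi} < \infty$. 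This proves $\varphi \in C_b^0(X_c,X_c)$.

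I do not expect a serious obstacle here; the statement is standard. The only mild subtlety is keeping the two hypotheses of part i) separate — the contraction estimate for global bijectivity and the Neumann-series estimate for pointwise invertibility of the derivative both follow from the single inequality $\supnorm{D\psi} < \operatornorm{B^{-1}}^{-1}$, but one should be careful to invoke the mean value inequality for Banach-space-valued maps (valid since the segment between any two points lies in the convex set $X_c$) rather than a one-dimensional mean value theorem. In part ii) the only point to note is that no differentiability of $\psi$ is assumed, so the argument must be purely at the level of continuous maps and cannot use the inverse function theorem; the displayed algebraic identity for $\varphi$ sidesteps this cleanly.
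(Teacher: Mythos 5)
Your proof is correct, but for part i) it takes a genuinely different route from the paper. You establish bijectivity by a \emph{pointwise} Banach fixed point argument (for each $y$, the map $x \mapsto B^{-1}(y-\psi(x))$ is a contraction on $X_c$) and then obtain smoothness of the inverse from the classical Inverse Function Theorem via the Neumann series. The paper instead works entirely on function spaces: it finds the right inverse $B^{-1}+\varphi$ as the fixed point of a contraction $\Psi$ on $C^0_b(X_c,X_c)$, then shows $\varphi$ is $C^1$ by setting up a \emph{second} contraction $\Phi$ on $C^0_b(X_c,\mathcal{L}(X_c,X_c))$ whose fixed point is verified (by an integral estimate and dominated convergence) to be $D\varphi$, and finally checks injectivity by a direct norm estimate. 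The paper explicitly chooses this longer route because it is ``exemplary for the structure of the proof of \cref{MainTheorem}'' --- the same pattern of a $C^0$ contraction followed by a bootstrapping contraction for the derivative recurs in \cref{C1Continuity,C2Continuity,CmContinuity}. Your argument is shorter and more standard, and buys the same conclusion with less machinery; what it does not buy is the pedagogical preview of the paper's main technique. For part ii) your argument is essentially identical to the paper's: both reduce to the identity $\varphi = -B^{-1}\,\psi\circ(B+\psi)^{-1}$ (the paper obtains it by precomposing with $B+\psi$), giving $\supnorm{\varphi}\le\operatornorm{B^{-1}}\supnorm{\psi}$.
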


\begin{proof}\textit{i)} We will first show that $B + \psi$ is globally invertible, and use its inverse to prove that it is in fact a diffeomorphism, which is similar to how we show the smoothness result of \cref{MainTheorem}.

For $\varphi \in C_b^0(X_c,X_c)$, we have $(B + \psi) \circ  (B^{-1} + \varphi) = \operatorname{Id}$ if and only if 
\begin{align}
\varphi &= - B^{-1} \left(  \psi \circ \left( B^{-1} + \varphi \right) \right)  \isfed \Psi(\varphi). \label{GlobalDiffeoEq1} 
\end{align}
Our first goal is to show that $\Psi$ is a contraction on $C^0_b(X_c, X_c)$.

As $\psi$ is $C^1$ with bounded derivative, it is Lipschitz with Lipschitz constant equal to the norm of its derivative. Thus for $\varphi,\tilde{\varphi}\in C^0_b(X_c,X_c)$ we have 
\begin{align*}
\supnorm{\Psi(\varphi) - \Psi(\tilde{\varphi})} \le \operatornorm{B^{-1}} \supnorm{D\psi} \supnorm{\varphi - \tilde{\varphi}}.
\end{align*}
Thus with $\supnorm{D\psi} < \operatornorm{B^{-1}}^{-1}$ we have that $\Psi$ is contraction operator. Let $\varphi$ denote the fixed point of $\Psi$.

If $\varphi$ is $C^1$, we can differentiate \cref{GlobalDiffeoEq1} to $x$, and the derivative of $\varphi$ satisfies
\begin{align}
D\varphi = -B^{-1} D\psi(B^{-1} + \varphi)  (B^{-1} + D\varphi). \label{GlobalDiffeoEq3}
\end{align}
In other words, if $\varphi$ is $C^1$, then $D\varphi$ is a fixed point of the operator
\begin{align*}
\Phi:&\  C^0_b(X_c, \mathcal{L}(X_c,X_c)) \to C^0_b(X_c,\mathcal{L}(X_c,X_c)),  \\
	&\ A \mapsto - B^{-1} D\psi(B^{-1} + \varphi)  B^{-1} - B^{-1}D\psi(B^{-1} + \varphi)  A. 
\end{align*}
We will show that $\Phi$ is a contraction, and that its fixed point is indeed the derivative of $\varphi$.

As $\supnorm{B^{-1}D\psi(B^{-1} + \varphi)} \le \operatornorm{B} \supnorm{D\psi} < 1$, we see that $\Phi$ is contraction, and we denote the fixed point of $\Phi$ by $\mathcal{A}$. All that is left to show is that $\mathcal{A}$ is the derivative of $\varphi$. We have
\begin{align*}
\psi (B^{-1}(x+y) + \varphi(x+y)) &-\psi(B^{-1}x-\varphi(x)) \nonumber \\
				&= \int_0^1 D\psi(z(s,x,y)) \text{d}s (B^{-1}y + \varphi(x+y) - \varphi(y)),
\end{align*}
where we define $z(s,x,y)  \isdef  B^{-1} x + sB^{-1}y + s \varphi(x+y) + (1-s)\varphi(x)$. We find, as $\phi$ is the fixed point of $\Psi$,
\begin{align}
\norm{\varphi(x+y) &- \varphi(y) - \mathcal{A}(x)y}  \nonumber \\
			&\le\operatornorm{B^{-1}} \int_0^1 \operatornorm{ D\psi(z(s,x,y)) }\text{d}s \norm{(\varphi(x+y) - \varphi(x) -\mathcal{A}(x)y)  }  \nonumber \\
			&\quad + \operatornorm{B^{-1}} \int_0^1 \operatornorm{D\psi(z(s,x,y)) - D\psi(z(s,x,0))} \text{d}s  \norm{B^{-1}y}  \nonumber \\
			&\quad + \operatornorm{B^{-1}} \int_0^1 \operatornorm{D\psi(z(s,x,y)) - D\psi(z(s,x,0))} \text{d}s \norm{ \mathcal{A}(x)y }.  \label{GlobalDiffeoEq2}
\end{align}
Define $\theta = \operatornorm{B^{-1}} \int_0^1 \norm{ D\psi(z(s,x,y)) }\text{d}s$, then $\theta \le \operatornorm{B^{-1}} \supnorm{D\psi} < 1$. Hence \cref{GlobalDiffeoEq2} implies
\begin{align*}
&\frac{\norm{\varphi(x+y) - \varphi(y) - \mathcal{A}(x)y}}{\norm{y}} \\
&\quad \quad \le \frac{\operatornorm{B^{-1}}}{1-\theta} \int_0^1 \operatornorm{D\psi(z(s,x,y)) - D\psi(z(s,x,0))} \text{d}s  ( \operatornorm{B^{-1}} + \operatornorm{\mathcal{A}(x)}).
\end{align*}
Furthermore, $\supnorm{D\psi}$ is finite, hence by using the Lebesgue Dominated Convergence Theorem, we can interchange limits and integrals in
\begin{align*}
\lim_{\norm{y} \to 0} \int_0^1 \operatornorm{D\psi(z(s,x,y)) - D\psi(z(s,x,0))} \text{d}s.
\end{align*}
Using continuity of $\operatornorm{D\psi(z(s,x,y))}$ in $y$, the limit is $0$. Therefore, 
\begin{align*}
\lim_{\norm{y} \to 0} \frac{\norm{\varphi(x+y) - \varphi(y) - \mathcal{A}(x)y}}{\norm{y}} = 0
\end{align*}
and we conclude that the right inverse $B^{-1} + \varphi$ of $B+\psi$ is $C^1$.

We still need to show that $B^{-1} + \varphi$ is the inverse of $B + \psi$, or equivalently show that $B+ \psi$ is injective. Let $x,y \in X_c$ be such that $Bx + \psi(x) = By + \psi(y)$, then
\begin{align*}
0 = \norm{Bx + \psi(x) - By - \psi(y)} \ge \operatornorm{B^{-1}}^{-1} \norm{x-y} - \supnorm{D\psi} \norm{x-y}.
\end{align*}
As $\operatornorm{B^{-1}}^{-1} - \supnorm{D\psi} > 0$, we must have $\norm{x-y} = 0$, and thus $x=y$. With this, we have shown that $B + \psi$ is injective, and thus invertible.

\textit{ii)} We want to show that for all bounded continuous $\psi : X_c \to X_c$ such that $B+ \psi$ is a homeomorphism, $\varphi \isdef B^{-1} - (B+ \psi)^{-1}$ is uniformly bounded. We have
\begin{align*}
\sup_{x \in X_c} \norm{ \varphi(x)} &= \sup_{x \in X_c} \norm{ \left( B^{-1} - (B+ \psi)^{-1}\right)(x)} \\
						&= \sup_{x \in X_c} \norm{ \left(B^{-1} -(B+\psi)^{-1} \right) \left( \left( B+ \psi \right)(x) \right)} \\
						&= \sup_{x \in X_c} \norm{ x + B^{-1} \psi(x) - x} \\
						&\le \sup_{x \in X_c} \operatornorm{B^{-1}} \norm{\psi(x)} \\
						&=\operatornorm{B^{-1}} \supnorm{\psi}. \qedhere
\end{align*}
\end{proof}
From the above lemma it follows that if $r : X_c \to X_c$ is $C^1$ and its derivative $Dr$ is uniformly bounded by $\operatornorm{A_c^{-1}}^{-1}$, then there exists a $C^1$  bounded function $t :X_c \to X_c$ such that $(A_c + r)^{-1} = A_c^{-1} + t$. Furthermore, from equation \cref{GlobalDiffeoEq3} it follows that $\supnorm{Dt}$ satisfies
\begin{align*}
\supnorm{Dt} \le \supnorm{ A_c^{-1}} \supnorm{Dr} \supnorm{A_c^{-1}} + \supnorm{ A_c^{-1}} \supnorm{Dr} \supnorm{Dt}.
\end{align*}
In particular, we claim in \cref{MainTheorem} that the dynamical system $A_c + r$ will be invertible with inverse $A_c^{-1} + t$. Furthermore, we claim that we have the bounds $\supnorm{Dr} \le L_r$ and $\supnorm{Dt} \le L_t$, with $L_r$ and $L_t$ defined in \cref{FirstDerivativeUnknownsR} and \cref{FirstDerivativeUnknownsT} respectively. Since $L_r \le \operatornorm{A_c^{-1}}^{-1}$, it follows from the lemma that if $\supnorm{Dr} \le L_r$, we have that $A_c + r$ is invertible, and from the definition of $L_t$ that $\supnorm{Dt} \le L_t$. In particular, $\Theta$ is well-defined if we assume $\supnorm{Dr} \le L_r$ and the desired properties of $R = A_c + r$ follow from the above discussion.

\subsection{A first invariant set for \texorpdfstring{$\Theta$}{theta}}

We want to find an invariant set for $\Theta^{}$ by putting additional bounds on $\supnorm{Dr}$, $\supnorm{Dk_u}$ and $\supnorm{Dk_s}$. To this end, we define 
\begin{align*}
\Gamma_0 \isdef \left\{ \Lambda =  \begin{pmatrix} r \\ k_u \\ k_s \end{pmatrix} \in C^1_b(X_c, X)  \middle\vert \  \begin{matrix*}[l] 
\Lambda(0) = 0, \ D\Lambda(0) = 0 \vspace{3 pt} \\
\qquad \text{ and }\left\{ \begin{matrix*}[l]
\ \supnorm{Dr} \le L_r  \\
\supnorm{Dk_u} \le  L_u \\
\supnorm{Dk_s} \le  L_s   \end{matrix*}\right. 
\end{matrix*}\right\}.
\end{align*}

\begin{theorem}\label{LemmaBanachSpace}
Assume that $L_g$ and $L_c$ are small in the sense of \cref{SmallnessConditions} for $n=2$. Then $\Theta^{}$ is well-defined on $\Gamma_0$. Furthermore, $\Gamma_0$ is non-empty, invariant under $\Theta^{}$, and closed.
\end{theorem}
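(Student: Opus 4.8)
Here is how I would attack \cref{LemmaBanachSpace}. The statement bundles four assertions — that $\Theta$ is well-defined on $\Gamma_0$, that $\Gamma_0$ is non-empty, that it is closed, and that it is $\Theta$-invariant — and I would dispatch the first three quickly, leaving invariance as the only real work. For \emph{well-definedness}, the sole point to check is that $(A_c+r)^{-1}$ exists for $\Lambda=\left(\substack{r\\ k_u\\ k_s}\right)\in\Gamma_0$. Since $\supnorm{Dr}\le L_r$ and the smallness assumptions of \cref{SmallnessConditions} include $L_r\operatornorm{A_c^{-1}}<1$, \cref{GlobalDiffeomorphism} with $B=A_c$ and $\psi=r$ gives that $A_c+r$ is a global $C^1$ diffeomorphism with inverse $A_c^{-1}+t$, $t\in C^1_b(X_c,X_c)$; differentiating $(A_c+r)\circ(A_c^{-1}+t)=\operatorname{Id}$ yields $\supnorm{Dt}\le L_t$, hence $\supnorm{D(A_c+r)^{-1}}\le \operatornorm{A_c^{-1}}+L_t = L_{-1}$ with $L_{-1}=\operatornorm{A_c^{-1}}/(1-\operatornorm{A_c^{-1}}L_r)$ by \cref{FirstDerivativeUnknownsT}. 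Each component of $\Theta(\Lambda)$ in \cref{FixedPointOperator} is then a finite sum of compositions of the $C^n_b$ maps $g,k_c$, the bounded linear maps $A_c,A_u^{-1},A_s$, the $C^1_b$ maps $r,k_u,k_s$ and the $C^1$ map $(A_c+r)^{-1}$, hence $C^1$; and it is sup-norm bounded because $g$ and $k_c$ are. So $\Theta(\Lambda)\in C^1_b(X_c,X)$.

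For \emph{non-emptiness}, the zero map lies in $\Gamma_0$ (one needs only $L_r,L_u,L_s\ge 0$, which holds since their numerators are nonnegative and their denominators positive under \cref{SmallnessConditions}). For \emph{closedness}, $\Gamma_0$ is the intersection inside $C^1_b(X_c,X)$ of the sets where $\Lambda(0)=0$, $D\Lambda(0)=0$, $\supnorm{Dr}\le L_r$, $\supnorm{Dk_u}\le L_u$ and $\supnorm{Dk_s}\le L_s$; the maps $\Lambda\mapsto\Lambda(0)$ and $\Lambda\mapsto D\Lambda(0)$ are continuous and linear, and $\Lambda\mapsto\supnorm{D(\cdot)}$ is continuous, so all five sets — and hence $\Gamma_0$ — are closed.

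For \emph{invariance} I would take $\Lambda=\left(\substack{r\\ k_u\\ k_s}\right)\in\Gamma_0$, write $\Theta(\Lambda)=\left(\substack{\tilde r\\ \tilde k_u\\ \tilde k_s}\right)$, and set $K=\iota+\left(\substack{k_c\\ k_u\\ k_s}\right)$, $S=(A_c+r)^{-1}$. Since $k_c,k_u,k_s,r,g$ vanish at $0$ together with their first derivatives, $K(0)=0$, $DK(0)=\iota$, $(A_c+r)(0)=0$, $D(A_c+r)(0)=A_c$, and by the inverse function theorem $S(0)=0$, $DS(0)=A_c^{-1}$; substituting into \cref{FixedPointOperator} gives $\Theta(\Lambda)(0)=0$, and differentiating \cref{FixedPointOperator} once via \cref{ChainRuleExplained} and evaluating at $0$ gives $D\Theta(\Lambda)(0)=0$ because every term carries a factor $Dk_c(0)$, $Dg(0)$, $Dk_u(0)$ or $Dk_s(0)$. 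For the derivative bounds I would differentiate each component of \cref{FixedPointOperator}, take operator norms, and use $\supnorm{Dg_c},\supnorm{Dg_u},\supnorm{Dg_s}\le\supnorm{Dg}\le L_g$ (the norm on $X$ being the maximum of the block norms), $\supnorm{Dk_c}\le L_c$, $\supnorm{DK}\le\max\{1+L_c,L_u,L_s\}=1+L_c$ (the equality valid since the smallness conditions force $L_u,L_s\le 1$), $\supnorm{D(A_c+r)}\le\operatornorm{A_c}+L_r$ and $\supnorm{DS}\le L_{-1}$. This gives
\begin{align*}
\supnorm{D\tilde r} &\le 2\operatornorm{A_c}L_c + L_g(1+L_c) + L_cL_r, \\
\supnorm{D\tilde k_u} &\le \operatornorm{A_u^{-1}}\bigl(L_u(\operatornorm{A_c}+L_r)+L_g(1+L_c)\bigr), \\
\supnorm{D\tilde k_s} &\le L_{-1}\bigl(\operatornorm{A_s}L_s+L_g(1+L_c)\bigr),
\end{align*}
and rearranging each of these shows the right-hand sides equal exactly $L_r$, $L_u$ and $L_s$: these three equalities are precisely the fixed-point identities built into the definitions \cref{FirstDerivativeUnknownsR,FirstDerivativeUnknownsU,FirstDerivativeUnknownsS}, using $L_{-1}=\operatornorm{A_c^{-1}}/(1-\operatornorm{A_c^{-1}}L_r)$. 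Hence $\Theta(\Lambda)\in\Gamma_0$.

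The main obstacle is the derivative-bound step of the invariance argument: one must apply the chain rule carefully to the compositions involving $(A_c+r)^{-1}$ and correctly bound the operator norms of the block components of $Dg$ and of $DK$. Everything else is formal, and once the right quantities are assembled the needed inequalities hold with equality by the very choice of $L_r$, $L_u$, $L_s$, $L_t$ and $L_{-1}$, so no smallness beyond that recorded in \cref{SmallnessConditions} is required.
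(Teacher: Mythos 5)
Your proposal is correct and follows essentially the same route as the paper: non-emptiness via the zero map, closedness from the definition, well-definedness via \cref{GlobalDiffeomorphism} applied to $A_c+r$, and invariance by component-wise estimates whose right-hand sides collapse to $L_r$, $L_u$, $L_s$ exactly because of the defining formulas \cref{FirstDerivativeUnknownsR,FirstDerivativeUnknownsU,FirstDerivativeUnknownsS}. One minor imprecision: to get $\supnorm{DK}\le 1+L_c$ you claim the smallness conditions force $L_u,L_s\le 1$, whereas what actually follows (and suffices) is $L_u,L_s\le 1+L_c$, obtained as in the paper from $\theta_{1,2},\theta_{1,3}<1$ together with $L_u,L_s\ge 0$.
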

\begin{proof}
We first note that by \cref{SmallnessConditions} it holds that $0 \in \Gamma_0$ as $L_r$, $L_u$ and $L_s$ are all positive. Furthermore, from \cref{SmallnessConditions} it follows that $L_r < \operatornorm{A_c^{-1}}^{-1}$. By \cref{NormInverse}, we thus have that $A_c +r$ is a global diffeomorphism for $\tmatrix{r}{k_u}{k_s} \in \Gamma_0$, hence $\Theta^{}$ is well defined on $\Gamma_0$. Finally, it follows directly from  the definition of $\Gamma_0$ that $\Gamma_0$ is closed. All that remains to prove is that $\Gamma_0$ is invariant under $\Theta^{}$.

Let $\Lambda = \tmatrix{r}{k_u}{k_s} \in \Gamma_0$. Then we must prove that $\Theta^{}(\Lambda) = \tmatrix{\Theta^{}_1(\Lambda)}{\Theta^{}_2(\Lambda)}{\Theta^{}_3(\Lambda)} \in \Gamma_0$. We will prove the resulting conditions on $\Theta^{}(\Lambda)$ component-wise. We start by showing $\Theta^{}(\Lambda)(0)=0$. We have
\begin{align*}
\Theta^{}_1 (\Lambda)(0) &=	A_c k_c(0) +g_c(K{}{}(0)) -k_c((A_c +r)(0)).
\end{align*}
With $\Lambda \in \Gamma_0$, we have $r(0) = 0$, thus $(A_c + r)(0) = 0$. So we get
\begin{align*}
\Theta^{}_1 (\Lambda)(0) &=	A_c k_c(0) +g_c(K{}{}(0)) -k_c(0).
\end{align*}
By assumption \cref{MainTheoremKnownBounds } of \cref{MainTheorem} we have $k_c(0) = 0$. Together with $k_u(0) = k_s(0) =0$ we find $K{}{}(0) = \iota(0) +  \tmatrix{ k_c}{k_u}{k_s}(0) = 0$. Therefore
\begin{align*}
\Theta^{}_1 (\Lambda)(0) &=g_c(0).
\end{align*}
Finally, again by assumption \cref{MainTheoremKnownBounds } of \cref{MainTheorem}, we have $g_c(0) = 0$ and we conclude that
\begin{align*}
\Theta^{}_1(\Lambda)(0) = 0.
\end{align*}
 Similarly, $\Theta^{}_2(\Lambda)(0) = 0$ and $\Theta^{}_3(\Lambda)(0) = 0$. For the latter equality, we note that $A_c + r$ is a diffeomorphism and $(A_c + r)(0) = 0$, thus $(A_c + r)^{-1}(0) = 0$. So we find that $\Theta^{}(\Lambda)(0) = 0$.
 
 Likewise, we show that $D[\Theta^{}(\Lambda)](0) = 0$ component-wise.  For example, we have
\begin{align*}
D[\Theta^{}_1(\Lambda)](0) &= A_c D k_c(0) + Dg_c(K{}{}(0))DK{}{}(0) -Dk_c((A_c + r)(0))D(A_c + r)(0) \\
				&=A_c Dk_c(0) + Dg_c(0)DK{}{}(0) - Dk_c(0)(A_c + Dr)(0).
\end{align*}
By assumption \cref{MainTheoremKnownBounds } of \cref{MainTheorem} we have $Dg_c(0) = 0$ and $Dk_c(0) = 0$. So we find $D[\Theta^{}_1(\Lambda)](0)  = 0$. Similarly, we find $D[\Theta^{}_2(\Lambda)](0)= 0$ and $D[\Theta^{}_3(\Lambda)](0) = 0$. So we see that $D[\Theta^{}(\Lambda)](0) = 0$. 

For the bounds on the derivatives of the components of $\Theta^{}(\Lambda)$, we use the shorthand notation $R = A_c + r$. For the first component we have
\begin{align}
\supnorm{D[ \Theta^{}_1(\Lambda)]} &\le \supnorm{D[A_c k_c]} + \supnorm{D[g_c \circ K{}{}]} + \supnorm{D[k_c \circ R]}. \label{ FPOInvarianceEq7}
\end{align}
We have by assumption \cref{MainTheoremKnownBounds } of \cref{MainTheorem}
\begin{align}
\supnorm{D[A_c k_c]} &\le \operatornorm{A_c} \supnorm{Dk_c} \le \operatornorm{A_c} L_c. \label{ FPOInvarianceEq8}
\end{align}
Furthermore, we may estimate
\begin{align*}
\supnorm{DK{}{}} = \bignorm{ \tmatrix{ D \left( \operatorname{Id} + k_c \right)}{Dk_u}{Dk_s}}_0 \le \max \left\{ 1 + \supnorm{Dk_c} , \supnorm{Dk_u} , \supnorm{Dk_s} \right\}.
\end{align*}
Now note that $1 + \supnorm{Dk_c} \le 1 + L_c$, $\supnorm{Dk_u} \le L_u$ and $\supnorm{Dk_s} \le L_s$. From the definition of $L_u$ in \cref{FirstDerivativeUnknownsU}, we see that $L_u \le 1 + L_c$ if 
\begin{align*}
\operatornorm{A_u^{-1}}L_g \le 1 - L_r \operatornorm{A_u^{-1}} - \operatornorm{A_c} \operatornorm{A_u^{-1}}.
\end{align*}
The latter inequality holds since $L_u \ge 0$ and it is assumed that $\theta_{1,2} <1$.
This holds as $0 \le L_u$ and $\theta_{1,2} < 1$. Likewise, we can bound $L_s \le 1 + L_c$ as $\theta_{1,3} < 1$. Thus we find
\begin{align}
\supnorm{DK{}{}} \le 1 + L_c. \label{EstimateDk}
\end{align}
With this estimate, we get
\begin{align}
\supnorm{D[g_c \circ K{}{}]}  &\le  \supnorm{Dg_c}\supnorm{DK{}{}} \le L_g (1 + L_c). \label{ FPOInvarianceEq9}
\end{align}
Finally, we have
\begin{align}
\supnorm{D[k_c \circ R]} &\le \supnorm{Dk_c} \supnorm{DR} \le L_c \left( \operatornorm{A_c} + L_r \right). \label{ FPOInvarianceEq10}
\end{align}
By combining \cref{ FPOInvarianceEq8,, FPOInvarianceEq9,, FPOInvarianceEq10} we obtain from \cref{ FPOInvarianceEq7} that
\begin{align*}
\supnorm{D[ \Theta^{}_1(\Lambda)]} \le \left( 1 - L_c \right) L_r + L_c L_r = L_r.
\end{align*}
The inequalities $\supnorm{D[\Theta^{}_2(\Lambda)]} \le L_u$ and $\supnorm{D[\Theta^{}_3(\Lambda)]} \le L_s$ follow from similar calculations.

Thus we see that $\Theta^{}$ leaves $\Gamma_0$ invariant.
\end{proof}

\subsection{A second invariant set for \texorpdfstring{$\Theta$}{theta}}

We would now like to prove that $\Theta^{} : \Gamma_0 \to \Gamma_0$ is a contraction with respect to the $C^1$ norm, i.e. we must show that $\onenorm{\Theta^{}(\Lambda) - \Theta^{}(\tilde{\Lambda})} \le c \onenorm{\Lambda - \tilde{\Lambda}}$ for all $\Lambda, \tilde{\Lambda} \in \Gamma_0$ and some $c<1$. We will need to restrict to a subset of $\Gamma_0$ to obtain such a bound. To motivate our choice of the subset, let us look more carefully at the second component of $\Theta^{}$ and focus on the bound on its derivative. For instance, one obtains an estimate of the form
\begin{align*}
\supnorm{D\Theta^{}_2(\Lambda) - D\Theta^{}_2(\tilde{\Lambda})} &\le  \operatornorm{A_u^{-1}} \supnorm{D[k_u \circ (A_c +r )] - D[\tilde{k}_u \circ (A_c + \tilde{r}) ]} \\
			&\quad + \operatornorm{A_u^{-1}} \supnorm{D[g_u \circ K{}{}] - D[g_u \circ \tilde{K{}{}}]}.
\end{align*}
If we now estimate the second factor of the first term, we obtain
\begin{align*}
\supnorm{D[k_u \circ (A_c +r )] - D[\tilde{k}_u \circ (A_c + \tilde{r}) ]} &\le \supnorm{D[k_u \circ (A_c + r)] - D[k_u \circ (A_c + \tilde{r})]} \\
					&\quad + \supnorm{D[k_u \circ (A_c + \tilde{r})] - D[\tilde{k}_u \circ (A_c + \tilde{r})]}.
\end{align*}
We can bound the first term in the right hand side by
\begin{align*}
\supnorm{D[k_u \circ (A_c +r )] - D[k_u \circ (A_c + \tilde{r}) ]} &\le \supnorm{Dk_u(A_c + r) - Dk_u  (A_c + \tilde{r})} \operatornorm{A_c} \\
					&\quad + \supnorm{Dk_u(A_c + r)Dr - Dk_u  (A_c + \tilde{r})D\tilde{r}}.
\end{align*}
But, to estimate $\supnorm{Dk_u(A_c + r) - Dk_u  (A_c + \tilde{r})}$ in terms of $\onenorm{r - \tilde{r}}$ we would need a uniform bound on the Lipschitz constant of $Dk_u$ for $\tmatrix{r}{k_u}{k_s} \in \Gamma_0$.

Therefore, we will restrict $\Theta^{}$ to a subset of $\Gamma_0$ consisting of once differentiable functions with uniform bound on the Lipschitz constant of the derivative. In fact, we will restrict $\Theta^{}$ to a subset of twice differentiable functions with uniform bound on the supremum norm of the second derivative. We make this choice rather than working with $C^1$ function with a Lipschitz bound on the derivative, because we  later want to show that $r$, $k_u$ and $k_s$ are $C^n$. We thus  define for $\delta >0$ the space of $C^2$ functions
\begin{align}
\Gamma_1( \delta) \isdef \Gamma_0 \cap \left\{ \begin{pmatrix} r \\ k_u \\ k_s \end{pmatrix} \in C^{2}_b(X_c, X)   \middle\vert \  \begin{matrix*}[l] 
\supnorm{ D^2r } \le \delta \\
\supnorm{ D^2k_u } \le \delta \\ 
\supnorm{ D^2k_s } \le \delta \end{matrix*}  \right\}. \label{EquationUniformSecondDerivativeBound}
\end{align}
In \cref{MainTheorem}, we assume that $k_c$ and $g$ have bounded second derivative. If we take $\supnorm{D^2k_c} \le \varepsilon$ and $\supnorm{D^2 g} \le \varepsilon$ for some positive $\varepsilon$, then we will be able to construct an explicit $\delta(\varepsilon)$ such that $\Gamma_1(\delta(\varepsilon))$ is invariant under $\Theta^{}$.

\begin{proposition}\label{UniformSecondDerivativeBound}
Let $\varepsilon > 0$ and assume that $\supnorm{D^2 g },\supnorm{D^2 k_c } \le \varepsilon$. Furthermore, assume that $L_g$ and $L_c$ are small in the sense of \cref{SmallnessConditions} for $n=2$. Then there exists a $\delta(\varepsilon) > 0$, which we explicitly define in \cref{DeltaDefinition}, such that $\Gamma_1(\delta(\varepsilon))$ is invariant under $\Theta^{}$. Furthermore, $\delta(\varepsilon) \downarrow 0$ as $\varepsilon \downarrow 0$.
\end{proposition}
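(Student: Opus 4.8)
The plan is to reduce everything to estimates on second derivatives. Since $\Gamma_1(\delta)\subseteq\Gamma_0$, \cref{LemmaBanachSpace} already shows that for $\Lambda=\tmatrix{r}{k_u}{k_s}\in\Gamma_1(\delta)$ the image $\Theta(\Lambda)$ vanishes together with its first derivative at $0$ and satisfies $\supnorm{D\Theta_1(\Lambda)}\le L_r$, $\supnorm{D\Theta_2(\Lambda)}\le L_u$, $\supnorm{D\Theta_3(\Lambda)}\le L_s$. So it remains to verify that each component of $\Theta(\Lambda)$ is $C^2$ with uniformly bounded second derivative, and then to pick $\delta(\varepsilon)$ making this self-consistent. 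Concretely, I would prove that there are constants $R_1,R_2,R_3>0$, depending only on $\operatornorm{A_c}$, $\operatornorm{A_u}$, $\operatornorm{A_s}$, $L_c$, $L_r$, $L_t$, such that
\begin{align*}
\supnorm{D^2\Theta_i(\Lambda)}\le\theta_{2,i}\,\delta+R_i\,\varepsilon,\qquad i=1,2,3,
\end{align*}
whenever $\delta\ge\varepsilon$, with $\theta_{2,i}$ the contraction constants of \cref{ContractionConstants1,ContractionConstants2,ContractionConstants3} for $\tilde n=2$. We may assume $\delta\ge\varepsilon$ at no cost, since $\varepsilon$ will be put into the definition of $\delta(\varepsilon)$.

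The one genuinely new ingredient is a bound on the second derivative of the inverse map. By \cref{GlobalDiffeomorphism} and the discussion after it, $\supnorm{Dr}\le L_r<\operatornorm{A_c^{-1}}^{-1}$ guarantees $(A_c+r)^{-1}=A_c^{-1}+t$ with $t\in C^1_b$ and $\supnorm{Dt}\le L_t$. Differentiating $(A_c+r)\circ(A_c+r)^{-1}=\operatorname{Id}$ once gives $D(A_c+r)^{-1}=\bigl(A_c+Dr\circ(A_c+r)^{-1}\bigr)^{-1}$, an operator of norm at most $\operatornorm{A_c^{-1}}+L_t=L_{-1}$; differentiating once more and solving for the second derivative yields $\supnorm{D^2 t}\le L_{-1}\cdot\supnorm{D^2r}\cdot L_{-1}^2\le L_{-1}^{3}\,\delta$. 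The same bootstrap, run one order higher exactly as in the proof of \cref{GlobalDiffeomorphism}, shows $(A_c+r)^{-1}$ is in fact $C^2$; hence $\Theta_3(\Lambda)$ is $C^2$, and $\Theta_1(\Lambda),\Theta_2(\Lambda)$ are $C^2$ because $g$ and $k_c$ are $C^n$ with $n\ge2$ while $r,k_u,k_s$ are $C^2$.

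Then I would carry out the core computation: apply the second-order chain rule \cref{ChainRuleExplained}, i.e.\ $D^2[f\circ h]=D^2f(h)\,(Dh)^{\otimes2}+Df(h)\,D^2h$, to every composition occurring in the three components of $\Theta$. Here one uses $\supnorm{DK{}{}}\le1+L_c$ from \cref{EstimateDk}, the hypotheses $\supnorm{D^2g},\supnorm{D^2k_c}\le\varepsilon$, the $\Gamma_0$ bounds $\supnorm{Dr}\le L_r$, $\supnorm{Dk_u}\le L_u$, $\supnorm{Dk_s}\le L_s$, the $\Gamma_1(\delta)$ bounds $\supnorm{D^2r},\supnorm{D^2k_u},\supnorm{D^2k_s}\le\delta$ (so $\supnorm{D^2K{}{}}\le\max\{\varepsilon,\delta\}=\delta$ once $\delta\ge\varepsilon$), and the inverse bounds just obtained. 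Grouping the terms carrying a factor $\delta$ reproduces precisely $\theta_{2,i}$, while every remaining term carries a factor $\varepsilon$; for example $R_1=\operatornorm{A_c}+(1+L_c)^2+(\operatornorm{A_c}+L_r)^2$ and $R_2=\operatornorm{A_u^{-1}}(1+L_c)^2$. The triple composition $g_s\circ K{}{}\circ(A_c+r)^{-1}$ in $\Theta_3$ is handled by first estimating $g_s\circ K{}{}$ as a single $C^2$ map and then composing with $(A_c+r)^{-1}$.

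Finally, set
\begin{align*}
\delta(\varepsilon)\isdef\max\Bigl\{\varepsilon,\ \tfrac{R_1\varepsilon}{1-\theta_{2,1}},\ \tfrac{R_2\varepsilon}{1-\theta_{2,2}},\ \tfrac{R_3\varepsilon}{1-\theta_{2,3}}\Bigr\}.
\end{align*}
Since $L_g$ and $L_c$ are small in the sense of \cref{SmallnessConditions} for $n=2$, each $\theta_{2,i}<1$, so $\delta(\varepsilon)$ is well-defined, strictly positive, and $O(\varepsilon)$, whence $\delta(\varepsilon)\downarrow0$ as $\varepsilon\downarrow0$. For $\delta=\delta(\varepsilon)$ we have $\delta\ge\varepsilon$ and $\theta_{2,i}\delta+R_i\varepsilon\le\theta_{2,i}\delta+(1-\theta_{2,i})\delta=\delta$, so $\supnorm{D^2\Theta_i(\Lambda)}\le\delta$ for every $\Lambda\in\Gamma_1(\delta(\varepsilon))$, which together with \cref{LemmaBanachSpace} gives $\Theta(\Lambda)\in\Gamma_1(\delta(\varepsilon))$. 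The conceptual content is light once the contraction constants are known to be $<1$; the delicate parts I expect to be (i) the bound on $\supnorm{D^2t}$ and the $C^2$ regularity of $(A_c+r)^{-1}$, and (ii) the bookkeeping needed to match the $\delta$-coefficients of the chain-rule estimates with the prescribed $\theta_{2,i}$, especially for the triple composition in $\Theta_3$.
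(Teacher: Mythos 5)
Your proposal is correct and follows essentially the same route as the paper: component-wise second-order chain-rule estimates yielding $\supnorm{D^2\Theta_i(\Lambda)}\le\theta_{2,i}\delta+C_i(\varepsilon)$, the bound $\supnorm{D^2T}\le L_{-1}^3\delta$ obtained by differentiating $R\circ T=\operatorname{Id}$ twice, and $\delta(\varepsilon)$ defined as the maximum of $C_i(\varepsilon)/(1-\theta_{2,i})$. The only (harmless) cosmetic difference is that you normalize $\delta\ge\varepsilon$ so that $\supnorm{D^2K}\le\delta$, whereas the paper uses the rough bound $\max\{\varepsilon,\delta\}\le\varepsilon+\delta$, which shifts a few $L_g\varepsilon$ terms between the $C_i(\varepsilon)$ and leaves the $\delta$-coefficients, hence the $\theta_{2,i}$, unchanged.
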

\begin{proof}
Let $\varepsilon > 0$. We want to find $\delta(\varepsilon) > 0$ such that $\Gamma_1(\delta(\varepsilon))$ is invariant under $\Theta^{}$. Let $\Lambda = \tmatrix{r}{k_u}{k_s} \in \Gamma_1(\delta)$, and introduce the shorthand notation $R= (A_c + r)$ and $T= (A_c + r)^{-1}$. We have to show that $\supnorm{D^2[\Theta^{}_i(\Lambda)]} \le \delta$ for $i=1,2,3$.

We estimate the first component by
\begin{align}
\supnorm{D^2 \Theta^{}_1( \Lambda ) } &\le \supnorm{ D^2 [A_c k_c]} + \supnorm{D^2[g_c \circ K{}{}]} + \supnorm{D^2[k_c \circ R]}. \label{ BanachSpaceC2Eq1}
\end{align}
We estimate the terms separately. First,
\begin{align}
\supnorm{ D^2 [A_c k_c]} &= \supnorm{ A_c D^2k_c} \le \operatornorm{A_c} \supnorm{D^2k_c} \le \operatornorm{A_c} \varepsilon. \label{ BanachSpaceC2Eq2}
\end{align}
Second, as $\supnorm{D^2K{}{}} \le \max\{ \supnorm{D^2k_c} , \supnorm{D^2k_u}, \supnorm{D^2k_s} \} \le \max\{ \varepsilon , \delta \} \le \varepsilon + \delta$, where we choose the rough estimate of $\varepsilon + \delta$ in order to write $\delta(\varepsilon)$ explicitly in \cref{DeltaDefinition}, we find
\begin{align}
\supnorm{D^2[g_c \circ K{}{}]} &\le \supnorm{D^2g_c(K{}{}) ( DK{}{} , DK{}{})} + \supnorm{Dg_c(K{}{}) D^2K{}{}} \nonumber \\
					&\le \supnorm{D^2g_c}\supnorm{DK{}{}}^2 + \supnorm{Dg_c} \supnorm{D^2K{}{}} \nonumber \\
					&\le (1 + L_c)^2 \varepsilon + L_g \varepsilon + L_g \delta. \label{ BanachSpaceC2Eq3}
\end{align}
Finally, we have
\begin{align}
\supnorm{D^2[k_c \circ R]} &\le \supnorm{D^2k_c(R) (DR , DR)} + \supnorm{Dk_c(R) D^2 r} \nonumber \\
					&\le (\operatornorm{A_c} + L_r)^2 \varepsilon + L_c \delta. \label{ BanachSpaceC2Eq4}
\end{align}
So inequality \cref{ BanachSpaceC2Eq1} together with estimates \cref{ BanachSpaceC2Eq2,, BanachSpaceC2Eq3,, BanachSpaceC2Eq4} gives
\begin{align}
\supnorm{D^2 \Theta^{}_1( \Lambda ) }	&\le \left( L_g + L_c \right) \delta + \left( \operatornorm{A_c} + \left( 1 + L_c \right)^2 + L_g  +  ( \operatornorm{A_c} + L_r)^2  \right) \varepsilon \nonumber \\
							&= \theta_{2,1} \delta + C_1(\varepsilon), \label{ rComponentBanachSpaceC2}
\end{align}
where $C_1(\varepsilon) \isdef  \left(\operatornorm{A_c} + \left( 1 + L_c \right)^2 + L_g  + ( \operatornorm{A_c} + L_r)^2  \right)\varepsilon$. 

Likewise, we estimate the second component by
\begin{align}
\supnorm{D^2[ \Theta^{}_2( \Lambda ) ]} &\le \operatornorm{ A_u^{-1}} ( \supnorm{D^2[g_u \circ K{}{}]} + \supnorm{D^2[k_u \circ R]} ). \label{ BanachSpaceC2Eq5}
\end{align}
As before, we estimate the terms separately to find
\begin{align*}
\supnorm{D^2[g_u \circ K{}{}]} &\le \supnorm{D^2g_u} \supnorm{DK{}{}}^2 + \supnorm{Dg_u} \supnorm{D^2 K{}{}}  \\
						&\le (1 + L_c)^2 \varepsilon + L_g \varepsilon + L_g \delta,  \\ 
\supnorm{D^2[k_u \circ R]} &\le \supnorm{D^2 k_u} \supnorm{R}^2 + \supnorm{Dk_u} \supnorm{D^2r}  \\
							&\le ( \operatornorm{A_c} + L_r )^2 \delta  + L_u \delta. 
\end{align*}
Hence, inequality \cref{ BanachSpaceC2Eq5} becomes
\begin{align}
\supnorm{D^2 [\Theta^{}_2( \Lambda )] } &\le  \operatornorm{A_u^{-1}} ( (\operatornorm{A_c} + L_r)^2 + L_u + L_g ) \delta \nonumber \\
							&\quad + \operatornorm{A_u^{-1}}( (1+ L_c)^2  + L_g ) \varepsilon  \nonumber \\
							&= \theta_{2,2} \delta + C_2(\varepsilon),  \label{ uComponentBanachSpaceC2}
\end{align}
where $C_2(\varepsilon) \isdef \operatornorm{A_u^{-1}}( (1+ L_c)^2 + L_g) \varepsilon$. 

Finally, we estimate the third component by
\begin{align}
\supnorm{D^2 [\Theta^{}_3( \Lambda ) ]} &\le  \operatornorm{A_s} \supnorm{D^2[k_s \circ T]} + \supnorm{D^2[g_s \circ K{}{} \circ T]}. \label{ BanachSpaceC2Eq8}
\end{align}
Before we estimate the different parts of the right hand side of \cref{ BanachSpaceC2Eq8}, we estimate $\supnorm{D^2 T}$ by applying the chain rule twice to the right hand side of $0 = D^2 \operatorname{Id} = D^2[R \circ T]$.
\begin{align*}
0 = D^2 [R \circ T] = D^2 R(T) (DT, DT) + DR(T) D^2T.
\end{align*}
We let $DR(T)^{-1} = DT$ act on the left to obtain the upper bound
\begin{align}
\supnorm{D^2 T} = \supnorm{ - DT D^2R(T) (DT,DT)} \le L_{-1}^3 \delta. \label{EstimateDT}
\end{align}
We now find the following estimates for the terms in \cref{ BanachSpaceC2Eq8}:
\begin{align*}
\supnorm{D^2[k_s \circ T]} &\le \supnorm{D^2 k_s}\supnorm{DT}^2 + \supnorm{Dk_s}\supnorm{D^2T} \\
				 &\le L_{-1}^2 \delta + L_s L_{-1}^3\delta, \\
\supnorm{D^2[g_s \circ K{}{} \circ T]} &\le \supnorm{D^2 g_s} (\supnorm{DK{}{}} \supnorm{DT})^2 + \supnorm{Dg_s} \supnorm{D^2K{}{}} \supnorm{DT}^2  \\
					&\quad + \supnorm{Dg_s} \supnorm{DK{}{}}\supnorm{D^2T}  \\
					&\le L_{-1}^2(1 + L_c)^2 \varepsilon + L_{-1}^2L_g \delta + L_{-1}^2L_g \varepsilon + L_{-1}^3 L_g(1+L_c) \delta.
\end{align*}
Inequality \cref{ BanachSpaceC2Eq8} thus becomes
\begin{align}
\supnorm{D^3 [\Theta^{}_2( \Lambda )] } &\le L_{-1}^2 ( \operatornorm{A_s} ( 1 + L_{-1}L_s) + L_g (1 + L_{-1}(1+L_c)))  \delta  \nonumber \\
							&\quad + L_{-1}^2 ( (1+L_c)^2 + L_g) \varepsilon \nonumber \\
							&= \theta_{2,3} \delta + C_3(\varepsilon),  \label{ sComponentBanachSpaceC2}
\end{align}
where $C_3(\varepsilon) \isdef  L_{-1}^2 ( (1+L_c)^2 + L_g) \varepsilon$. 

If we show that $\delta(\varepsilon) > 0$ can be chosen such that
\begin{align*}
\theta_{2,i} \delta(\varepsilon) + C_i(\varepsilon) \le \delta(\varepsilon)
\end{align*}
then we can estimate inequalities \cref{ rComponentBanachSpaceC2,, uComponentBanachSpaceC2,, sComponentBanachSpaceC2} by $\delta(\varepsilon)$ which shows that $\Gamma_1(\delta(\varepsilon))$ is invariant under $\Theta^{}$. By assumption, we have $\theta_{2,i} < 1$ for $i=1,2,3$. Therefore we can define
\begin{align}
\delta(\varepsilon) \isdef  \max_{i=1,2,3} \left\{ \frac{C_i(\varepsilon)}{1- \theta_{2,i} }\right\} > 0. \label{DeltaDefinition}
\end{align}
This gives for $i=1,2,3$
\begin{align*}
\theta_{2,i} \delta(\varepsilon) + C_i(\varepsilon) = \theta_{2,i} \delta(\varepsilon)+ (1 - \theta_{2,i}) \frac{C_i(\varepsilon)}{1 - \theta_{2,1}} \le \theta_{2,i} \delta(\varepsilon) + (1 - \theta_{2,i}) \delta(\varepsilon) = \delta(\varepsilon).
\end{align*}
Furthermore, we have by construction that $\delta(\varepsilon) \downarrow 0$ when $\varepsilon \downarrow 0$, since $C_i(\varepsilon) \downarrow 0$ as $\varepsilon \downarrow 0$ for $i=1,2,3$.
\end{proof}

\subsection{Estimates for compositions}\label{C1ContinuitySection}

We already mentioned before \cref{UniformSecondDerivativeBound} that we have to estimate expressions such as $\supnorm{D[k_u\circ(A_c + r)] - D[\tilde{k}_u \circ( A_c + \tilde{r})]}$ in terms of $\onenorm{k_u - \tilde{k}_u}$ and $\onenorm{r - \tilde{r}}$ to show that $\Theta^{}$ is a contraction with respect to the $C^1$ norm. Furthermore, we briefly showed part of the steps we would take to achieve the desired estimate. However, to show that $\Theta^{}$ is a contraction with respect to the $C^1$ norm, we must also estimate expressions such as $k_u\circ(A_c + r) - \tilde{k}_u \circ( A_c + \tilde{r})$ in terms of $\onenorm{k_u - \tilde{k}_u}$ and $\onenorm{r - \tilde{r}}$. We will therefore provide a general result which allows use to bound both $\supnorm{k_u\circ(A_c + r) - \tilde{k}_u \circ( A_c + \tilde{r})}$ and $\supnorm{D[k_u\circ(A_c + r)] - D[\tilde{k}_u \circ( A_c + \tilde{r})]}$, and the corresponding expressions in the first and third component of $\Theta^{}(\Lambda) - \Theta^{}(\tilde{\Lambda})$. As we did in the definition of $\Gamma_1$, we prefer to work with twice differentiable functions instead of differentiable functions with Lipschitz first derivative.

\begin{lemma}\label{DifferenceC0Norm} Let $X$, $Y$ and $Z$ be Banach spaces. 
\begin{enumerate}[label = {\roman*)}, ref={\cref{DifferenceC0Norm}\roman*)}]
\item \label{DifferenceC0Norm1} For $f_1 \in C_b^0(Y,Z)$, $g_1 \in C_b^1(Y,Z)$, $ f_2,g_2 \in C_b^0(X,Y)$ we have the $C^0$-estimate
\begin{align*}
\supnorm{f_1 \circ f_2 - g_1 \circ g_2 } &\le \supnorm{f_1 - g_1 } + \supnorm{Dg_1}\supnorm{f_2 - g_2}. 
\end{align*}
\item\label{DifferenceC0Norm2}   For $f_1, g_1 \in C_b^2(Y,Z)$, $ f_2,g_2 \in C_b^1(X,Y)$ we have the $C^1$-estimate
\begin{align*}
\supnorm{D [ f_1 \circ f_2] - D[  g_1 \circ g_2 ] } &\le  \supnorm{ Dg_2 } \left( \supnorm{D^2g_1}   \supnorm{f_2 - g_2} + \supnorm{ Df_1 - Dg_1 } \right)  \nonumber \\
				&\quad + \supnorm{Df_1}  \supnorm{Df_2 - Dg_2}.
\end{align*}
\end{enumerate}
\end{lemma}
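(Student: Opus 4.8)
The plan is to prove both estimates by the same mechanism: write the difference of composites as a telescoping sum in which only one of the two slots is changed at a time, and then bound each term using the mean value inequality together with the uniform bounds supplied by the hypotheses ($\supnorm{Dg_1}$, $\supnorm{D^2g_1}$, $\supnorm{Df_1}$, $\supnorm{Dg_2}$). For part \emph{i)}, I would write
\begin{align*}
f_1\circ f_2 - g_1\circ g_2 = \bigl(f_1\circ f_2 - g_1\circ f_2\bigr) + \bigl(g_1\circ f_2 - g_1\circ g_2\bigr).
\end{align*}
The first bracket is pointwise bounded by $\supnorm{f_1 - g_1}$; the second is bounded by $\supnorm{Dg_1}\supnorm{f_2 - g_2}$ since $g_1$ is $C^1_b$ and hence Lipschitz with constant $\supnorm{Dg_1}$. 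Taking suprema gives the claim. This step is routine; the only point to be careful about is that $f_1$ need only be $C^0_b$ (so we must not differentiate it), which is exactly why the split is done by changing the \emph{inner} function first in the second bracket.

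For part \emph{ii)}, I would start from the chain rule $D[f_1\circ f_2](x) = Df_1(f_2(x))\,Df_2(x)$ and similarly for $g_1\circ g_2$, and then split the difference into three pieces by changing one factor at a time:
\begin{align*}
D[f_1\circ f_2] - D[g_1\circ g_2]
&= \bigl(Df_1(f_2) - Dg_1(f_2)\bigr)Df_2
 + \bigl(Dg_1(f_2) - Dg_1(g_2)\bigr)Df_2 \\
&\quad + Dg_1(g_2)\bigl(Df_2 - Dg_2\bigr).
\end{align*}
The first term is bounded by $\supnorm{Df_1 - Dg_1}\,\supnorm{Df_2}$ — but note the target estimate carries $\supnorm{Dg_2}$, not $\supnorm{Df_2}$, in front of $\supnorm{Df_1-Dg_1}$, so I would instead peel off the $Df_2$ factor as $Dg_2$ first, i.e. reorganize the telescoping so that the \emph{last} factor is switched from $Df_2$ to $Dg_2$ at the outset: write $Df_2 = Dg_2 + (Df_2 - Dg_2)$ and absorb the $Dg_1(g_2)$ in front. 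Concretely, group as $Dg_1(f_2)$ versus $Df_1(f_2)$ acting on $Dg_2$ (giving $\supnorm{Dg_2}\supnorm{Df_1 - Dg_1}$), then $Dg_1(f_2)$ versus $Dg_1(g_2)$ acting on $Dg_2$ — and here use that $Dg_1$ is itself Lipschitz with constant $\supnorm{D^2g_1}$, so this is bounded by $\supnorm{Dg_2}\supnorm{D^2g_1}\supnorm{f_2 - g_2}$ — and finally the remainder $Df_1(f_2)(Df_2 - Dg_2)$, bounded by $\supnorm{Df_1}\supnorm{Df_2 - Dg_2}$. Summing these three bounds gives exactly the stated right-hand side.

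The main subtlety — not an obstacle so much as a bookkeeping choice — is matching the precise shape of the right-hand side: the $C^1$ estimate must come out with coefficients $\supnorm{Dg_2}$, $\supnorm{D^2g_1}$, $\supnorm{Df_1}$ in the indicated positions rather than the ``symmetric'' choices one would get from a naive telescoping, because later applications (in the contraction argument for $\Theta$) need the bound in terms of the fixed/reference function's norms on the outer slots. So the real content is ordering the three telescoping steps correctly and invoking the mean value inequality for $Dg_1$ (which needs $g_1\in C^2_b$, explaining that hypothesis). Everything else is the standard mean value inequality in Banach spaces applied termwise, followed by taking the supremum over $x\in X$.
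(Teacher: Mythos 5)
Your proposal is correct and follows essentially the same route as the paper: part \emph{i)} by the same telescoping plus the mean value inequality for $g_1$, and part \emph{ii)} by the chain rule followed by the same grouping (the paper packages your two middle terms as a single application of part \emph{i)} to $Df_1(f_2)-Dg_1(g_2)$, which is identical in content). Your three-term identity sums correctly to $Df_1(f_2)Df_2-Dg_1(g_2)Dg_2$, and your remarks about which function may be differentiated and why $g_1\in C^2_b$ is needed match the paper's reasoning.
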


\begin{proof}
\textit{i)} We find by the Mean Value Theorem
\begin{align*}
\supnorm{ f_1 \circ f_2 &- g_1 \circ g_2 } \\
				&\le \supnorm{ f_1\circ f_2 - g_1 \circ f_2 }   + \supnorm{ g_1 \circ f_2 - g_1 \circ g_2 }   \\
				&\le \supnorm{ f_1 \circ f_2 - g_1 \circ f_2 } + \bignorm{\int_0^1 Dg_1(tf_2 +(1- t) g_2) \text{d}t \left( f_2 - g_2 \right) }_0\\
				&\le \supnorm{f_1 - g_1 } + \supnorm{ Dg_1} \supnorm{  f_2 - g_2}.
\end{align*}

\textit{ii)} We use the chain rule and triangle inequality to find
\begin{align*}
\supnorm{ D [ f_1 \circ f_2] - D[ g_1 \circ g_2] } &= \supnorm{Df_1(f_2)Df_2 - Dg_1(g_2)Dg_2}  \\
				&\le \supnorm{Df_1(f_2) \left(Df_2 -Dg_2 \right)}  \\
				&\quad + \supnorm{ \left( Df_1(f_2) - Dg_1(g_2) \right) Dg_2}.
\end{align*}
We estimate the first term using the submultiplicativity of the supremum norm:
\begin{align*}
\supnorm{Df_1(f_2) \left(Df_2 -Dg_2 \right)} &\le \supnorm{Df_1} \supnorm{Df_2 - Dg_2}.
\end{align*}
For the second term, we again use submultiplicativity of the norm and estimate the factor $Df_1(f_2) - Dg_1(g_2)$ with part i) of this lemma:
\begin{align*}
\supnorm{( Df_1(f_2) &- Dg_1(g_2) ) Dg_2} \nonumber \\
			&\quad \le  \left( \supnorm{Df_1 - Dg_1} + \supnorm{D^2g_1} \supnorm{f_2 - g_2} \right) \supnorm{Dg_2}. \qedhere
\end{align*}
\end{proof}

When we estimate the third component of  $\Theta^{}(\Lambda) - \Theta^{}(\tilde{\Lambda})$ with the previous lemma, we get an estimate involving $\norm{(A_c + r)^{-1}  - (A_c - \tilde{r})^{-1}}_i$ instead of $\norm{r - \tilde{r}}_i$ for $i=0,1$. So if we want to use the previous lemma to estimate $\Theta^{}(\Lambda) - \Theta^{}(\tilde{\Lambda})$ by $\onenorm{\Lambda - \tilde{\Lambda}}$, then we must find an estimate for $\supnorm{(A_c +r )^{-1} - (A_c + \tilde{r})^{-1}}$ and $\supnorm{D(A_c +r )^{-1} - D(A_c + \tilde{r})^{-1}}$ in terms of $\supnorm{r - \tilde{r}}$ and $\supnorm{Dr - D \tilde{r}}$.

\begin{lemma}\label{DifferenceInverse} Let $r_1, r_2 \in C_b^2(X_c,X_c)$ be such that $A_c + r_i$ is a diffeomorphism with $D(A_c + r_i)^{-1} \in C_b^1(X_c, \mathcal{L}(X_c,X_c))$ for $i=1,2$. 
\begin{enumerate}[label = {\roman*)}, ref={\cref{DifferenceInverse}\roman*)}]
\item\label{DifferenceInverse1} We have the $C^0$-estimate
\begin{align*}
\supnorm{(A_c + r_1)^{-1} - (A_c + r_2)^{-1}} \le \supnorm{D(A_c + r_1)^{-1}} \supnorm{r_1 - r_2}. 
\end{align*}
\item \label{DifferenceInverse2} We have the $C^1$-estimate
\begin{align*}
\supnorm{D(A_c + r_1)^{-1} &- D(A_c + r_2)^{-1}} \nonumber \\
			&\le \supnorm{D(A_c + r_1)^{-1}} \supnorm{D(A_c + r_2)^{-1}} \Big( \supnorm{Dr_1 - Dr_2} \nonumber \\
			&\quad + \supnorm{D^2r_2} \supnorm{D(A_c + r_1)^{-1}} \supnorm{r_1 - r_2} \Big).\end{align*}
\end{enumerate}
\end{lemma}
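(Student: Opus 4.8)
The plan is to reduce both estimates to two algebraic identities: a telescoping identity for $T_i \isdef (A_c+r_i)^{-1}$ in terms of $r_i$, and the resolvent identity $P^{-1}-Q^{-1}=P^{-1}(Q-P)Q^{-1}$ applied to $DT_i = [D(A_c+r_i)(T_i)]^{-1}$. Once these are in place, the bounds follow from the Mean Value Theorem and from \cref{DifferenceC0Norm1}.

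For part i), write $R_i \isdef A_c+r_i$, so that $R_1\circ T_1 = \operatorname{Id} = R_2\circ T_2$. Then
\begin{align*}
T_1 - T_2 &= T_1\circ (R_2\circ T_2) - T_1\circ (R_1\circ T_2), & R_2\circ T_2 - R_1\circ T_2 &= (r_2 - r_1)\circ T_2 .
\end{align*}
Applying the Mean Value Theorem to $T_1$ along the segment joining $R_1\circ T_2$ and $R_2\circ T_2$ (this is exactly the computation in the proof of \cref{DifferenceC0Norm1}, where one only needs the difference $(r_2-r_1)\circ T_2$ to be bounded, not each summand) gives $\supnorm{T_1-T_2}\le \supnorm{DT_1}\,\supnorm{(r_2-r_1)\circ T_2}$. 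Since $T_2$ is a bijection of $X_c$, $\supnorm{(r_2-r_1)\circ T_2}=\supnorm{r_1-r_2}$, and the claimed $C^0$-estimate follows.

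For part ii), differentiate $R_i\circ T_i=\operatorname{Id}$ to get $DR_i(T_i)\,DT_i=\operatorname{Id}$, i.e. $DT_i = [DR_i(T_i)]^{-1}$ with $DR_i = A_c+Dr_i$ invertible along the image of $T_i$ (guaranteed by the hypothesis that $A_c+r_i$ is a diffeomorphism with $C^1_b$ inverse). With $P = DR_1(T_1)$ and $Q=DR_2(T_2)$ the resolvent identity yields
\begin{align*}
DT_1 - DT_2 = DT_1\,\bigl(Dr_2(T_2)-Dr_1(T_1)\bigr)\,DT_2,
\end{align*}
so submultiplicativity of the operator norm gives $\supnorm{DT_1-DT_2}\le \supnorm{DT_1}\supnorm{DT_2}\,\supnorm{Dr_1\circ T_1 - Dr_2\circ T_2}$. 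Now estimate the last factor with \cref{DifferenceC0Norm1}, taking $f_1=Dr_1$, $f_2=T_1$, $g_1=Dr_2$, $g_2=T_2$ (this choice of roles is what produces $\supnorm{D^2r_2}$ rather than $\supnorm{D^2r_1}$), to obtain $\supnorm{Dr_1\circ T_1-Dr_2\circ T_2}\le \supnorm{Dr_1-Dr_2}+\supnorm{D^2r_2}\supnorm{T_1-T_2}$. Substituting the bound from part i) for $\supnorm{T_1-T_2}$ gives precisely the stated $C^1$-estimate.

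There is nothing deep here; the lemma is essentially bookkeeping. The only points that require care are (a) choosing the telescoping and resolvent identities so that the errors $(r_2-r_1)\circ T_2$ and $Dr_2(T_2)-Dr_1(T_1)$ appear cleanly, and (b) deciding which of $r_1,r_2$ carries the second-derivative bound when invoking \cref{DifferenceC0Norm1}, so that the asymmetric form with $\supnorm{D^2r_2}$ comes out as written. A minor technical caveat is the slight mismatch with the literal hypotheses of \cref{DifferenceC0Norm1} (the intermediate map $R_2\circ T_2 = \operatorname{Id}$ is not in $C^0_b$), which is harmless since only the bounded difference enters the Mean Value estimate.
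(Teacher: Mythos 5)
Your proposal is correct and follows essentially the same route as the paper: the telescoping identity $T_1-T_2=T_1\circ R_2\circ T_2-T_1\circ R_1\circ T_2$ combined with \cref{DifferenceC0Norm1} for part i), and the resolvent identity applied to $DT_i=[DR_i(T_i)]^{-1}$ followed by \cref{DifferenceC0Norm1} (with $r_2$ carrying the second derivative) for part ii). The boundedness caveat you flag is present in the paper's own application of \cref{DifferenceC0Norm1} as well and is indeed harmless.
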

\begin{proof}
\textit{i)} We denote $R_i = A_c +r_i$ for $i=1,2$. \Cref{DifferenceC0Norm1} implies that
\begin{align*}
\supnorm{R_1^{-1}- R_2^{-1}} &= \supnorm{R_1^{-1} \circ R_2 \circ R_2^{-1} - R_1^{-1} \circ R_1 \circ R_2^{-1} }  \\
					&\le \supnorm{DR_1^{-1}}   \supnorm{ R_2 \circ R_2^{-1} - R_1 \circ R_2^{-1} }\\
					&= \supnorm{DR_1^{-1} } \supnorm{r_1 - r_2}.
\end{align*}

\textit{ii)} For the $C^1$-estimate we denote $T_i = DR_i^{-1} \in C_b^1(X_c, \mathcal{L}(X_c,X_c))$ for $i=1,2$. Let $x \in X_c$, then we know that $T_i(x) \in \mathcal{L}(X_c, X_c)$ is invertible for $i=1,2$ by the Inverse Function Theorem. We find
\begin{align}
\operatornorm{T_1(x) - T_2(x)} &= \operatornorm{T_2(x) \left(T_2(x)^{-1} - T_1(x)^{-1} \right) T_1(x)}  \nonumber \\
						&\le \operatornorm{T_1(x)} \operatornorm{T_2(x)} \operatornorm{ T_2(x)^{-1} - T_1(x)^{-1}}. \label{DifferenceC0Norm2Eq}
\end{align}
Furthermore, the Inverse Function Theorem allows us to rewrite
\begin{align*}
T_i(x)^{-1} = \left( DR_i^{-1}(x)  \right)^{-1} = DR_i(R_i^{-1}(x)) = A_c + Dr_i(R_i^{-1}(x)).
\end{align*}
Taking the supremum over $x \in X_c$ gives
\begin{align*}
\supnorm{T_1^{-1} - T_2^{-1}} \le \supnorm{Dr_1\left( R_1^{-1} \right) - Dr_2\left( R_2^{-1}\right)}.
\end{align*}
By using \cref{DifferenceC0Norm1} we find
\begin{align*}		
\supnorm{T_1^{-1} - T_2^{-1}} &\le  \supnorm{Dr_1 - Dr_2} + \supnorm{D^2 r_2} \supnorm{R_1^{-1} - R_2^{-1}}.
\end{align*}
Then we use part i) of this lemma applied to $\supnorm{R_1^{-1} - R_2^{-1}}$ to obtain
\begin{align*}
\supnorm{T_1^{-1} - T_2^{-1}}  &\le  \supnorm{Dr_1 - Dr_2} + \supnorm{D^2r_2} \supnorm{D(A_c + r_1)^{-1}} \supnorm{r_1 - r_2}.
\end{align*}
The result now follows from taking the supremum of $x \in X_c$ in inequality \cref{DifferenceC0Norm2Eq} and using the above estimate to bound $\sup_{x \in X_c} \operatornorm{ T_2(x)^{-1} - T_1(x)^{-1}}$.
\end{proof}

To bound $\Theta^{}(\Lambda) - \Theta^{}(\tilde{\Lambda})$ by $\onenorm{\Lambda - \tilde{\Lambda}}$, we can use \cref{DifferenceC0Norm} for the first two components. For the third component, we can use the same lemma together with \cref{DifferenceInverse} for the desired bound. However, while the first and second component consists of a single composition, the third component contains a double composition.

\begin{lemma}\label{DifferenceComponentThree} Let $r,\tilde{r} \in C_b^2(X_c,X_c)$  such that $\supnorm{Dr},\supnorm{D\tilde{r}} \le L_r$ and assume that $L_r \le \operatornorm{A_c^{-1}}^{-1}$. Furthermore, let $X$ and $Y$ be Banach spaces with functions $h \in C^2_b(Y,X)$ and $f_1,f_2 \in C^2_b(X_c,Y)$.
\begin{enumerate}[label = {\roman*)}, ref={\cref{DifferenceComponentThree}\roman*)}]
\item \label{DifferenceComponentThree1} We have the $C^0$-estimate
\begin{align*}
\supnorm{h \circ f_1 \circ (A_c + r)^{-1}  &- h \circ f_2 \circ (A_c + \tilde{r})^{-1}} \nonumber \\
		&\le \supnorm{Dh} \left(  \supnorm{f_1 - f_2} + L_{-1} \supnorm{Df_2} \supnorm{ r - \tilde{r}} \right).
\end{align*}
\item \label{DifferenceComponentThree2}  If $\supnorm{D^2\tilde{r}} \le \delta(\varepsilon)$ for some $\varepsilon > 0$, then we have the $C^1$-estimate
\begin{align*}
\supnorm{D [ h \circ f_1 &\circ (A_c + r)^{-1}] - D[ h \circ f_2 \circ (A_c + \tilde{r})^{-1} ] } \nonumber \\
				&\quad \le L_{-1} \left( \supnorm{Dh} + \supnorm{D^2 h }  \supnorm{Df_2 } \right) \onenorm{ f_1 - f_2 }   \nonumber \\
				&\quad \quad +  L_{-1}^2 \left( \supnorm{D^2h} \supnorm{Df_2}^2 + \supnorm{Dh} \supnorm{D^2f_2} \right) \onenorm{r - \tilde{r}}	\nonumber \\
				&\quad \quad +   L_{-1}^2 \supnorm{Dh} \supnorm{Df_1}\left( 1 + L_{-1} \delta(\varepsilon) \right) \onenorm{r - \tilde{r}}. 
\end{align*}
\end{enumerate}
\end{lemma}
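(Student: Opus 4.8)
The plan is to assemble both estimates from the composition bounds in \cref{DifferenceC0Norm}, the inverse bounds in \cref{DifferenceInverse}, and elementary chain-rule estimates. Throughout I would use that $\supnorm{D(A_c+r)^{-1}} \le L_{-1}$ and $\supnorm{D(A_c+\tilde r)^{-1}} \le L_{-1}$: this follows from the hypotheses $\supnorm{Dr},\supnorm{D\tilde r} \le L_r \le \operatornorm{A_c^{-1}}^{-1}$ together with the discussion following \cref{GlobalDiffeomorphism}, which gives $(A_c+r)^{-1} = A_c^{-1} + t$ with $\supnorm{Dt} \le L_t$, so that $\supnorm{D(A_c+r)^{-1}} \le \operatornorm{A_c^{-1}} + L_t = L_{-1}$.

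For part i) I would regard the two double compositions as single compositions of the outer maps $h\circ f_1$, $h\circ f_2$ with the inner maps $(A_c+r)^{-1}$, $(A_c+\tilde r)^{-1}$, and apply \cref{DifferenceC0Norm1} to obtain
\[
\supnorm{h\circ f_1\circ(A_c+r)^{-1} - h\circ f_2\circ(A_c+\tilde r)^{-1}} \le \supnorm{h\circ f_1 - h\circ f_2} + \supnorm{D(h\circ f_2)}\,\supnorm{(A_c+r)^{-1} - (A_c+\tilde r)^{-1}}.
\]
Then I would bound $\supnorm{h\circ f_1 - h\circ f_2} \le \supnorm{Dh}\supnorm{f_1-f_2}$ (again by \cref{DifferenceC0Norm1}, taking $h$ in both outer slots), $\supnorm{D(h\circ f_2)} \le \supnorm{Dh}\supnorm{Df_2}$ by the chain rule, and $\supnorm{(A_c+r)^{-1} - (A_c+\tilde r)^{-1}} \le L_{-1}\supnorm{r-\tilde r}$ by \cref{DifferenceInverse1} together with the bound on $\supnorm{D(A_c+r)^{-1}}$. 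Collecting these gives the claimed $C^0$-estimate.

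For part ii) I would run the same bookkeeping one derivative higher, applying \cref{DifferenceC0Norm2} with outer maps $h\circ f_1, h\circ f_2 \in C^2_b$ and inner maps $(A_c+r)^{-1},(A_c+\tilde r)^{-1}$; this produces three groups of terms. The $\supnorm{D^2(h\circ f_2)}$ factor is bounded by $\supnorm{D^2h}\supnorm{Df_2}^2 + \supnorm{Dh}\supnorm{D^2f_2}$ via the chain rule, and it multiplies $\supnorm{(A_c+r)^{-1}-(A_c+\tilde r)^{-1}} \le L_{-1}\supnorm{r-\tilde r}$ and the prefactor $\supnorm{D(A_c+\tilde r)^{-1}} \le L_{-1}$. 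The term $\supnorm{D(h\circ f_1) - D(h\circ f_2)}$ I would bound by $(\supnorm{D^2h}\supnorm{Df_2} + \supnorm{Dh})\onenorm{f_1-f_2}$, using \cref{DifferenceC0Norm2} with $h$ in both outer slots and estimating $\supnorm{f_1-f_2}$ and $\supnorm{Df_1-Df_2}$ by $\onenorm{f_1-f_2}$, again with prefactor $L_{-1}$. Finally $\supnorm{D(h\circ f_1)} \le \supnorm{Dh}\supnorm{Df_1}$ multiplies $\supnorm{D(A_c+r)^{-1} - D(A_c+\tilde r)^{-1}}$, which by \cref{DifferenceInverse2} and $\supnorm{D^2\tilde r}\le\delta(\varepsilon)$ is at most $L_{-1}^2\bigl(\supnorm{Dr-D\tilde r} + \delta(\varepsilon)L_{-1}\supnorm{r-\tilde r}\bigr) \le L_{-1}^2(1 + L_{-1}\delta(\varepsilon))\onenorm{r-\tilde r}$. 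Summing the three groups and regrouping by $\onenorm{f_1-f_2}$ and $\onenorm{r-\tilde r}$ reproduces the stated $C^1$-estimate.

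I do not expect a genuine obstacle: both parts are mechanical once \cref{DifferenceC0Norm,DifferenceInverse} are available. The only thing requiring care is organizing the chain of substitutions into \cref{DifferenceC0Norm} so that each difference — $f_1-f_2$ or $r-\tilde r$ — ends up carrying the correct norms of $h$, $f_1$, $f_2$ and the correct power of $L_{-1}$; the whole purpose of the lemma is to perform this computation once so it can be reused when estimating the third component of $\Theta(\Lambda)-\Theta(\tilde\Lambda)$.
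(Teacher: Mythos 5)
Your proof is correct and follows essentially the same route as the paper: both rest on \cref{DifferenceC0Norm} for the compositions and \cref{DifferenceInverse} for the inverses, with $\supnorm{D(A_c+r)^{-1}}\le L_{-1}$ supplying the powers of $L_{-1}$. The only (cosmetic) difference is the association of the triple composition — you peel off the inner map $(A_c+r)^{-1}$ first, treating $h\circ f_i$ as the outer function, whereas the paper peels off the outer $h$ first and applies \cref{DifferenceC0Norm} twice — and both groupings produce the identical terms in the stated bounds.
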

\begin{proof}
\textit{i)} For the $C^0$-estimate, we first use \cref{DifferenceC0Norm1} twice:
\begin{align*}
\supnorm{h &\circ \left( f_1 \circ (A_c + r)^{-1} \right)  - h \circ \left( f_2 \circ (A_c + \tilde{r})^{-1} \right)} \\
		&\qquad \qquad \le \supnorm{h -h } + \supnorm{Dh}\supnorm{f_1 \circ (A_c + r)^{-1}  - f_2 \circ (A_c + \tilde{r})^{-1}} \\
		&\qquad \qquad \le \supnorm{Dh} \left(  \supnorm{f_1 - f_2} + \supnorm{Df_2} \supnorm{(A_c + r)^{-1} - (A_c + \tilde{r})^{-1}} \right).
\end{align*}
Then we use \cref{DifferenceInverse1} to estimate $(A_c + r)^{-1} - (A_c + \tilde{r})^{-1}$:
\begin{align*}
\supnorm{h \circ &\left( f_1 \circ (A_c + r)^{-1} \right) - h \circ \left( f_2 \circ (A_c + \tilde{r})^{-1} \right)} \\
		&\qquad \qquad  \le \supnorm{Dh} \left(  \supnorm{f_1 - f_2} + \supnorm{Df_2} \supnorm{D(A_c + r)^{-1}} \supnorm{r - \tilde{r}} \right).
\end{align*}
Recall from \cref{SmallnessConditions} that $L_r < \operatornorm{A_c^{-1}}^{-1}$ implies that $\supnorm{D(A_c + r)^{-1}} \le L_{-1}$, which proves the first estimate. 

\textit{ii)} To prove the $C^1$-estimate, we follow the same steps using \cref{DifferenceC0Norm2,DifferenceInverse2} instead of \cref{DifferenceC0Norm1,DifferenceInverse1} respectively.  We then use the estimates $\supnorm{D^2 \tilde{r}} \le \delta(\varepsilon)$, $\supnorm{D^i f_1 - D^if_2} \le \onenorm{f_1 - f_2}$ and $\supnorm{D^i r - D^i \tilde{r}} \le \onenorm{r - \tilde{r}}$ for $i =1,2$. From this, the desired estimate follows.
\end{proof}
\begin{remark}\label{UniquenessInverseR} The assumptions on $r$ and $f_1$ can be weakened in part i). We only need the assumptions that $A_c + r$ is a homeomorphism and $f_1$ is continuous and bounded.
\end{remark}

\subsection{A contraction}

Following our proof scheme for \cref{MainTheorem}, which we described in \cref{ProofScheme}, we want to show  that our fixed point operator $\Theta^{} : \Gamma_1(\delta(\varepsilon)) \to \Gamma_1(\delta(\varepsilon))$, which is defined in \cref{FixedPointOperator}, has a $C^1$ fixed point. We note that in \cref{C1Continuity} we will impose an upper bound on the second derivatives of the nonlinearities $k_c : X_c \to X_c$ and $g: X \to X$, whereas we only assume boundedness of the second derivatives in \cref{MainTheorem}. However, we will see in \cref{SecondDerivativeBounded} that we can always find a scaling such that the second derivative is sufficiently small.

\begin{theorem}\label{C1Continuity} Assume that $L_g$ and $L_c$ are small in the sense of \cref{SmallnessConditions} for $n=2$. There exists an $\varepsilon_0 >0 $ such for all $\varepsilon < \varepsilon_0$ it holds that  if $\supnorm{D^2 g}, \supnorm{D^2k_c} \le  \varepsilon$, then  $\Theta^{} :\Gamma_1(\delta(\varepsilon)) \to \Gamma_1(\delta(\varepsilon))$ is a contraction with respect to the $C^1$ norm. 
\end{theorem}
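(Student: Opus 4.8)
The plan is to show that $\Theta$ is a contraction on $\Gamma_1(\delta(\varepsilon))$ in the $C^1$ norm by estimating each of the three components of $\Theta(\Lambda)-\Theta(\tilde\Lambda)$ separately, for $\Lambda=\tmatrix{r}{k_u}{k_s}$ and $\tilde\Lambda=\tmatrix{\tilde r}{\tilde k_u}{\tilde k_s}$ in $\Gamma_1(\delta(\varepsilon))$. Since $\onenorm{\Theta(\Lambda)-\Theta(\tilde\Lambda)}=\max_i\onenorm{\Theta_i(\Lambda)-\Theta_i(\tilde\Lambda)}$ by the choice of norm in \cref{DesiredPropertyNorm}, it suffices to bound each $\onenorm{\Theta_i(\Lambda)-\Theta_i(\tilde\Lambda)}$ by a constant strictly less than $1$ times $\onenorm{\Lambda-\tilde\Lambda}$. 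Throughout I write $K=\iota+\tmatrix{k_c}{k_u}{k_s}$, $\tilde K=\iota+\tmatrix{k_c}{\tilde k_u}{\tilde k_s}$, $R=A_c+r$, $\tilde R=A_c+\tilde r$, and note $\onenorm{K-\tilde K}\le\onenorm{\Lambda-\tilde\Lambda}$ since $k_c$ is shared.

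For the first component, $\Theta_1(\Lambda)-\Theta_1(\tilde\Lambda)=(g_c\circ K-g_c\circ\tilde K)-(k_c\circ R-k_c\circ\tilde R)$, and I apply \cref{DifferenceC0Norm} (both parts) with $f_1=g_1=g_c$, $f_2=K$, $g_2=\tilde K$ for the first difference and with $f_1=g_1=k_c$, $f_2=R$, $g_2=\tilde R$ for the second, using the bounds $\supnorm{Dg_c}\le L_g$, $\supnorm{D^2g_c}\le\varepsilon$, $\supnorm{Dk_c}\le L_c$, $\supnorm{D^2k_c}\le\varepsilon$, $\supnorm{D\tilde K}\le 1+L_c$, $\supnorm{D\tilde R}\le\operatornorm{A_c}+L_r$, together with $\supnorm{r-\tilde r},\supnorm{Dr-D\tilde r}\le\onenorm{\Lambda-\tilde\Lambda}$. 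Collecting, the leading (size-independent) contributions are $L_c+L_g=\theta_{2,1}$ times $\onenorm{\Lambda-\tilde\Lambda}$, plus terms proportional to $\varepsilon$. For the second component I do the same, pulling out $\operatornorm{A_u^{-1}}$ and using \cref{DifferenceC0Norm} on $g_u\circ K$ versus $g_u\circ\tilde K$ and on $k_u\circ R$ versus $\tilde k_u\circ\tilde R$; here the cross term $\supnorm{Dk_u}\le L_u$ and $\supnorm{D\tilde R}$ appear, and the leading constant is $\operatornorm{A_u^{-1}}((\operatornorm{A_c}+L_r)+L_u+L_g)$, which for $\tilde n=1$ is $\le\theta_{2,2}$ up to $\varepsilon$-corrections. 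For the third component, which involves the double composition $h\circ f\circ(A_c+r)^{-1}$ with $h=g_s$ or $h=\operatorname{Id}$ and $f=K$ or $f=k_s$, I invoke \cref{DifferenceComponentThree} directly (both parts), whose $C^0$ and $C^1$ estimates were designed for exactly this; the leading constant after pulling out $\operatornorm{A_s}$ and combining is $L_{-1}(\operatornorm{A_s}(1+L_{-1}L_s)+L_g(1+L_{-1}(1+L_c)))$, which for $\tilde n=1$ is $\le\theta_{2,3}$ up to $\varepsilon$-terms.

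In every component the estimate takes the form $\onenorm{\Theta_i(\Lambda)-\Theta_i(\tilde\Lambda)}\le(\theta_{2,i}+c_i\,\varepsilon)\onenorm{\Lambda-\tilde\Lambda}$ for constants $c_i$ depending only on $L_g$, $L_c$, and the operator norms of $A_c,A_u,A_s$ and their inverses (the $\varepsilon$ enters through the $\supnorm{D^2g}$ and $\supnorm{D^2k_c}$ bounds and through $\supnorm{D^2\tilde r}\le\delta(\varepsilon)$, $\supnorm{D^2\tilde K}\le\varepsilon+\delta(\varepsilon)$, both of which tend to $0$ with $\varepsilon$). Since by hypothesis $\theta_{2,i}<1$ for $i=1,2,3$, I choose $\varepsilon_0>0$ small enough that $\theta_{2,i}+c_i\varepsilon_0<1$ for all three $i$ simultaneously; then for every $\varepsilon<\varepsilon_0$ the map $\Theta$ is a contraction on $\Gamma_1(\delta(\varepsilon))$ with constant $\max_i(\theta_{2,i}+c_i\varepsilon)<1$. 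Note $\Theta$ indeed maps $\Gamma_1(\delta(\varepsilon))$ into itself by \cref{UniformSecondDerivativeBound}, so the statement is meaningful.

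The main obstacle is purely bookkeeping: assembling the $C^1$ estimate for the third component, because \cref{DifferenceComponentThree2} produces several terms (one proportional to $\onenorm{f_1-f_2}$, two proportional to $\onenorm{r-\tilde r}$, one of which carries the extra factor $1+L_{-1}\delta(\varepsilon)$), and these must be split into the part recovering exactly $\theta_{2,3}$ and an $O(\varepsilon)$ remainder — care is needed to verify that the $\delta(\varepsilon)$-dependent piece is genuinely a vanishing perturbation and not part of $\theta_{2,3}$. A secondary subtlety is that the composition bounds require the inner maps $R,\tilde R$ to be diffeomorphisms with the inverse-derivative bound $\supnorm{DR^{-1}}\le L_{-1}$; this is guaranteed on $\Gamma_1(\delta(\varepsilon))\subset\Gamma_0$ by the discussion following \cref{GlobalDiffeomorphism} since $L_r<\operatornorm{A_c^{-1}}^{-1}$, so it is legitimate to apply \cref{DifferenceInverse,DifferenceComponentThree} here.
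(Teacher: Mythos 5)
Your proposal is correct and follows essentially the same route as the paper: component-wise estimates via \cref{DifferenceC0Norm}, \cref{DifferenceInverse} and \cref{DifferenceComponentThree}, yielding a leading constant plus an $O(\varepsilon)$ remainder (through $\supnorm{D^2g}$, $\supnorm{D^2k_c}$ and $\delta(\varepsilon)$), and then shrinking $\varepsilon_0$. One cosmetic imprecision: the leading constants you obtain are really $\theta_{0,i}$ (for the $C^0$ part) and $\theta_{1,i}$ (for the derivative part), not $\theta_{2,i}$ --- the comparison $\theta_{1,i}\le\theta_{2,i}$ is not automatic since it would require $\operatornorm{A_c}+L_r\ge 1$ and $L_{-1}\ge 1$ --- but this is harmless because \cref{SmallnessConditions} for $n=2$ guarantees $\theta_{\tilde n,i}<1$ for all $\tilde n\le 2$, so every constant involved is strictly below $1$ anyway.
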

\begin{proof}
Let $\varepsilon > 0$ and $\supnorm{D^2 g}, \supnorm{D^2 k_c} \le \varepsilon$. Let $\Lambda = \tmatrix{r}{k_u}{k_s}, \tilde{\Lambda}= \tmatrix{\tilde{r}}{\tilde{k}_u}{\tilde{k}_s} \in \Gamma_1(\delta(\varepsilon))$. We denote $R = A_c + r$ and $\tilde{R} = A_c + \tilde{r}$.

Our proof that $\Theta^{}$ is a $C^1$-contraction is divided in three steps.
\begin{enumerate}[label=\Alph*)]
\item\label{C1Continuity1} We prove that $\Theta^{}$ is a contraction with respect to the $C^0$ norm, independent of $\varepsilon$.

\item\label{C1Continuity2} We show the existence of a constant $\theta_1(\varepsilon)$ such that 
\begin{align*}
\supnorm{D[\Theta^{}(\Lambda)] - D[\Theta^{}(\tilde{\Lambda})]} \le \theta_1(\varepsilon) \onenorm{\Lambda - \tilde{\Lambda}}.
\end{align*}

\item\label{C1Continuity3} We show that $\varepsilon > 0$ can be chosen so that $\theta_1(\varepsilon) <1$,  thus proving that $\Theta^{}$ is a contraction with respect to the $C^1$ norm. 
\end{enumerate}

\textbf{Step \cref{C1Continuity1}} We want to find $\theta_0 <1$ such that 
\begin{align*}
\supnorm{\Theta^{}(\Lambda) - \Theta^{}(\tilde{\Lambda})} \le \theta_0 \supnorm{\Lambda - \tilde{\Lambda}}.
\end{align*} 
Recall from equation \cref{FixedPointOperator} that
\begin{align*}
\Theta^{}( \Lambda) =  \Theta^{} \begin{pmatrix} r \\ k_u \\ k_s \end{pmatrix} = \begin{pmatrix*}[l]
			A_c k_c +g_c \circ K{}{} -k_c \circ (A_c + r)   \\
			A_u^{-1}k_u \circ (A_c + r)  - A_u^{-1} g_u \circ K{}{} \\
			A_s k_s\circ (A_c + r)^{-1} + g_s \circ K{}{} \circ (A_c +r)^{-1} 
			\end{pmatrix*}.
\end{align*}
We will find the contraction constant component-wise, i.e we will show that
\begin{align*}
\supnorm{\Theta^{}_i(\Lambda) - \Theta^{}_i(\tilde{\Lambda})} \le \theta_{0,i} \supnorm{\Lambda - \tilde{\Lambda}}
\end{align*} 
with $\theta_{0,i}$ given explicitly in equation \cref{ContractionConstants1,ContractionConstants2,ContractionConstants3} for $i=1,2,3$.

\underline{\textit{$r$-component:}} We start with
\begin{align}
\supnorm{ \Theta^{}_1 (\Lambda) - \Theta^{}_1(\tilde{\Lambda} ) } &\le \supnorm{g_c \circ K{}{} - g_c \circ \tilde{K{}{}}} + \supnorm{k_c \circ R - k_c \circ \tilde{R}} . \label{rComponentEqC0}
\end{align} 
By using \cref{DifferenceC0Norm1} we find the estimate
\begin{align}
\supnorm{g_c \circ K{}{}- g_c \circ \tilde{K{}{}}} &\le \supnorm{Dg_c} \supnorm{K{}{} - \tilde{K{}{}}} 
							\le L_g \supnorm{ K{}{} - \tilde{K{}{}}} 
							\le L_g \supnorm{\Lambda - \tilde{\Lambda}}. \label{rComponentEqC01}
\end{align}
Here we recall that $\supnorm{Dg_c} \le L_g$, which follows from assumption \cref{MainTheoremKnownBounds } of \cref{MainTheorem}. 
Likewise, we estimate
\begin{align}
\supnorm{k_c \circ R- k_c \circ \tilde{R}}	&\le \supnorm{Dk_c} \supnorm{R - \tilde{R}} = \supnorm{Dk_c} \supnorm{r - \tilde{r}} \le L_c \supnorm{\Lambda - \tilde{\Lambda}}. \label{rComponentEqC02}
\end{align}
Thus inequality \cref{rComponentEqC0} together with estimates  \cref{rComponentEqC01,rComponentEqC02} gives
\begin{align}
\supnorm{ \Theta^{}_1 (\Lambda) - \Theta^{}_1(\tilde{\Lambda})} &\le \left( L_g + L_c \right) \supnorm{\Lambda - \tilde{\Lambda}} =\theta_{0,1} \supnorm{\Lambda - \tilde{\Lambda}}. \label{ C1ContractionEquation1}
\end{align}

\underline{\textit{$k_u$-component:}} Similarly, we have
\begin{align}
\supnorm{ \Theta^{}_2 (\Lambda) - \Theta^{}_2 (\tilde{\Lambda}) } &\le \supnorm{A_u^{-1} \left( k_u \circ R -  \tilde{k}_u \circ \tilde{R} \right)} + \supnorm{A_u^{-1} \left( g_u \circ K{}{} - g_u \circ \tilde{K{}{}} \right)}  \nonumber \\
									&\le \operatornorm{A_u^{-1}} \left(  \supnorm{ k_u \circ R -  \tilde{k}_u \circ \tilde{R}} + \supnorm{g_u \circ K{}{} - g_u \circ \tilde{K{}{}}} \right). \label{uComponentEqC0}
\end{align}
We again use  \cref{DifferenceC0Norm1}, which gives
\begin{align*}
\supnorm{g_u \circ K{}{}- g_u \circ \tilde{K{}{}}} &\le \supnorm{Dg_u} \supnorm{K{}{} - \tilde{K{}{}}}  \le L_g \supnorm{ \Lambda - \tilde{\Lambda}}, \\
\supnorm{k_u \circ R - \tilde{k}_u \circ \tilde{R}} &\le  \supnorm{k_u - \tilde{k}_u}  + \supnorm{D\tilde{k}_u} \supnorm{R - \tilde{R}} \le (1 +  L_u ) \supnorm{\Lambda - \tilde{\Lambda}}. 
\end{align*}
Here we used that $\supnorm{D\tilde{k}_u} \le L_u$, which follows from the fact that $\Gamma_1(\delta(\varepsilon)) \subset \Gamma_0$, with the latter space defined before \cref{LemmaBanachSpace}. Thus inequality \cref{uComponentEqC0} becomes
\begin{align}
\supnorm{ \Theta^{}_2 (\Lambda) - \Theta^{}_2 (\tilde{\Lambda}) } & \le \operatornorm{A_u^{-1}} \left( 1 + L_{u} + L_g \right)\supnorm{\Lambda - \tilde{\Lambda}} =\theta_{0,2} \supnorm{\Lambda - \tilde{\Lambda}}. \label{ C1ContractionEquation2}
\end{align}

\underline{\textit{$k_s$-component:}} Let $T = \left(A_c + r \right)^{-1}$ and $\tilde{T}= (A_c + \tilde{r})^{-1}$, then we have
\begin{align}
\supnorm{ \Theta^{}_3 (\Lambda) - \Theta^{}_3( \tilde{\Lambda}) }  &\le \supnorm{A_s  \circ  k_s \circ T - A_c \circ \tilde{k}_s \circ \tilde{T}} + \supnorm{g_s \circ K{}{} \circ T - g_s \circ \tilde{K{}{}} \circ \tilde{T}}. \label{sComponentEqC0}
\end{align}
We use \cref{DifferenceComponentThree1}, where the condition $L_r < \operatornorm{A_c^{-1}}^{-1}$ is satisfied by \cref{SmallnessConditions}, to obtain
\begin{align*}
\supnorm{A_s  \circ  k_s \circ T - A_c \circ \tilde{k}_s \circ \tilde{T}} &\le \operatornorm{A_c} \left( \supnorm{k_s - \tilde{k}_s} + L_{-1} \supnorm{D\tilde{k}_s} \supnorm{r- \tilde{r}} \right) \\
				&\le \operatornorm{A_c} \left( 1 + L_{-1} L_s \right) \supnorm{\Lambda - \tilde{\Lambda}},\\
\supnorm{g_s \circ K{}{} \circ T - g_s \circ \tilde{K{}{}} \circ \tilde{T}} &\le \supnorm{Dg_s} \left( \supnorm{K{}{} - \tilde{K{}{}}} + L_{-1} \supnorm{D\tilde{K{}{}}} \supnorm{r- \tilde{r}} \right) \\
				&\le L_g \left( 1 + L_{-1} (1 + L_c ) \right) \supnorm{\Lambda - \tilde{\Lambda}}.
\end{align*}
We used $\supnorm{ D \tilde{K{}{}}} \le 1 + L_c$, see \cref{EstimateDk}, in the last estimate.
Thus inequality \cref{sComponentEqC0} becomes
\begin{align}
\supnorm{ \Theta^{}_3 (\Lambda) - \Theta^{}_3( \tilde{\Lambda}) } &\le  \left( \operatornorm{A_s} \left( 1 + L_s L_{-1} \right) +  L_g \left( 1 + L_{-1} \left( 1 + L_c \right) \right) \right) \supnorm{\Lambda - \tilde{\Lambda}} \nonumber \\
									&=  \theta_{0,3} \supnorm{ \Lambda - \tilde{\Lambda}}. \label{ C1ContractionEquation3}
\end{align}

\underline{\textit{Contraction constant:}} We can now estimate $\supnorm{\Theta^{}(\Lambda) - \Theta^{}(\tilde{\Lambda})}$ with inequalities \cref{ C1ContractionEquation1,, C1ContractionEquation3,, C1ContractionEquation2}. We obtain
\begin{align}
\supnorm{\Theta^{}(\Lambda) - \Theta^{}(\tilde{\Lambda})} &= \max_{i=1,2,3} \left\{ \supnorm{\Theta^{}_i(\Lambda) - \Theta^{}_i(\tilde{\Lambda})} \right\} \nonumber \\
										&\le \max_{i=1,2,3} \left\{ \theta_{0,i} \supnorm{\Lambda - \tilde{\Lambda}} \right\} \nonumber \\
										&= \theta_0 \supnorm{\Lambda - \tilde{\Lambda}}. \label{C1ContractionEq1}
\end{align}
Here we define $\theta_0 \isdef \max\left\{ \theta_{0,1}, \theta_{0,2}, \theta_{0,3} \right\}$.  Since it is assumed that \cref{SmallnessConditions} holds for $n=2$, we have $\theta_{0,i} < 1$ and thus $\theta_0 <1$. This implies that $\Theta^{}$ is a contraction with respect to the $C^0$ norm. \vspace{1 \baselineskip}

\textbf{Step \cref{C1Continuity2}} Analogous to step \cref{C1Continuity1}, we want to prove the component-wise inequality
\begin{align*}
\supnorm{ D[\Theta^{}_i(\Lambda)] - D[\Theta^{}_i(\tilde{\Lambda})]} \le \left( \theta_{1,i} + C_{1,i}(\varepsilon) \right) \onenorm{\Lambda - \tilde{\Lambda}},
\end{align*} 
with $\theta_{1,i}$ defined in \cref{ContractionConstants1,ContractionConstants2,ContractionConstants3} and $C_{1,i}$ defined below in the proof. We note that $\Lambda, \tilde{\Lambda} \in \Gamma_1(\delta(\varepsilon))$, so we have $\supnorm{D^2r}, \supnorm{D^2k_u}, \supnorm{D^2k_s} \le \delta(\varepsilon)$.

\underline{\textit{$r$-component:}} We start with
\begin{align}
\supnorm{D[\Theta^{}_1 (\Lambda)] &- D[\Theta^{}_1(\tilde{\Lambda})]} \nonumber \\
									&\le \supnorm{D[ g_c \circ K{}{}] - D[ g_c  \circ  \tilde{K{}{}}]} + \supnorm{D[ k_c \circ R ] - D[ k_c \circ \tilde{R}]}. \label{rComponentEqC1}
\end{align}
We infer from \cref{DifferenceC0Norm2} that
\begin{align}
\supnorm{D[ g_c \circ K{}{} ] - D[ g_c \circ \tilde{K{}{}}]}	&\le \supnorm{D\tilde{K{}{}}} \supnorm{D^2 g_c} \supnorm{K{}{} - \tilde{K{}{}}} + \supnorm{Dg_c} \supnorm{DK{}{} - D \tilde{K{}{}}} \nonumber \\
								&\le \left( \supnorm{D\tilde{K{}{}}} \supnorm{D^2g_c} + \supnorm{Dg_c}\right)  \onenorm{\Lambda - \tilde{\Lambda}}  \nonumber \\
								&\le \left( \left( 1 + L_c \right) \varepsilon + L_g \right) \onenorm{\Lambda - \tilde{\Lambda}},  \label{rComponentEqC11}
\end{align}
where we have used \cref{EstimateDk}. Likewise, we find the estimate
\begin{align}
\supnorm{D[ k_c \circ R ] - D[ k_c \circ \tilde{R} ]}	&\le \supnorm{D\tilde{R}} \supnorm{D^2k_c} \supnorm{r - \tilde{r}}+ \supnorm{Dk_c} \supnorm{Dr  - D\tilde{r}}  \nonumber \\
								&\le \left(  \left( \operatornorm{A_c} + L_{r} \right) \varepsilon + L_c\right) \onenorm{\Lambda - \tilde{\Lambda}}. \label{rComponentEqC12}
\end{align}
Thus inequality \cref{rComponentEqC1} together with estimates  \cref{rComponentEqC11,rComponentEqC12} gives
\begin{align}
\supnorm{D[\Theta^{}_1 (\Lambda)] - D[\Theta^{}_1(\tilde{\Lambda})]} &\le  \left( L_g + L_c +\left(  1 + L_c  +  \operatornorm{A_c} + L_{r}  \right)  \varepsilon \right) \onenorm{\Lambda - \tilde{\Lambda}} \nonumber \\
								&= \left( \theta_{1,1} + C_{1,1}(\varepsilon) \right) \onenorm{\Lambda - \tilde{\Lambda}}, \label{ C1ContractionEquation4}
\end{align}
where we define $C_{1,1}(\varepsilon) \isdef \left(  1 + L_c  +  \operatornorm{A_c} + L_{r}  \right)  \varepsilon$.

\underline{\textit{$k_u$-component:}} Similarly, we have
\begin{align}
\supnorm{D[\Theta^{}_2 (\Lambda)] - D[\Theta^{}_2(\tilde{\Lambda})]} 
									&\le \operatornorm{A_u^{-1}} \supnorm{D [ g_u \circ K{}{} ] - D[g_u \circ \tilde{K{}{}}]}  \nonumber \\
									&\quad + \operatornorm{A_u^{-1}}\supnorm{D[ k_u \circ R ] - D[  \tilde{k}_u \circ \tilde{R}]}. \label{uComponentEqC1}
\end{align}
Using \cref{DifferenceC0Norm2} we get 
\begin{align*}
\supnorm{D [ g_u \circ K{}{} ] - D[g_u \circ \tilde{K{}{}}]} 	&\le  \supnorm{D\tilde{K{}{}}} \supnorm{D^2 g_u} \supnorm{K{}{} - \tilde{K{}{}}}  + \supnorm{Dg_u} \supnorm{DK{}{} - D \tilde{K{}{}}}    \\
									&\le  \left( \left( 1 + L_c \right) \varepsilon + L_g \right) \onenorm{\Lambda - \tilde{\Lambda}}, \\
\supnorm{D[ k_u \circ R ] - D[  \tilde{k}_u \circ \tilde{R}]} &\le  \supnorm{D\tilde{R}} \left( \supnorm{D^2 \tilde{k}_u}  \supnorm{r - \tilde{r}}  + \supnorm{Dk_u - D\tilde{k}_u} \right)  \\
									&\quad +  \supnorm{D k_u}  \supnorm{Dr - D \tilde{r}}    \\
									&\le  \left( \left( \operatornorm{A_c} + L_r \right) ( 1 + \delta(\varepsilon) ) + L_u \right) \onenorm{\Lambda - \tilde{\Lambda}}. 
\end{align*} 
Thus inequality \cref{uComponentEqC1} becomes
\begin{align}
\supnorm{D\Theta^{}_2 (\Lambda) - D \Theta^{}_2(\tilde{\Lambda}) }	&\le \left( \theta_{1,2} + C_{1,2}(\varepsilon) \right) \onenorm{\Lambda - \tilde{\Lambda}}, \label{ C1ContractionEquation5}
\end{align}
where we define $C_{1,2}(\varepsilon) \isdef \operatornorm{A_u^{-1}} \left( L_g( 1 + L_c) \varepsilon + (\operatornorm{A_c} + L_r ) \delta(\varepsilon) \right)$.

\underline{\textit{$k_s$-component:}} Recall that $T = (A_c + r)^{-1}$ and $\tilde{T} = (A_c + \tilde{r})^{-1}$, then we have
\begin{align}
\supnorm{D[\Theta^{}_3 (\Lambda)] - D[\Theta^{}_3(\tilde{\Lambda})]} &\le\supnorm{D[A_s \circ k_s \circ T] - D[A_s \circ \tilde{k}_s \circ \tilde{T}]} \nonumber \\
									&\quad +  \supnorm{D[g_s \circ K{}{} \circ T] - D[g_s \circ \tilde{K{}{}} \circ \tilde{T}]}. \label{sComponentEqC1}
\end{align}
We will estimate both terms with \cref{DifferenceComponentThree2}. For the first term, we note that $\supnorm{DA_s} = \operatornorm{A_s}$ and $\supnorm{D^2A_s} = 0$, which gives us
\begin{align}
\supnorm{D[A_s \circ k_s \circ T] &- D[A_s \circ \tilde{k}_s \circ \tilde{T}]} 	\nonumber \\
									&\qquad \le L_{-1} \operatornorm{A_s}  \onenorm{k_s - \tilde{k}_s} + L_{-1}^2  \operatornorm{A_s} \supnorm{D^2 \tilde{k}_s}  \onenorm{r - \tilde{r}}  \nonumber \\
									&\qquad \quad +L_{-1}^2 \operatornorm{A_s} \supnorm{Dk_s} ( 1 + L_{-1} \delta(\varepsilon)) \onenorm{r - \tilde{r}} \nonumber \\
									&\qquad \le \operatornorm{A_s} L_{-1} \left(1 + L_{-1}L_s \right) \onenorm{\Lambda - \tilde{\Lambda}} \label{sComponentEqTheta1} \\
									&\qquad \quad + \operatornorm{A_s} L_{-1}^2 \delta(\varepsilon) \left(1  +L_{-1} L_s  \right) \onenorm{\Lambda - \tilde{\Lambda}}, \label{sComponentEqThetaGamma1}
\end{align}
where we grouped the terms with and without a factor $\delta(\varepsilon)$. The second term in \cref{sComponentEqC1} involves the first and second derivative of $\tilde{K{}{}}$. We estimate the first derivate again with $1 + L_c$ and we estimate the second derivative with  $\supnorm{D^2 \tilde{K{}{}}} = \max\{ \supnorm{D^2 k_c}, \supnorm{D^2\tilde{k}_u}, \supnorm{D^2\tilde{k}_s} \} \le \max\{ \varepsilon , \delta(\varepsilon) \} \isfed \gamma(\varepsilon)$. Hence we obtain
\begin{align}
\supnorm{D[g_s \circ K{}{} \circ T] &- D[g_s \circ \tilde{K{}{}} \circ \tilde{T}]}  \nonumber \\
									&\qquad \le L_{-1} \left( \supnorm{Dg_s} + \supnorm{D^2 g_s} \supnorm{D \tilde{K{}{}}} \right) \onenorm{K{}{} - \tilde{K{}{}}} \nonumber \\
									&\qquad \quad + L_{-1}^2 \left( \supnorm{D^2 g_s} \supnorm{D \tilde{K{}{}}}^2 + \supnorm{Dg_s} \supnorm{D^2 \tilde{K{}{}}} \right) \onenorm{r - \tilde{r}}  \nonumber \\
									&\qquad \quad +L_{-1}^2 \supnorm{Dg_s} \supnorm{DK{}{}} ( 1 + L_{-1} \delta(\varepsilon)) \onenorm{r - \tilde{r}} \nonumber \\
									&\qquad \le L_{-1}\left( L_g + L_{-1} L_g (1 + L_c )  \right) \onenorm{\Lambda - \tilde{\Lambda}} \label{sComponentEqTheta2} \\
									&\qquad \quad + L_{-1} \left( (1+ L_c) \varepsilon + L_{-1}(1 + L_c)^2 \varepsilon  \right)  \onenorm{\Lambda - \tilde{\Lambda}} \nonumber  \\
									&\qquad \quad + L_{-1}^2  \left( L_g \gamma(\varepsilon) + L_{-1} L_g (1 + L_c) \delta(\varepsilon) \right)  \onenorm{\Lambda - \tilde{\Lambda}}. \label{sComponentEqThetaGamma2}
\end{align}
Here we again grouped the terms with and without $\varepsilon$. We see that \cref{sComponentEqTheta1,sComponentEqTheta2} together are $\theta_{1,3} \onenorm{\Lambda - \tilde{\Lambda}}$. Likewise, we can estimate \cref{sComponentEqThetaGamma1,sComponentEqThetaGamma2} together by $\theta_{2,3} \gamma(\varepsilon) \onenorm{\Lambda - \tilde{\Lambda}}$ as $\delta(\varepsilon) \le \gamma(\varepsilon)$. Then inequality \cref{sComponentEqC1} reduces to
\begin{align}
\supnorm{D\Theta^{}_3 (\Lambda) - D \Theta^{}_3(\tilde{\Lambda}) }	&\le \left( \theta_{1,3} + C_{1,3}(\varepsilon) \right) \onenorm{\Lambda - \tilde{\Lambda}}, \label{ C1ContractionEquation6}
\end{align}
where we define $C_{1,3}(\varepsilon) \isdef \theta_{2,3}\gamma(\varepsilon)+ L_{-1}(1+ L_c)\varepsilon + L_{-1}^2(1 + L_c )^2 \varepsilon$.

\underline{\textit{Lipschitz constant:}} Inequalities \cref{ C1ContractionEquation4,, C1ContractionEquation5,, C1ContractionEquation6} give
\begin{align}
\supnorm{D[\Theta^{}(\Lambda)] - D[\Theta^{}(\tilde{\Lambda})]} &= \max_{i=1,2,3} \left\{ \supnorm{D[\Theta^{}_i(\Lambda)] - D[\Theta^{}_i(\tilde{\Lambda})]} \right\} \nonumber \\
										&\le \max_{i=1,2,3} \left\{ \left( \theta_{1,i} + C_{1,i}(\varepsilon) \right) \onenorm{\Lambda - \tilde{\Lambda}} \right\} \nonumber \\
										&\le \theta_1(\varepsilon) \onenorm{\Lambda - \tilde{\Lambda}}. \label{C1ContractionEq2}
\end{align}
Here we define 
\begin{align}
\theta_1(\varepsilon) \isdef \max\left\{ \theta_{1,1}, \theta_{1,2}, \theta_{1,3} \right\} + \max\{C_{1,1}(\varepsilon), C_{1,2}(\varepsilon),C_{1,3}(\varepsilon)\}. \label{DefinitionTheta1}
\end{align}

\textbf{Step \cref{C1Continuity3}} From \cref{SmallnessConditions} it follows that 
\begin{align*}
\max\left\{ \theta_{1,1}, \theta_{1,2}, \theta_{1,3} \right\} < 1.
\end{align*}
As $\delta(\varepsilon) \downarrow 0$ and thus also $\gamma(\varepsilon) \downarrow 0$ when $\varepsilon \downarrow 0$, see \cref{UniformSecondDerivativeBound}, we have
\begin{align*}
\lim_{\varepsilon \to 0} \max\{C_{1,1}(\varepsilon), C_{1,2}(\varepsilon),C_{1,3}(\varepsilon)\} = 0.
\end{align*}
We infer that
\begin{align*}
\lim_{\varepsilon \to 0} \theta_1(\varepsilon) = \max\left\{ \theta_{1,1}, \theta_{1,2}, \theta_{1,3} \right\} < 1.
\end{align*}
Hence, we can find an $\varepsilon_0 > 0 $ such that $\theta_1(\varepsilon) < 1$ for all $\varepsilon < \varepsilon_0$. Then estimates  \cref{C1ContractionEq1,C1ContractionEq2} imply that
\begin{align*}
\onenorm{\Theta^{}(\Lambda) - \Theta^{}(\tilde{\Lambda})} &= \max\left\{ \supnorm{\Theta^{}(\Lambda) - \Theta^{}(\tilde{\Lambda})}, \supnorm{D[\Theta^{}(\Lambda)] - D[\Theta^{}(\tilde{\Lambda})]} \right\} \\
											&\le \max \left\{ \theta_0 \supnorm{\Lambda - \tilde{\Lambda}}, \theta_1(\varepsilon) \onenorm{\Lambda - \tilde{\Lambda}} \right\} \\
											&\le \lambda_1 \onenorm{\Lambda - \tilde{\Lambda}}.
\end{align*}
We define the contraction constant $\lambda_1 \isdef \max\{\theta_0 , \theta_1(\varepsilon) \}$, which is smaller than $1$ for $\varepsilon < \varepsilon_0$ by our previous discussion. We conclude that $\Theta^{} : \Gamma_1(\delta(\varepsilon)) \to \Gamma_1(\delta(\varepsilon))$ is a contraction with respect to the $C^1$ norm for $\varepsilon < \varepsilon_0$.
\end{proof}

We can now prove the existence of a $C^1$ center manifold under the assumption that the second derivative of $k_c$ and $g$ are small enough. As we will see in \cref{SecondDerivativeBounded}, we can always find a scaling such that these second derivatives will be sufficiently small.

\begin{corollary}\label{C1Existence}
Let $\varepsilon >0$ be such that $F{}{} :X \to X$ satisfies the assumptions of \cref{MainTheorem,C1Continuity}. Then the conclusion of \cref{MainTheorem} holds for $K{}{} \in C^1(X_c,X)$ and $r \in C^1(X_c,X_c)$. In particular, the image of $K{}{}$ is a $C^1$ center manifold for $F{}{}$.
\end{corollary}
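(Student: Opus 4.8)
The plan is to assemble the pieces already established in \cref{C1Section} into a straightforward fixed-point argument. By \cref{C1Continuity}, for $\varepsilon < \varepsilon_0$ the operator $\Theta$ maps the set $\Gamma_1(\delta(\varepsilon))$ into itself (invariance was shown in \cref{UniformSecondDerivativeBound}) and is a contraction with respect to the $C^1$ norm, with contraction constant $\lambda_1 < 1$. So I first want to check that $\Gamma_1(\delta(\varepsilon))$, equipped with the $C^1$ norm, is a \emph{complete} metric space. It is a closed subset of $C^1_b(X_c,X)$: the conditions defining $\Gamma_0$ (namely $\Lambda(0)=0$, $D\Lambda(0)=0$, and the sup-norm bounds $\supnorm{Dr}\le L_r$, $\supnorm{Dk_u}\le L_u$, $\supnorm{Dk_s}\le L_s$) are all closed under $C^1$-convergence, and the extra second-derivative bounds $\supnorm{D^2 r},\supnorm{D^2 k_u},\supnorm{D^2 k_s}\le\delta(\varepsilon)$ are preserved under $C^1$ limits as well — if $\Lambda_j\to\Lambda$ in $C^1$ and each $\Lambda_j$ has second derivative bounded by $\delta(\varepsilon)$, then the $C^1$ limit $\Lambda$ is still $C^2$ with the same bound, because a uniform Lipschitz bound on $D\Lambda_j$ passes to the limit and, on a Banach space, a globally Lipschitz $C^1$ map with the Lipschitz constant realized as a uniform bound is actually $C^2$ with $\supnorm{D^2\Lambda}\le\delta(\varepsilon)$ (this is essentially the content of the discussion preceding \cref{DifferenceC0Norm}, where the authors explain their preference for $C^2$ functions with a uniform second-derivative bound over $C^1$ functions with Lipschitz derivative). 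Hence $\Gamma_1(\delta(\varepsilon))$ is complete in the $C^1$ metric.

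Next I apply the Banach fixed-point theorem to $\Theta:\Gamma_1(\delta(\varepsilon))\to\Gamma_1(\delta(\varepsilon))$, obtaining a unique fixed point $\Lambda = \tmatrix{r}{k_u}{k_s}\in\Gamma_1(\delta(\varepsilon))$. In particular $r$, $k_u$, $k_s$ are (at least) $C^1$, satisfy $r(0)=k_u(0)=k_s(0)=0$ and $Dr(0)=Dk_u(0)=Dk_s(0)=0$, and obey the derivative bounds $\supnorm{Dr}\le L_r$, $\supnorm{Dk_u}\le L_u$, $\supnorm{Dk_s}\le L_s$. Since $\supnorm{Dr}\le L_r < \operatornorm{A_c^{-1}}^{-1}$ (by \cref{SmallnessConditions}), \cref{GlobalDiffeomorphism1} guarantees that $R = A_c + r$ is a global diffeomorphism, so the hypothesis of \cref{FixedPointOperatorProposition} is met. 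Then \cref{FixedPointOperatorProposition} tells us that being a fixed point of $\Theta$ is equivalent to the conjugacy equation \cref{ConjugacyEquation} holding for $K{}{} = \iota + \tmatrix{k_c}{k_u}{k_s}$ and $R = A_c + r$. This establishes the conjugacy relation \cref{ConjugacyEquation} with all the stated $C^1$ properties of $K{}{}$ and $r$, i.e.\ the $C^1$ part of properties \cref{PropertiesR} and \cref{PropertiesK} — the invertibility of $R$ with inverse $A_c^{-1} + t$ and the bound $\supnorm{Dt}\le L_t$ come from the discussion at the end of \cref{NormInverse} combined with \cref{GlobalDiffeomorphism2}.

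Finally, for the last sentence of the corollary, I note that $K{}{}(X_c)$ is a $C^1$ submanifold of $X$ because $K{}{}$ is a $C^1$ immersion: $DK{}{}(0) = \tmatrix{\operatorname{Id}}{0}{0}$ is injective with closed complemented range, and by continuity $DK{}{}(x)$ remains injective with complemented range near $0$; more globally, the bounds $\supnorm{Dk_c}\le L_c$, $\supnorm{Dk_u}\le L_u$, $\supnorm{Dk_s}\le L_s$ with $L_c,L_u,L_s$ small ensure $DK{}{}(x)$ is a small perturbation of $\tmatrix{\operatorname{Id}}{0}{0}$ uniformly in $x$, hence everywhere an injective map onto a complemented subspace, so $K{}{}$ is a $C^1$ embedding and its image is a $C^1$ manifold. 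Invariance of $K{}{}(X_c)$ under $F{}{}$ is immediate from \cref{ConjugacyEquation}: $F{}{}(K{}{}(x)) = K{}{}(R(x)) \in K{}{}(X_c)$. So $K{}{}(X_c)$ is a $C^1$ center manifold for $F{}{}$.

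\textbf{Main obstacle.} The only non-routine point is the completeness of $\Gamma_1(\delta(\varepsilon))$ in the $C^1$ metric — specifically, arguing that the uniform $C^2$ bound survives passage to a $C^1$ limit, so that the limit stays inside $\Gamma_1(\delta(\varepsilon))$ rather than merely in the closure of $\Gamma_0$. Everything else is bookkeeping: invoking \cref{C1Continuity}, the Banach fixed-point theorem, \cref{FixedPointOperatorProposition}, and \cref{GlobalDiffeomorphism} in sequence. The immersion/embedding claim for $K{}{}$ is also routine given the smallness of the $L$-constants, but it is worth stating the argument explicitly since it is the only place the geometric (as opposed to dynamical) content of "center manifold" enters.
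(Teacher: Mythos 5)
Your overall architecture matches the paper's: contraction on $\Gamma_1(\delta(\varepsilon))$, Banach fixed point theorem, \cref{FixedPointOperatorProposition} to convert the fixed point into the conjugacy, \cref{GlobalDiffeomorphism} and the discussion in \cref{NormInverse} for the invertibility of $R$, and the definition of $\Gamma_0$ for properties \cref{PropertiesR} and \cref{PropertiesK}. The extra paragraph showing $K$ is an embedding is more detail than the paper gives and is fine.

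However, the step you single out as the ``only non-routine point'' contains a genuine error. You claim that $\Gamma_1(\delta(\varepsilon))$ is complete in the $C^1$ metric because a $C^1$ limit of functions with $\supnorm{D^2\Lambda_j}\le\delta(\varepsilon)$ is again $C^2$ with the same bound. This is false: such a limit has a Lipschitz first derivative with Lipschitz constant $\delta(\varepsilon)$, but a Lipschitz derivative need not be differentiable everywhere, let alone continuously so. A concrete counterexample on $\mathbb{R}$ is $f(x)=\tfrac12 x|x|$, which is the $C^1$ limit of smooth mollifications $f_j$ with $\supnorm{f_j''}\le 1$, yet $f'(x)=|x|$ is not differentiable at $0$, so $f\notin C^2$. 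The paper itself states the opposite of your claim at the start of \cref{C2ContinuitySection}: ``$\Theta$ is a contraction with respect to the $C^1$ norm on $\Gamma_1(\delta(\varepsilon))$, a set that is \emph{not} closed with respect to the $C^1$ norm,'' and the fixed point is only asserted to lie in the $C^1$-closure of $\Gamma_1(\delta(\varepsilon))$, which is contained in $\Gamma_0$. Indeed, this is precisely why the paper needs the separate bootstrap argument of \cref{C2ContinuitySection} to recover $C^2$ regularity of the fixed point; if your completeness claim were true, that entire section would be redundant.

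The damage to the corollary itself is repairable: run the Picard iteration from any point of $\Gamma_1(\delta(\varepsilon))$; the iterates are $C^1$-Cauchy by the contraction estimate, hence converge in $C^1_b(X_c,X)$ to some $\Lambda$ lying in the $C^1$-closure of $\Gamma_1(\delta(\varepsilon))$, which is contained in the $C^1$-closed set $\Gamma_0$; continuity of $\Theta$ in the $C^0$ norm (step A of \cref{C1Continuity}, whose estimates only use the $\Gamma_0$ bounds) then shows $\Lambda$ is a fixed point. Since $\Gamma_0\subset C^1_b$, the conclusion $K,r\in C^1$ and all the stated first-derivative bounds survive, and the rest of your argument goes through unchanged. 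But you should not assert that the fixed point lies in $\Gamma_1(\delta(\varepsilon))$ or that $r,k_u,k_s$ are $C^2$ at this stage.
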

\begin{proof}
By assumption, $\varepsilon > 0$ is such that $\Theta^{}$ is a contraction. In \cref{C1Continuity,FixedPointOperatorProposition} we proved the existence of a conjugacy $K{}{}$ and conjugate dynamics $A_c + r$. Furthermore, from the definition of $\Gamma_0$, it follows that $K{}{}$ and $r$ satisfy the properties \cref{PropertiesK} and \cref{PropertiesR} respectively. In particular, it follows that image of $K{}{}$ is invariant under $F{}{}$ and tangent to $X_c$ at $0$, hence the image of $K{}{}$ is a $C^1$ center manifold for $F{}{}$.
\end{proof}

\section{A \texorpdfstring{$C^2$}{C2} center manifold}\label{C2ContinuitySection}

Now that we have a $C^1$ conjugacy, the third step in our proof scheme in \cref{ProofScheme} is showing that the conjugacy is $C^2$. We will prove the equivalent statement that the derivative of the $C^1$ conjugacy is also $C^1$. For this, we define another fixed point operator acting on $C^1$ functions, and show that its fixed point is the derivative of the conjugacy from \cref{C1Existence}.

\subsection{A new fixed point operator}

We first note that $\Theta^{}$ is a contraction with respect to the $C^1$ norm on $\Gamma_1(\delta(\varepsilon))$, a set that is not closed with respect to the $C^1$ norm. That means that the fixed point of $\Theta^{}$ lies in the $C^1$ closure of $\Gamma_1(\delta(\varepsilon))$, which is enclosed by $\Gamma_0$. 

Let $\Lambda = \tmatrix{r}{k_u}{k_s} \in \Gamma_0$ denote any fixed point of $\Theta^{}$, i.e. $\Lambda$ consists of three $C^1$ functions and we have
\begin{align*}
\begin{pmatrix}r \\ k_s \\ k_s \end{pmatrix} = \Lambda = \Theta^{}(\Lambda) = \begin{pmatrix*}[l]
	A_c k_c +g_c \circ K{}{} -k_c \circ (A_c +r)  \\
	A_u^{-1}k_u \circ (A_c + r)  - A_u^{-1} g_u \circ K{}{} \\
	 A_s k_s\circ (A_c + r)^{-1} + g_s \circ K{}{} \circ (A_c +r)^{-1}
\end{pmatrix*}.
\end{align*}
We can therefore take the derivative at both sides of the equation, which gives
\begin{align}
\begin{pmatrix} Dr \\ Dk_u \\ Dk_s \end{pmatrix}
 &= \begin{pmatrix*}[l] A_c  Dk_c + Dg_c(K{}{})  DK{}{} - Dk_c\left(R\right)  DR \\
-A_u^{-1}  Dg_u(K{}{})  DK{}{} + A_u^{-1}  Dk_u(R)  DR \\
 A_s  Dk_s(T)DT  + Dg_s(K{}{} \circ T)  DK{}{}(T)  DT
 \end{pmatrix*}, \label{FixedPointOperatorIntroduction}
\end{align}
where we define $R \isdef A_c + r$ and $T \isdef (A_c + r)^{-1}$, notation that we will use throughout the rest of the paper. To express $DT = D(A_c + R)^{-1}$ in terms of $r$ and $Dr$, we use the Inverse Function Theorem and write 
\begin{align*}
DT(x) = D(A_c + r)^{-1}(x) = \left( D R(R^{-1}(x)) \right)^{-1} = \left( DR(T(x)) \right)^{-1}.
\end{align*}
This motivates us to introduce for $\rho : X_c \to \mathcal{L}(X_c,X_c)$ the functions
\begin{align}
\begin{aligned}
P_\rho :\ &X_c \to \mathcal{L}(X_c , X_c)  \\  
		&x \mapsto A_c + \rho(x)
\end{aligned} && \text{and} &&
\begin{aligned}
Q_\rho : \  & X_c \mapsto \mathcal{L}(X_c,X_c) \\ 
&  x \mapsto (P_\rho (T(x)))^{-1}
\end{aligned} \label{FunctionsFixedPointOperatorC2}
\end{align}
so that we can write $DT(x) = Q_{Dr}(x)$ and $DR(x) = P_{Dr}(x)$. In view of \cref{FixedPointOperatorIntroduction} we use these functions to introduce the fixed point operator
\begin{align}
\Theta^{[2]} : \begin{pmatrix} \rho \\ \kappa_u \\ \kappa_s \end{pmatrix} \mapsto
\begin{pmatrix*}[l]
A_c  Dk_c + Dg_c(K{}{})  \kappa - Dk_c\left(R\right)  P_\rho \\
-A_u^{-1}  Dg_u(K{}{})  \kappa + A_u^{-1}  \kappa_u(R)  P_\rho \\
A_s  \kappa_s(T)  Q_\rho  + Dg_s(K{}{} \circ T)  \kappa(T)  Q_\rho
\end{pmatrix*} \label{FixedPointOperatorC2Equation}
\end{align}
where $\kappa = \tmatrix{\operatorname{Id} + Dk_c}{\kappa_u}{\kappa_s}$ and $\tmatrix{r}{k_u}{k_s}\in \Gamma_0$ is a fixed point of $\Theta^{}$. To summarize, we have the following proposition:

\begin{proposition}\label{FixedPointOperatorC2} Let $\Lambda \in \Gamma_0$ be any fixed point of the operator $\Theta^{}$. Then $D \Lambda$ is a fixed point of the operator $\Theta^{[2]}$ defined in \cref{FixedPointOperatorC2Equation}.
\end{proposition}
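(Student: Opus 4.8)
### Proof proposal

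The plan is to verify directly that if $\Lambda = \tmatrix{r}{k_u}{k_s} \in \Gamma_0$ is a fixed point of $\Theta$, then $D\Lambda = \tmatrix{Dr}{Dk_u}{Dk_s}$ satisfies $\Theta^{[2]}(D\Lambda) = D\Lambda$. Since $\Lambda \in \Gamma_0$ means each component is $C^1$ with $\supnorm{Dr} \le L_r < \operatornorm{A_c^{-1}}^{-1}$, \cref{GlobalDiffeomorphism1} guarantees that $R = A_c + r$ is a global diffeomorphism, so $T = R^{-1}$ is well-defined; moreover \cref{GlobalDiffeomorphism2} gives $T = A_c^{-1} + t$ with $t$ bounded, and the discussion following \cref{GlobalDiffeomorphism} shows $T$ is $C^1$ with $\supnorm{Dt} \le L_t$. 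Hence all the objects appearing in $\Theta^{[2]}$ — in particular $P_{Dr}$, $Q_{Dr}$, and the composition $K \circ T$ — are well-defined and continuous, so there is no domain issue to worry about.

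The core of the argument is simply to differentiate the fixed-point identity $\Lambda = \Theta(\Lambda)$ component by component using the chain rule. Writing out $\Theta$ as in \cref{FixedPointOperator}, the three equations are $r = A_c k_c + g_c \circ K - k_c \circ R$, $k_u = A_u^{-1} k_u \circ R - A_u^{-1} g_u \circ K$, and $k_s = A_s k_s \circ T + g_s \circ K \circ T$. Applying $D$ and the chain rule yields exactly \cref{FixedPointOperatorIntroduction}, provided one justifies differentiating the third equation, which requires knowing that $T$ is $C^1$ (available from the preceding subsection) and that $g_s \circ K \circ T$ is therefore $C^1$ as a composition of $C^1$ maps. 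Then one substitutes $DK = \tmatrix{\operatorname{Id} + Dk_c}{Dk_u}{Dk_s} = \kappa$ (with $\kappa_u = Dk_u$, $\kappa_s = Dk_s$), $DR = A_c + Dr = P_{Dr}$, and — via the Inverse Function Theorem — $DT(x) = (DR(T(x)))^{-1} = (P_{Dr}(T(x)))^{-1} = Q_{Dr}(x)$. Matching these substitutions against the definition \cref{FixedPointOperatorC2Equation} of $\Theta^{[2]}$ term by term shows that $D\Lambda$ is precisely $\Theta^{[2]}(D\Lambda)$, since $\Theta^{[2]}$ was constructed by reading off the right-hand side of \cref{FixedPointOperatorIntroduction} with $\rho = Dr$, $\kappa_u = Dk_u$, $\kappa_s = Dk_s$.

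The only genuinely delicate point is the differentiation of the third component, $k_s = A_s k_s \circ T + g_s \circ K \circ T$: one must confirm that $T = (A_c + r)^{-1}$ is continuously differentiable before the chain rule applies, and that $DT = Q_{Dr}$ in the sense of the functions defined in \cref{FunctionsFixedPointOperatorC2}. Both facts are exactly what was established in \cref{NormInverse} (global diffeomorphism, $C^1$ inverse with explicit derivative via the Inverse Function Theorem), so this reduces to citing that material; the remaining steps are routine applications of the chain rule and bookkeeping to match notation. I would therefore present the proof as: (1) invoke \cref{GlobalDiffeomorphism} to get $R$ a $C^1$-diffeomorphism with $C^1$ inverse $T$, so all maps in sight are $C^1$; (2) differentiate the three scalar fixed-point equations, obtaining \cref{FixedPointOperatorIntroduction}; (3) rewrite $DK$, $DR$, $DT$ in terms of $\kappa$, $P_{Dr}$, $Q_{Dr}$ and observe the result is verbatim the statement $\Theta^{[2]}(D\Lambda) = D\Lambda$.
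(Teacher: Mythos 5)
Your proposal is correct and follows essentially the same route as the paper: the paper's proof is exactly the derivation preceding the proposition, namely differentiating the fixed-point identity $\Lambda = \Theta(\Lambda)$ component-wise and using the Inverse Function Theorem to write $DT = Q_{Dr}$, then reading off the definition of $\Theta^{[2]}$. Your extra care in citing \cref{NormInverse} to justify that $T$ is $C^1$ before applying the chain rule to the third component is a reasonable elaboration of what the paper leaves implicit.
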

\begin{proof}
This follows immediately from the above discussion.
\end{proof}

We want to use $\Theta^{[2]}$ to show that $D \Lambda$ is $C^1$ instead of only $C^0$. To this end, we want to show that $\Theta^{[2]}$ is a contraction in $C^1$ on a suitable set of $C^1$ functions, and show that its fixed point in this set is $D\Lambda$. We therefore want to restrict $\Theta^{[2]}$ to a space similar to $\Gamma_1(\delta(\varepsilon))$. In particular, we want to reflect that $\Theta^{}$ is a fixed point operator acting on functions and $\Theta^{[2]}$ is a fixed point operator acting on derivatives. So where functions in $\Gamma_1(\delta(\varepsilon))$ have restrictions on the first and second derivative, we want the same restrictions on the function and its derivative in our new space respectively. Therefore, let $\delta >0$, and define the set
\begin{align}
 \Gamma_2(\delta) \isdef \left\{ \mathcal{M} = \begin{pmatrix} \rho \\ \kappa_u \\ \kappa_s \end{pmatrix} \in C^1_b \left(X_c, \mathcal{L}(X_c , X)  \right) \ \middle| \   \begin{matrix*}[l] 
\mathcal{M}(0) = 0, \\
\supnorm{ \rho } \le L_{r} \\ 
\supnorm{\kappa_u}\le  L_{u}  \\ 
\supnorm{  \kappa_s } \le  L_{s} \\
 \supnorm{ D\mathcal{M} } \le \delta   \\
\end{matrix*}  \right\}. \label{BanachSpaceC2}
\end{align}

\begin{proposition}
Let $\varepsilon > 0$ and assume that $\supnorm{D^2g}, \supnorm{D^2k_c} \le \varepsilon$. Furthermore, assume that $L_g$ and $L_c$ are small in the sense of \cref{SmallnessConditions} for $n=2$. Then, for $\delta(\varepsilon) > 0$ from \cref{UniformSecondDerivativeBound}, the set $\Gamma_2(\delta(\varepsilon))$ is invariant under $\Theta^{[2]}$.
\end{proposition}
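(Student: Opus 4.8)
The plan is to follow, almost verbatim, the two invariance arguments already carried out for $\Theta^{}$ in \cref{LemmaBanachSpace,UniformSecondDerivativeBound}, using that $\Theta^{[2]}$ is built to be the ``formal derivative'' of $\Theta^{}$: the role of the bounds $\supnorm{Dr},\supnorm{Dk_u},\supnorm{Dk_s}$ defining $\Gamma_0$ is taken over by the bounds $\supnorm{\rho},\supnorm{\kappa_u},\supnorm{\kappa_s}$ defining $\Gamma_2$, and the role of the bounds $\supnorm{D^2r},\supnorm{D^2k_u},\supnorm{D^2k_s}$ defining $\Gamma_1(\delta(\varepsilon))$ is taken over by $\supnorm{D\rho},\supnorm{D\kappa_u},\supnorm{D\kappa_s}$. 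Fix $\mathcal{M}=\tmatrix{\rho}{\kappa_u}{\kappa_s}\in\Gamma_2(\delta(\varepsilon))$ and write $\kappa=\tmatrix{\operatorname{Id}+Dk_c}{\kappa_u}{\kappa_s}$; recall that $\Theta^{[2]}$ is defined relative to a fixed point $\tmatrix{r}{k_u}{k_s}\in\Gamma_0$ of $\Theta^{}$, so that $R=A_c+r$, $T=(A_c+r)^{-1}$ and $K{}{}=\iota+\tmatrix{k_c}{k_u}{k_s}$ are $C^1$. First I would note that $\Theta^{[2]}$ is well defined on $\Gamma_2(\delta(\varepsilon))$: since $\supnorm{\rho}\le L_r<\operatornorm{A_c^{-1}}^{-1}$ by \cref{SmallnessConditions}, every $A_c+\rho(y)$ is invertible with $\operatornorm{(A_c+\rho(y))^{-1}}\le L_{-1}$ by a Neumann series, so $Q_\rho$ is well defined, and $Q_\rho\in C^1_b(X_c,\mathcal{L}(X_c,X_c))$ because $T$ is $C^1$ by \cref{NormInverse}; hence $\Theta^{[2]}(\mathcal{M})\in C^1_b(X_c,\mathcal{L}(X_c,X))$.

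I would then verify the defining conditions of $\Gamma_2(\delta(\varepsilon))$ for $\Theta^{[2]}(\mathcal{M})$ component by component. For the vanishing at the origin one uses $r(0)=0$ (hence $R(0)=T(0)=0$), $\mathcal{M}(0)=0$ (hence $\kappa_u(0)=\kappa_s(0)=0$), $K{}{}(0)=0$, and $Dk_c(0)=0=Dg(0)$; in each term of every component of $\Theta^{[2]}(\mathcal{M})(0)$ at least one factor vanishes (note that $\kappa(0)=\tmatrix{\operatorname{Id}}{0}{0}$ is killed by the factor $Dg_\bullet(0)=0$), so $\Theta^{[2]}(\mathcal{M})(0)=0$. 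For the sup-norm bounds, estimating the three components with $\supnorm{Dk_c}\le L_c$, $\supnorm{Dg}\le L_g$, $\supnorm{\kappa}\le 1+L_c$ (valid since $L_u,L_s\le 1+L_c$, as shown inside the proof of \cref{LemmaBanachSpace}), $\supnorm{P_\rho}\le\operatornorm{A_c}+L_r$ and $\supnorm{Q_\rho}\le L_{-1}$ reduces $\supnorm{\Theta^{[2]}_1(\mathcal{M})}\le L_r$, $\supnorm{\Theta^{[2]}_2(\mathcal{M})}\le L_u$, $\supnorm{\Theta^{[2]}_3(\mathcal{M})}\le L_s$ to precisely the algebraic identities in $L_g$ and $L_c$ already verified for $\supnorm{D[\Theta^{}_i(\Lambda)]}$ in \cref{LemmaBanachSpace}.

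The only genuinely new computation, and the one that needs care, is $\supnorm{D[\Theta^{[2]}(\mathcal{M})]}\le\delta(\varepsilon)$. I would differentiate each component by the product and chain rules, using $\supnorm{D\mathcal{M}}\le\delta(\varepsilon)$, $\supnorm{D^2g},\supnorm{D^2k_c}\le\varepsilon$, $\supnorm{D\kappa}\le\varepsilon+\delta(\varepsilon)$, $\supnorm{DK{}{}}\le 1+L_c$, $\supnorm{DT}\le L_{-1}$, together with the key estimate
\begin{align*}
\supnorm{DQ_\rho}\le L_{-1}^3\,\delta(\varepsilon),
\end{align*}
obtained by differentiating $Q_\rho=(A_c+\rho\circ T)^{-1}$ through the Banach-algebra inversion formula exactly as in \cref{EstimateDT}. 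Collecting terms componentwise gives
\begin{align*}
\supnorm{D[\Theta^{[2]}_i(\mathcal{M})]}\le\theta_{2,i}\,\delta(\varepsilon)+C_i(\varepsilon),\qquad i=1,2,3,
\end{align*}
with the \emph{same} constants $\theta_{2,i}$ and $C_i(\varepsilon)$ that appear in \cref{UniformSecondDerivativeBound}; verifying this matching is where one must be careful, and it works because $\Theta^{[2]}$ reproduces $D\Theta^{}$ term for term. Since $\delta(\varepsilon)=\max_i C_i(\varepsilon)/(1-\theta_{2,i})$ by \cref{DeltaDefinition}, each right-hand side is at most $\delta(\varepsilon)$, so $\supnorm{D[\Theta^{[2]}(\mathcal{M})]}\le\delta(\varepsilon)$ and hence $\Theta^{[2]}(\mathcal{M})\in\Gamma_2(\delta(\varepsilon))$.

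The main obstacle is therefore not conceptual but the bookkeeping in this last step: carrying out the full product/chain-rule expansion of the third component $A_s\kappa_s(T)Q_\rho+Dg_s(K{}{}\circ T)\kappa(T)Q_\rho$ — which carries both a double composition and the factor $Q_\rho$ — and confirming that the constants it produces do not exceed those of \cref{UniformSecondDerivativeBound}, so that the \emph{same} $\delta(\varepsilon)$ still closes the estimate.
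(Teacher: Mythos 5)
Your proposal is correct and follows essentially the same route as the paper: the conditions $\mathcal{M}(0)=0$ and the sup-norm bounds by $L_r$, $L_u$, $L_s$ are obtained by repeating the estimates of \cref{LemmaBanachSpace} (with $\kappa$, $P_\rho$, $Q_\rho$ playing the roles of $DK{}{}$, $DR$, $DT$), and the bound $\supnorm{D[\Theta^{[2]}(\mathcal{M})]}\le\delta(\varepsilon)$ is obtained by reproducing the estimates of \cref{UniformSecondDerivativeBound} term by term, arriving at $\theta_{2,i}\delta(\varepsilon)+C_i(\varepsilon)\le\delta(\varepsilon)$ with the same constants and closing via \cref{DeltaDefinition}. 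Your key auxiliary bound $\supnorm{DQ_\rho}\le L_{-1}^3\delta(\varepsilon)$ is exactly the analogue of \cref{EstimateDT} that the paper implicitly relies on.
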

\begin{proof}
The proof follows from similar estimates as in \cref{LemmaBanachSpace} for the bounds on $\rho$, $\kappa_u$ and $\kappa_s$ as well as for $\mathcal{M}(0) = 0$. The bound on the second derivative follows from the same estimates as in \cref{UniformSecondDerivativeBound}. We will illustrate this for the derivative of the second component, i.e. we will show that $D\Theta^{[2]}_1(\mathcal{M})$ is bounded by $\delta(\varepsilon)$ for $\mathcal{M} \in \Gamma_2(\delta(\varepsilon))$.

We start as we did in \cref{UniformSecondDerivativeBound} with
\begin{align*}
\supnorm{D \Theta^{[2]}_1(\mathcal{M})} &\le \supnorm{D[A_c D k_c]} + \supnorm{D[ Dg_c(K{}{}) \kappa]} + \supnorm{D[Dk_c(R) P_\rho]}.
\end{align*}
We again estimate the terms separately:
\begin{align*}
\supnorm{A_c D^2k_c}  &= \operatornorm{A_c} \supnorm{Dk_c} \le \operatornorm{A_c} \varepsilon, \\
\supnorm{D[Dg_c(K{}{}) \kappa]}&\le (1 + L_c)^2 \varepsilon + L_g (\varepsilon + \delta(\varepsilon)), \\
\supnorm{D[Dk_c(R)P]} &\le (\operatornorm{A_c} + L_r )^2 \varepsilon + L_c \delta(\varepsilon),
\end{align*}
where we used $D\kappa \le \varepsilon + \delta(\varepsilon)$ as we did in \cref{ BanachSpaceC2Eq3}. All together, we find
\begin{align*}
\supnorm{D\Theta^{[2]}_1(\mathcal{M})} \le \theta_{2,1} \delta(\varepsilon) + C_1(\varepsilon) \le \delta(\varepsilon).
\end{align*}
Here we used the definition of $C_1(\varepsilon)$ just below \cref{ uComponentBanachSpaceC2}, and the last inequality follows from the definition of $\delta(\varepsilon)$ in \cref{DeltaDefinition}. The other estimates are similar.
\end{proof}

\subsection{Estimates for products}

In \cref{C1ContinuitySection} we gave some preliminary results for \cref{C1Continuity} in \cref{DifferenceC0Norm,DifferenceComponentThree}. We want to derive similar results for derivatives instead of functions in \cref{DifferenceC2Norm,DifferenceC2ComponentThree} respectively. The results below will be framed in a slightly more general setting, so that we can use them in the next section as well.

\begin{lemma}\label{DifferenceC2Norm} Let $X$, $Y$ and $Z$ be Banach spaces, $m \in \mathbb{N}$ and $h \in C^1_b(X,Y)$.
\begin{enumerate}[label = {\roman*)}, ref={\cref{DifferenceC2Norm}\roman*)}]
\item\label{DifferenceC2Norm1} For $f_1,g_1 \in C^0_b(Y, \mathcal{L}(Y,Z))$, $f_2,g_2 \in C^0_b(X, \mathcal{L}^m(X,Y))$ we have the $C^0$-estimate
\begin{align*}
\supnorm{(f_1 \circ h) f_2 - ( g_1 \circ h) g_2 } &\le \supnorm{f_1} \supnorm{f_2 - g_2 } + \supnorm{f_1 - g_1}\supnorm{g_2}. 
\end{align*}
\item\label{DifferenceC2Norm2} For $f_1,g_1 \in C^1_b(Y, \mathcal{L}(Y,Z))$, $f_2,g_2 \in C^1_b(X, \mathcal{L}^m(X,Y))$ we have the $C^1$-estimate
\begin{align*}
\supnorm{D [ (f_1 \circ h) f_2] &- D[ ( g_1 \circ h) g_2 ] }  \nonumber \\
			&\qquad \le \supnorm{Df_1} \supnorm{Dh} \supnorm{f_2 - g_2} + \supnorm{Df_1 - Dg_1} \supnorm{Dh} \supnorm{g_2}  \nonumber \\
			&\qquad \quad + \supnorm{f_1} \supnorm{Df_2 - Dg_2} + \supnorm{f_1 - g_1} \supnorm{Dg_2}.
\end{align*}
\end{enumerate}
\end{lemma}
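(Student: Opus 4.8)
The plan is to prove both estimates by the standard ``add and subtract'' device, using only submultiplicativity of the supremum norm together with the fact that precomposition with $h$ does not increase the supremum norm, i.e. $\supnorm{\varphi \circ h} \le \supnorm{\varphi}$ for any bounded $\varphi$. No mean value theorem is needed here (in contrast to \cref{DifferenceC0Norm}), since the ``moving'' arguments $f_2,g_2$ are never composed with anything -- only $f_1,g_1$ are precomposed with the \emph{fixed} map $h$. The only point requiring care is bookkeeping: tracking which space of bounded multilinear maps each object lives in, via the identifications $\mathcal{L}\bigl(X,\mathcal{L}^m(X,Y)\bigr) \cong \mathcal{L}^{m+1}(X,Y)$ and the interpretation of the juxtaposition $(f_1\circ h)\, f_2$ as the pointwise composition of the linear map $f_1(h(x)):Y\to Z$ with the $m$-linear map $f_2(x):X^m\to Y$, yielding an element of $\mathcal{L}^m(X,Z)$.

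For part i), I would write the pointwise telescoping identity
\[
(f_1 \circ h)\, f_2 - (g_1 \circ h)\, g_2 = (f_1 \circ h)(f_2 - g_2) + \bigl((f_1 - g_1) \circ h\bigr)\, g_2 ,
\]
take the operator norm at a point $x\in X$, apply submultiplicativity to each summand, and then take the supremum over $x$, bounding $\sup_x \operatornorm{f_1(h(x))} \le \supnorm{f_1}$ and $\sup_x \operatornorm{(f_1-g_1)(h(x))} \le \supnorm{f_1-g_1}$ (and $\supnorm{f_2-g_2}$, $\supnorm{g_2}$ for the other factors). This is exactly the claimed $C^0$-estimate.

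For part ii), I would first apply the Leibniz rule for the bounded bilinear ``composition'' pairing $\mathcal{L}(Y,Z)\times\mathcal{L}^m(X,Y)\to\mathcal{L}^m(X,Z)$, combined with the chain rule $D(f_1\circ h) = \bigl(Df_1\circ h\bigr)Dh$, to obtain
\[
D\bigl[(f_1 \circ h)\, f_2\bigr] = \bigl((Df_1\circ h)\, Dh\bigr)\, f_2 + (f_1 \circ h)\, Df_2 ,
\]
and likewise for $g_1,g_2$; here $h$ contributes no extra term because it does not appear in $f_2$. Subtracting and telescoping each of the two summands as in part i) gives
\[
D\bigl[(f_1 \circ h) f_2\bigr] - D\bigl[(g_1 \circ h) g_2\bigr] = \bigl((Df_1\circ h)Dh\bigr)(f_2 - g_2) + \bigl(((Df_1 - Dg_1)\circ h)Dh\bigr) g_2 + (f_1 \circ h)(Df_2 - Dg_2) + \bigl((f_1 - g_1)\circ h\bigr) Dg_2 .
\]
Taking supremum norms, using submultiplicativity, $\supnorm{(Df_1\circ h)Dh}\le\supnorm{Df_1}\supnorm{Dh}$, $\supnorm{((Df_1-Dg_1)\circ h)Dh}\le\supnorm{Df_1-Dg_1}\supnorm{Dh}$, $\supnorm{f_1\circ h}\le\supnorm{f_1}$ and $\supnorm{(f_1-g_1)\circ h}\le\supnorm{f_1-g_1}$, yields precisely the four-term bound. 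The main (and essentially only) obstacle is making the multilinear-algebra identifications explicit enough that the Leibniz and chain rules apply verbatim; once that is set up, the estimate is just the triangle inequality and submultiplicativity.
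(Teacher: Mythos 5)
Your proposal is correct and follows essentially the same route as the paper: the product rule plus the chain rule $D(f_1\circ h)=(Df_1\circ h)\,Dh$, followed by the add-and-subtract telescoping of each of the two resulting products and submultiplicativity of the supremum norm. No further comment is needed.
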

\begin{proof}  \textit{i)} The $C^0$-estimate follows  from the triangle inequality and submultiplicativity of the norm.
\begin{align*}
\supnorm{(f_1 \circ h) f_2 - (g_1 \circ h)g_2} &\le \supnorm{(f_1 \circ h)(f_2 - g_2)} + \supnorm{(f_1 \circ h - g_1 \circ h)g_2} \\
					&\le\supnorm{f_1} \supnorm{f_2 - g_2 } + \supnorm{g_2}\supnorm{f_2 - g_2}.
\end{align*}

\textit{ii)} For the $C^1$-estimate we use the product rule and triangle inequality to find
\begin{align*}
\supnorm{D [ (f_1 \circ h) f_2] - D[ ( g_1 \circ h) g_2 ] }  & = \supnorm{DF_1f_2 + F_1Df_2- DG_1g_2 - G_1Dg_2} \\
			& \le \supnorm{DF_1f_2 - DG_1g_2 } + \supnorm{ F_1Df_2 -  G_1Dg_2},
\end{align*}
where we introduce $F_1 = f_1 \circ h$ and $G_1 = g_1 \circ h$. We then estimate
\begin{align*}
\supnorm{DF_1f_2 - DG_1g_2 } &\le \supnorm{DF_1f_2 - DF_1g_2 } + \supnorm{DF_1g_2 - DG_1g_2 }\\
			&\le \supnorm{Df_1}\supnorm{Dh} \supnorm{f_2 - g_2} + \supnorm{Df_1 - Dg_1}\supnorm{Dh}\supnorm{g_2}, \\
\supnorm{ F_1Df_2 -  G_1Dg_2} &\le \supnorm{F_1Df_2 - F_1Dg_2} + \supnorm{F_1 Dg_2 - G_2 Dg_2} \\
			&\le \supnorm{f_1} \supnorm{Df_1 - Dg_2} + \supnorm{f_1 - g_1} \supnorm{Dg_2}.
\end{align*}
For those estimates we have used that $DF_1 = Df_1(h) Dh$, and thus $DF_1$ is bounded by $\supnorm{Df_1}\supnorm{Dh}$ and likewise we have bounded $DF_1 - DG_1$ by $\supnorm{Df_1 - Dg_2} \supnorm{Dh}$. For the last estimate, we have used that $F_1$ is bounded by $\supnorm{f_1}$ and $F_1 - G_1$ is bounded by $\supnorm{f_1 - g_1}$. We obtain the desired estimate by adding the two estimates together
\end{proof}

\begin{lemma}\label{DifferenceC2ComponentThree} Let $\rho,\tilde{\rho} \in C_b^1(X_c,\mathcal{L}(X_c,X_c))$ be such that $\supnorm{\rho},\supnorm{\tilde{\rho}} \le L_r$ and $\supnorm{D\rho} , \supnorm{D\tilde{\rho}} \le \delta(\varepsilon)$ for some $\varepsilon > 0$. Furthermore, let $X$ and $Y$ be Banach spaces. Let $h \in C^2_b(Y,\mathcal{L}(Y,X))$ and $f_1,f_2 \in C^2_b(X_c,\mathcal{L}(X_c,Y))$. Furthermore, assume that $L_r < \operatornorm{A_c^{-1}}^{-1}$ and recall the definition of $Q_\rho$ in \cref{FunctionsFixedPointOperatorC2}.
\begin{enumerate}[label = {\roman*)}, ref={\cref{DifferenceC2ComponentThree}\roman*)}]
\item\label{DifferenceC2ComponentThree1} We have the $C^0$-estimate
\begin{align*}
\supnorm{h (f_1 \circ T)Q_\rho  - h (f_2 \circ T)Q_{\tilde{\rho}}} &\le \supnorm{h}\supnorm{f_1} L_{-1}^2 \supnorm{\rho - \tilde{\rho}} + L_{-1} \supnorm{h}\supnorm{f_1 - f_2}.
\end{align*}
\item\label{DifferenceC2ComponentThree2} We have the $C^1$-estimate
\begin{align*}
\supnorm{D [ h(f_1 \circ T)&Q_\rho] - D[ h(f_2 \circ T)Q_{\tilde{\rho}}] } \nonumber \\
				&\quad \le \supnorm{Dh}\supnorm{f_1} L_{-1}^2 \supnorm{\rho - \tilde{\rho}} + L_{-1} \supnorm{Dh}\supnorm{f_1 - f_2} \nonumber \\
				&\quad \quad + \supnorm{h}\supnorm{Df_1} L_{-1}^3 \supnorm{\rho - \tilde{\rho}} + L_{-1}^2 \supnorm{h}\supnorm{Df_1 - Df_2} \nonumber \\
				&\quad \quad + 2 \supnorm{h}\supnorm{f_1}  L_{-1}^4 \delta(\varepsilon)  \supnorm{\rho - \tilde{\rho}}   + \supnorm{h}\supnorm{f_1} L_{-1}^3 \supnorm{D\rho -D\tilde{\rho}}  \nonumber \\
				&\quad \quad + L_{-1}^3 \delta(\varepsilon) \supnorm{h}\supnorm{f_1 - f_2}.
\end{align*}
\end{enumerate}
\end{lemma}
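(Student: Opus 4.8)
The plan is to mirror the proof of \cref{DifferenceComponentThree}, replacing the composition estimates of \cref{DifferenceC0Norm} by the product estimates of \cref{DifferenceC2Norm}, and replacing the role played there by \cref{DifferenceInverse} with analogous bounds on the operator-valued map $Q_\rho$ from \cref{FunctionsFixedPointOperatorC2}. So the proof has two stages: first establish everything one needs about $Q_\rho$, $DQ_\rho$, $Q_\rho-Q_{\tilde\rho}$ and $DQ_\rho-DQ_{\tilde\rho}$, and then feed these into a short telescoping argument for the products.

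First I would record the facts about $Q_\rho$. Since $\supnorm{\rho}\le L_r<\operatornorm{A_c^{-1}}^{-1}$, for every $z$ the operator $P_\rho(z)=A_c+\rho(z)$ is invertible with $\operatornorm{P_\rho(z)^{-1}}\le L_{-1}$ (the Neumann-series estimate already used in the discussion following \cref{NormInverse} and in the proof of \cref{GlobalDiffeomorphism}), hence $\supnorm{Q_\rho}\le L_{-1}$. Differentiating the pointwise identity $Q_\rho\cdot(A_c+\rho\circ T)=\operatorname{Id}$ gives $DQ_\rho=-Q_\rho\,(D\rho\circ T)\,(DT)\,Q_\rho$, so $\supnorm{DQ_\rho}\le L_{-1}^{3}\delta(\varepsilon)$ using $\supnorm{DT}\le L_{-1}$ and $\supnorm{D\rho}\le\delta(\varepsilon)$. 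For the differences, the resolvent identity $Q_\rho-Q_{\tilde\rho}=Q_\rho\big((\tilde\rho-\rho)\circ T\big)Q_{\tilde\rho}$ yields immediately $\supnorm{Q_\rho-Q_{\tilde\rho}}\le L_{-1}^{2}\supnorm{\rho-\tilde\rho}$; differentiating this identity and telescoping over the three varying factors $Q_\rho$, $(\tilde\rho-\rho)\circ T$ and $Q_{\tilde\rho}$ (the factor $DT$ being common to $\rho$ and $\tilde\rho$) gives $\supnorm{DQ_\rho-DQ_{\tilde\rho}}\le 2L_{-1}^{4}\delta(\varepsilon)\supnorm{\rho-\tilde\rho}+L_{-1}^{3}\supnorm{D\rho-D\tilde\rho}$, where the factor $2$ comes from the two copies of the resolvent.

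With these estimates in hand, for part i) I would write $h(f_1\circ T)Q_\rho-h(f_2\circ T)Q_{\tilde\rho}=h\big((f_1-f_2)\circ T\big)Q_{\tilde\rho}+h(f_1\circ T)\big(Q_\rho-Q_{\tilde\rho}\big)$ and bound each summand by submultiplicativity of the supremum norm, using $\supnorm{(f_1-f_2)\circ T}\le\supnorm{f_1-f_2}$, $\supnorm{Q_{\tilde\rho}}\le L_{-1}$ and the $C^0$-bound on $Q_\rho-Q_{\tilde\rho}$. For part ii) I would apply the product rule to $D[h(f_1\circ T)Q_\rho]$, which splits it into three groups of terms according to whether the derivative falls on $h$, on the middle factor $f_1\circ T$, or on $Q_\rho$; within each group I telescope exactly as in part i), which is precisely \cref{DifferenceC2Norm2} applied with the appropriate factors, using in addition $\supnorm{D[f_i\circ T]}\le L_{-1}\supnorm{Df_i}$, $\supnorm{DQ_{\tilde\rho}}\le L_{-1}^{3}\delta(\varepsilon)$ and the $C^1$-bound on $DQ_\rho-DQ_{\tilde\rho}$ from the previous step. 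Collecting the resulting seven terms and grouping them as in the statement finishes the proof.

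The bulk of this is long but mechanical; the one place demanding care is the $C^1$-estimate for $Q_\rho-Q_{\tilde\rho}$, where one differentiates an operator-inverse-valued function and must keep careful track of which factors depend on the parameter (the two resolvents and the $D\rho$ term) and which do not (the fixed map $DT$). This is exactly where the hypothesis $\supnorm{D\rho}\le\delta(\varepsilon)$ is used and where the $\delta(\varepsilon)$-weighted terms originate; everything else is bookkeeping with the triangle inequality and submultiplicativity, entirely parallel to \cref{DifferenceComponentThree}.
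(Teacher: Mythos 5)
Your proposal is correct and follows essentially the same route as the paper: the same decomposition $h((f_1-f_2)\circ T)Q_{\tilde\rho}+h(f_1\circ T)(Q_\rho-Q_{\tilde\rho})$ for part i), the same bounds $\supnorm{Q_\rho}\le L_{-1}$, $\supnorm{Q_\rho-Q_{\tilde\rho}}\le L_{-1}^2\supnorm{\rho-\tilde\rho}$, $\supnorm{DQ_{\tilde\rho}}\le L_{-1}^3\delta(\varepsilon)$ and $\supnorm{DQ_\rho-DQ_{\tilde\rho}}\le 2L_{-1}^4\delta(\varepsilon)\supnorm{\rho-\tilde\rho}+L_{-1}^3\supnorm{D\rho-D\tilde\rho}$, and the same product-rule telescoping for part ii). The only cosmetic difference is that you obtain the bound on $DQ_\rho-DQ_{\tilde\rho}$ by differentiating the resolvent identity, whereas the paper telescopes the explicit formula $DQ_{\tilde\rho}=-Q_{\tilde\rho}\,DP_{\tilde\rho}(T)(DT,Q_{\tilde\rho})$; both yield the identical constants.
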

\begin{proof}
\textit{i)} For the $C^0$-estimate, we note that $h(y)$ is a linear operator for all $y \in Y$ and we use submultiplicativity of the norm and \cref{DifferenceC2Norm1}
\begin{align*}
\supnorm{h (f_1 \circ T)Q_\rho  - h (f_2 \circ T)Q_{\tilde{\rho}}} &\le \supnorm{h} \left( \supnorm{f_1} \supnorm{Q_\rho - Q_{\tilde{\rho}}} + \supnorm{f_1 - f_2} \supnorm{Q_{\tilde{\rho}}} \right). 
\end{align*}
All that is left to do is to show that $Q_{\rho} - Q_{\tilde{\rho}}$ is bounded by $L_{-1}^2 \supnorm{\rho - \tilde{\rho}}$ and that $Q_{\tilde{\rho}}$ is bounded $L_{-1}$. For the latter bound, we use similar calculations as performed at the end of the proof of \cref{GlobalDiffeomorphism1}. Namely, fix $x \in X_c$, denote $y = T(x)$ and $\tau =Q_{\tilde{\rho}}(x) -  A_c^{-1}$, then we have $(A_c^{-1} + \tau)(A_c + \tilde{\rho}(y)) = Q_{\tilde{\rho}}(x) P_{\tilde{\rho}}(y) = \operatorname{Id}$. We can rewrite this as $\tau = - A_c^{-1} \tilde{\rho}(y)A_c^{-1} - \tau \tilde{\rho}(y)A_c^{-1}$. This implies that the norm of $\tau$ is bounded by $\operatornorm{A_c^{-1}}^2\operatornorm{\tilde{\rho}(y)} /\left(1 - \operatornorm{A_c^{-1}} \operatornorm{\tilde{\rho(y)}}\right) \le L_t$, as $\operatornorm{\tilde{\rho}(y)} \le L_r$, where $L_r$ and $L_t$ are defined in \cref{FirstDerivativeUnknownsR,FirstDerivativeUnknownsT} respectively. Therefore, we have the desired bound
\begin{align*}
\supnorm{Q_{\tilde{\rho}}} \le \sup_{x \in X_c} \operatornorm{Q_{\tilde{\rho}}(T(x))} = \sup_{x \in X_c} \operatornorm{A_c^{-1} + \tau(x)} \le \sup_{x \in X_c}\operatornorm{A_c^{-1}} + L_t = L_{-1}.
\end{align*}
The bound on $Q_\rho - Q_{\tilde{\rho}}$ now follows from submultiplicativity and 
\begin{align*}
Q_\rho - Q_{\tilde{\rho}} = Q_\rho\left( P_{\tilde{\rho}} \circ T - P_\rho \circ T \right)Q_{\tilde{\rho}} = Q_\rho\left( \tilde{\rho} \circ T - \rho \circ T \right)Q_{\tilde{\rho}}.
\end{align*}

\textit{ii)} For the $C^1$-estimate, we start by with the product rule and triangle inequality to find
\begin{align*}
\supnorm{D [ h(f_1 \circ T)&Q_\rho] - D[ h(f_2 \circ T)Q_{\tilde{\rho}}] } \nonumber \\
				&\quad \le \supnorm{Dh\left( \operatorname{Id}, (f_1 \circ T)(Q_\rho -Q_{\tilde{\rho}}) \right) } + \supnorm{Dh \left( \operatorname{Id},(f_1 \circ T - f_2 \circ T)Q_{\tilde{\rho}}\right)} \\
				& \quad \quad +\supnorm{h}\supnorm{D[(f_1 \circ T)Q_\rho] - D[f_2 \circ T)Q_{\tilde{\rho}}]}.
\end{align*}
The first two terms of the right hand side are estimated using similar arguments as those used in part i), that is
\begin{align*}
\supnorm{Dh\left( \operatorname{Id}, (f_1 \circ T)(Q_\rho -Q_{\tilde{\rho}})\right)} &\le \supnorm{Dh}\supnorm{f_1} L_{-1}^2 \supnorm{\rho - \tilde{\rho}}, \\
\supnorm{Dh \left( \operatorname{Id}, (f_1 \circ T - f_2 \circ T)Q_{\tilde{\rho}} \right(} &\le   L_{-1} \supnorm{Dh}\supnorm{f_1 - f_2},
\end{align*}
which are precisely the first two terms of the right hand side of our desired $C^1$-estimate. The last term can be estimated using \cref{DifferenceC2Norm2}:
\begin{align}
\supnorm{D[(f_1 \circ T)Q_\rho] -& D[f_2 \circ T)Q_{\tilde{\rho}}]} \nonumber \\
		& \le \supnorm{Df_1} \supnorm{DT} \supnorm{Q_\rho - Q_{\tilde{\rho}}} + \supnorm{Df_1 - Df_2}\supnorm{DT}\supnorm{Q_{\tilde{\rho}}} \nonumber \\
& \quad + \supnorm{f_1} \supnorm{DQ_{\rho} - DQ_{\tilde{\rho}}} + \supnorm{f_1 - f_2} \supnorm{DQ_{\tilde{\rho}}}. \label{DifferenceC2ComponentThreeEq2}
\end{align}
We will estimate the four terms separately. With the estimates of $Q_{\tilde{\rho}}$ and $Q_\rho - Q_{\tilde{\rho}}$ from the proof of part i), and given that $\supnorm{DT} \le L_{-1}$, we find 
\begin{align*}
\supnorm{Df_1} \supnorm{DT} \supnorm{Q_\rho - Q_{\tilde{\rho}}}  &\le  \supnorm{Df_1} L_{-1}^3 \supnorm{\rho - \tilde{\rho}}, \\
\supnorm{Df_1 - Df_2}\supnorm{DT}\supnorm{Q_{\tilde{\rho}}} &\le  L_{-1}^2 \supnorm{Df_1 - Df_2},
\end{align*}
which are, up to the factor $\supnorm{h}$, the third and fourth term of the right hand side of our desired $C^1$-estimate.
Finally, we have to find an upper bound for $DQ_{\tilde{\rho}}$ and $DQ_\rho - DQ_{\tilde{\rho}}$ in terms of $L_{-1}$ and $\supnorm{\rho - \tilde{\rho}}$ to estimate the final two terms in \cref{DifferenceC2ComponentThreeEq2}. The product rule gives us, since $Q_{\tilde{\rho}} = \left( P_{\tilde{\rho}} \circ T \right)^{-1}$,
\begin{align*}
0 = D[Q_{\tilde{\rho}} (P_{\tilde{\rho}} \circ T)] =  D Q_{\tilde{\rho}} (P_{\tilde{\rho}} \circ T) + Q_{\tilde{\rho}} DP_{\tilde{\rho}}(T)DT.
\end{align*}
We isolate $D Q_{\tilde{\rho}} (P_{\tilde{\rho}} \circ T)$ and multiply from the right with $(P_{\tilde{\rho}} \circ T)^{-1} = Q_{\tilde{\rho}}$:
\begin{align}
DQ_{\tilde{\rho}} = - Q_{\tilde{\rho}}  DP_{\tilde{\rho}}(T)\left(  DT,Q_{\tilde{\rho}} \right). \label{DifferenceC2ComponentThreeEq1}
\end{align}
Furthermore, we note that $P_{\tilde{\rho}} = A_c + \tilde{\rho}$, hence $DP_{\tilde{\rho}} = D \tilde{\rho}$, which is bounded by $\delta(\varepsilon)$. We also saw that $\supnorm{Q_{\tilde{\rho}}} \le L_{-1}$ in the proof of part i). Hence we find with the triangle inequality
\begin{align*}
\supnorm{DQ_{\rho} - DQ_{\tilde{\rho}}} &\le \supnorm{  \left( Q_{\rho} - Q_{\tilde{\rho}} \right) DP_{\rho}(T)\left(  DT,Q_{\rho} \right) } \\
						&\quad + \supnorm{ Q_{\tilde{\rho}}  \left(DP_{\rho}(T) - DP_{\tilde{\rho}}(T) \right) \left(  DT,Q_{\rho} \right)} \\
						&\quad + \supnorm{Q_{\tilde{\rho}}  DP_{\tilde{\rho}}(T) \left(  DT,Q_{\rho} - Q_{\tilde{\rho}} \right)} \\
						&\le 2 L_{-1}^4 \delta(\varepsilon) \supnorm{\rho - \tilde{\rho}} + L_{-1}^3 \supnorm{D\rho - D \tilde{\rho}}.
\end{align*}
For the last inequality we used that $P_{\tilde{\rho}} = A_c + \tilde{\rho}$, and thus $DP_{\tilde{\rho}} = D \tilde{\rho}$, which is bounded by $\delta(\varepsilon)$. Furthermore, we used that $Q_{\tilde{\rho}}$ is bounded by $L_{-1}$ and $Q_{\rho} - Q_{\tilde{\rho}}$ is bounded by $L_{-1}^2 \supnorm{\rho - \tilde{\rho}}$, as shown in the proof of part i). Hence the third factor of \cref{DifferenceC2ComponentThreeEq2} is bounded by the fifth and sixth term appearing in the right hand side of our desired $C^1$-estimate. Finally, we estimate the last term of \cref{DifferenceC2ComponentThreeEq2}, where we use \cref{DifferenceC2ComponentThreeEq1} to bound $DQ_{\tilde{\rho}}$:
\begin{align*}
\supnorm{f_1 - f_2} \supnorm{DQ_{\tilde{\rho}}} \le \supnorm{Q_{\tilde{\rho}}}^2 \supnorm{DT} \supnorm{D \tilde{\rho}} \supnorm{f_1 - f_2} \le L_{-1}^3 \delta(\varepsilon) \supnorm{f_1 -f_2}.
\end{align*}
This is precisely the final term appearing in the asserted estimate.
\end{proof}

\subsection{A new contraction}

With the previous two lemmas, we will show that $\Theta^{[2]}$ is a contraction on $\Gamma_2(\delta(\varepsilon))$ for $\varepsilon >0$ small enough. We note again that we will later show that we can always scale our functions to satisfy the bound on the second derivative.

\begin{proposition}\label{C2Continuity} Assume that $L_g$ and $L_c$ are small in the sense of \cref{SmallnessConditions} for $n=2$.  There exists an $\varepsilon_0 >0 $ such for all $\varepsilon < \varepsilon_0$ it holds that  if $\supnorm{D^2 g}, \supnorm{D^2k_c} \le  \varepsilon$, then $\Theta^{[2]} : \Gamma_2(\delta(\varepsilon)) \to \Gamma_2(\delta(\varepsilon))$ is a contraction with respect to the $C^1$ norm.
\end{proposition}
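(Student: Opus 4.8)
The plan is to repeat, one derivative higher, the three–step argument used to prove \cref{C1Continuity}. Throughout, the triple $\Lambda=\tmatrix{r}{k_u}{k_s}\in\Gamma_0$ — the $C^1$ fixed point of $\Theta^{}$ produced in \cref{C1Continuity} — is frozen, so that $K{}{}$, $R=A_c+r$ and $T=R^{-1}$ are fixed data; the only objects that vary when comparing $\Theta^{[2]}(\mathcal M)$ with $\Theta^{[2]}(\tilde{\mathcal M})$ are $\mathcal M=\tmatrix{\rho}{\kappa_u}{\kappa_s}$, the assembled map $\kappa=\tmatrix{\operatorname{Id}+Dk_c}{\kappa_u}{\kappa_s}$, and the derived maps $P_\rho=A_c+\rho$ and $Q_\rho=(P_\rho\circ T)^{-1}$. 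Fixing $\mathcal M,\tilde{\mathcal M}\in\Gamma_2(\delta(\varepsilon))$, I would estimate $\onenorm{\Theta^{[2]}(\mathcal M)-\Theta^{[2]}(\tilde{\mathcal M})}$ componentwise, first in the $C^0$ norm and then in the $C^1$ norm, exactly as in the three steps of the proof of \cref{C1Continuity}.

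For the $C^0$-estimate, \cref{DifferenceC2Norm1} handles the products $Dg_c(K{}{})\kappa$ and $A_u^{-1}\kappa_u(R)P_\rho$ in the first two components, while \cref{DifferenceC2ComponentThree1} handles the two summands $A_s\kappa_s(T)Q_\rho$ and $Dg_s(K{}{}\circ T)\kappa(T)Q_\rho$ of the third component; the bounds $\supnorm{P_\rho}\le\operatornorm{A_c}+L_r$, $\supnorm{Q_\rho}\le L_{-1}$ and $\supnorm{Q_\rho-Q_{\tilde\rho}}\le L_{-1}^2\supnorm{\rho-\tilde\rho}$ established inside the proof of \cref{DifferenceC2ComponentThree} feed directly into the computation. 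Because $\kappa_u(R)$ is now multiplied by $P_\rho$ and $\kappa_s(T)$ by $Q_\rho$, the second and third componentwise constants each carry one extra factor $\operatornorm{A_c}+L_r$, resp.\ $L_{-1}$, compared with the $C^0$ contraction of \cref{C1Continuity}, and they come out exactly as $\theta_{1,2}$ and $\theta_{1,3}$, while the first is $\theta_{1,1}=L_g+L_c$. All three are $<1$ by \cref{SmallnessConditions} (the hypothesis $n=2$ gives $\theta_{\tilde n,i}<1$ for all $\tilde n\le 2$), and this $C^0$ contraction constant is independent of $\varepsilon$.

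For the $C^1$-estimate I would differentiate $\Theta^{[2]}$ and invoke \cref{DifferenceC2Norm2} and \cref{DifferenceC2ComponentThree2}, the latter also using the identity \cref{DifferenceC2ComponentThreeEq1} for $DQ_{\tilde\rho}$ together with $\supnorm{D\rho},\supnorm{D\tilde\rho}\le\delta(\varepsilon)$. As in \cref{C1Continuity}, the terms carrying neither a factor $\supnorm{D^2g},\supnorm{D^2k_c}\le\varepsilon$ nor a factor $\delta(\varepsilon)$ reassemble into componentwise constants dominated by $\theta_{2,1},\theta_{2,2},\theta_{2,3}$ (all $<1$ by \cref{SmallnessConditions}), whereas every remaining term carries a factor $\varepsilon$ or $\delta(\varepsilon)$, giving a componentwise bound $(\theta_{2,i}+C_{1,i}(\varepsilon))\onenorm{\mathcal M-\tilde{\mathcal M}}$ with $C_{1,i}(\varepsilon)\downarrow0$ as $\varepsilon\downarrow0$, since $\delta(\varepsilon)\downarrow0$ by \cref{UniformSecondDerivativeBound}. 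Choosing $\varepsilon_0>0$ so small that $\max_i\bigl(\theta_{2,i}+C_{1,i}(\varepsilon)\bigr)<1$ for all $\varepsilon<\varepsilon_0$ and combining with the $\varepsilon$-independent $C^0$ contraction then yields, just as in the final step of the proof of \cref{C1Continuity}, that $\Theta^{[2]}$ is a contraction with respect to the $C^1$ norm.

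I expect the only genuine effort to lie in the third component: matching the seven terms of the $C^1$-estimate \cref{DifferenceC2ComponentThree2} against the two summands of $\Theta^{[2]}_3$, and separating the $\delta(\varepsilon)$-bearing pieces (which come from $DQ_\rho$ via \cref{DifferenceC2ComponentThreeEq1} and from the second derivative of $Dg_s\circ K{}{}\circ T$) from the leading ones. This is not conceptually hard — \cref{DifferenceC2Norm,DifferenceC2ComponentThree} were set up precisely for this bookkeeping — but it is where the estimates are longest; the first two components and the final assembly of the contraction constant are a direct transcription of the corresponding parts of the proof of \cref{C1Continuity} with the index shifted by one.
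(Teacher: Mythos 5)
Your proposal is correct and follows essentially the same route as the paper: the same three-step scheme (an $\varepsilon$-independent $C^0$ contraction with constants $\theta_{1,i}$, a $C^1$ Lipschitz bound of the form $\theta_{2,i}$ plus an $\varepsilon$- and $\delta(\varepsilon)$-dependent remainder, and a final choice of $\varepsilon_0$), using \cref{DifferenceC2Norm} for the first two components and \cref{DifferenceC2ComponentThree} for the two summands of the third. The only quibble is notational: the remainder constants in the $C^1$ step should not be called $C_{1,i}(\varepsilon)$, which the paper already uses in the proof of \cref{C1Continuity}; the paper names them $C_{2,i}(\varepsilon)$.
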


\begin{proof}
Let $\varepsilon > 0$ and $\supnorm{D^2 g}, \supnorm{D^2k_c} \le \varepsilon$. Let $\mathcal{M} = \tmatrix{\rho}{\kappa_u}{\kappa_s},  \tilde{\mathcal{M}}= \tmatrix{\tilde{\rho}}{\tilde{\kappa}_u}{ \tilde{\kappa}_s} \in \Gamma_{2}(\delta(\varepsilon))$. To show that $\Theta^{[2]}$ is a $C^1$ contraction, we will use the same steps as we used in the proof of  \cref{C1Continuity}.

\begin{enumerate}[label=\Alph*)]
\item\label{C2Continuity1} We prove that $\Theta^{[2]}$ is a contraction with respect to the $C^0$ norm, independent of $\varepsilon$,

\item\label{C2Continuity2} We show the existence of a constant $\theta_2(\varepsilon)$ such that 
\begin{align*}
\supnorm{D[\Theta^{[2]}(\mathcal{M})] - D[\Theta^{[2]}(\tilde{\mathcal{M}})]} \le \theta_2(\varepsilon) \onenorm{\mathcal{M} - \tilde{\mathcal{M}}},
\end{align*}

\item\label{C2Continuity3} We show that $\varepsilon > 0$ can be chosen so that $\theta_2(\varepsilon) <1$,  thus proving that $\Theta^{[2]}$ is a contraction with respect to the $C^1$ norm. 
\end{enumerate}

\textbf{Step \cref{C2Continuity1}} We recall that $\theta_1(\varepsilon)$, defined in \cref{DefinitionTheta1}, has the property $\theta_1(0) <1$. We want to show that 
\begin{align*}
\supnorm{\Theta^{[2]}(\mathcal{M}) - \Theta^{[2]}(\tilde{\mathcal{M}})} \le \theta_1(0) \supnorm{\mathcal{M} - \tilde{\mathcal{M}}}.
\end{align*}
Recall from equation \cref{FixedPointOperatorC2Equation} that
\begin{align*}
\Theta^{[2]}(\mathcal{M}) = \Theta^{[2]} \begin{pmatrix} \rho \\ \kappa_u \\ \kappa_s \end{pmatrix} =
\begin{pmatrix*}[l]
A_c  Dk_c + Dg_c(K{}{})  \kappa - Dk_c\left(R\right)  P_\rho \\
-A_u^{-1}  Dg_u(K{}{})  \kappa + A_u^{-1}  \kappa_u(R)  P_\rho \\
A_s  \kappa_s(T)  Q_\rho  + Dg_s(K{}{} \circ T)  \kappa(T)  Q_\rho
\end{pmatrix*},
\end{align*}
which was derived by taking the derivative of $\Theta^{}\tmatrix{r}{k_u}{k_s}$. Therefore, we will use similar arguments as in step \cref{C1Continuity2} of the proof of \cref{C1Continuity} to show that
\begin{align*}
\supnorm{ \Theta^{[2]}_i(\mathcal{M}) - \Theta^{[2]}_i(\tilde{\mathcal{M}})} &\le \theta_{1,i} \supnorm{\mathcal{M} - \tilde{\mathcal{M}}}
\end{align*}
for $\theta_{1,i}$ given explicitly in equation \cref{ContractionConstants1,ContractionConstants2,ContractionConstants3} for $i=1,2,3$.

\underline{\textit{$\rho$-component:}} We have
\begin{align}
\supnorm{\Theta^{[2]}_1(\mathcal{M})& - \Theta^{[2]}_1(\tilde{\mathcal{M}})} \nonumber \\
	 &\le \supnorm{ (Dg_c \circ K{}{}) \kappa  -  (Dg_c \circ K{}{}) \tilde{\kappa}} + \supnorm{ (Dk_c \circ R)P_{\rho} - (Dk_c \circ R) P_{\tilde{\rho}}}. \label{rComponentEqC20}
\end{align}
The first term is estimated by \cref{DifferenceC2Norm1}:
\begin{align}
\supnorm{ (Dg_c \circ K{}{}) \kappa  -  (Dg_c \circ K{}{}) \tilde{\kappa}}  &\le \supnorm{Dg_c} \supnorm{\kappa - \tilde{\kappa}} \le L_g \supnorm{\mathcal{M} - \tilde{\mathcal{M}}}. \label{rComponentEqC201}
\end{align}
Here we recall that $\supnorm{Dg_c} \le L_g$, which follows from assumption \cref{MainTheoremKnownBounds } of \cref{MainTheorem}. Likewise, we estimate
\begin{align}
\supnorm{ (Dk_c \circ R)P_{\rho} - (Dk_c \circ R) P_{\tilde{\rho}}} &\le \supnorm{Dk_c} \supnorm{ \rho - \tilde{\rho}} \le L_c \supnorm{\mathcal{M}  - \tilde{\mathcal{M}}}. \label{rComponentEqC202}
\end{align}
Thus inequality \cref{rComponentEqC20} together with estimates  \cref{rComponentEqC201,rComponentEqC202} gives
\begin{align}
\supnorm{\Theta^{[2]}_1(\mathcal{M}) - \Theta^{[2]}_1(\tilde{\mathcal{M}})} &\le \left( L_g + L_c  \right) \supnorm{\mathcal{M} - \tilde{\mathcal{M}}} = \theta_{1,1} \supnorm{\mathcal{M} - \tilde{\mathcal{M}}}. \label{ C2ContractionEquation1}
\end{align}

\underline{\textit{$\kappa_u$-component:}}
Similarly, we have
\begin{align}
\supnorm{\Theta^{[2]}_2(\mathcal{M}) - \Theta^{[2]}_2(\tilde{\mathcal{M}})}  &\le \operatornorm{ A_u^{-1}} \supnorm{ (Dg_u \circ K{}{}) \kappa - (Dg_u \circ K{}{}) \tilde{\kappa} } \nonumber \\
							&\quad + \operatornorm{ A_u^{-1}} \supnorm{(\kappa_u \circ R) P_{\rho} - ( \tilde{\kappa}_u\circ R) P_{\tilde{\rho}}} . \label{uComponentEqC20}
\end{align}
We again use \cref{DifferenceC2Norm1}, which gives
\begin{align*}
\supnorm{ (Dg_u \circ K{}{}) \kappa - (Dg_u \circ K{}{}) \tilde{\kappa} } &\le \supnorm{Dg_u} \supnorm{\kappa - \tilde{\kappa}} \le  L_g \supnorm{\mathcal{M} - \tilde{\mathcal{M}}}, \\
\supnorm{(\kappa_u \circ R) P_{\rho} - ( \tilde{\kappa}_u\circ R) P_{\tilde{\rho}}} &\le \supnorm{\kappa_u} \supnorm{P_\rho - P_{\tilde{\rho}}} + \supnorm{P_{\tilde{\rho}}} \supnorm{\kappa_u - \tilde{\kappa}_u} \\
				&\le \left( L_u + \operatornorm{A_c} + L_r \right) \supnorm{\mathcal{M} - \tilde{\mathcal{M}}}. 
\end{align*}
Here we used that $\kappa_u$ is bounded by $L_u$ and $\tilde{\rho}$ by $L_r$. Thus inequality \cref{uComponentEqC20} becomes
\begin{align}
\supnorm{\Theta^{[2]}_2(\mathcal{M}) - \Theta^{[2]}_2(\tilde{\mathcal{M}})} &\le  \theta_{1,2} \supnorm{\mathcal{M} - \tilde{\mathcal{M}}}. \label{ C2ContractionEquation2}
\end{align}

\underline{\textit{$\kappa_s$-component:}} Let us denote $G_s =  Dg_s \circ K{}{} \circ T$. Then
\begin{align}
\supnorm{\Theta^{[2]}_3(\mathcal{M}) - \Theta^{[2]}_3(\tilde{\mathcal{M}})}   &\le  \supnorm{A_s (\kappa_s\circ T)Q_{\rho} - A_s (\tilde{\kappa}_s \circ T) Q_{\tilde{\rho}}} \nonumber \\
			&\quad   + \supnorm{G_s( \kappa \circ T) Q_\rho - G_s (\tilde{\kappa} \circ T) Q_{\tilde{\rho}}}. \label{sComponentEqC20}
\end{align}
We will estimate both terms with \cref{DifferenceC2ComponentThree1}. We note that $\kappa$ is bounded by $1+L_c$, and hence we obtain
\begin{align*}
\supnorm{A_s (\kappa_s\circ T)Q_{\rho} - A_s (\tilde{\kappa}_s \circ T) Q_{\tilde{\rho}}}  &\le \left( \operatornorm{A_s} L_s L_{-1}^2 + L_{-1} \operatornorm{A_s} \right) \supnorm{\mathcal{M} - \tilde{\mathcal{M}}}, \\
\supnorm{G_s( \kappa \circ T) Q_\rho - G_s (\tilde{\kappa} \circ T) Q_{\tilde{\rho}}} &\le  \left( L_g(1 + L_c) L_{-1}^2 + L_{-1}L_g \right) \supnorm{\mathcal{M} - \tilde{\mathcal{M}}}. 
\end{align*}
Thus inequality \cref{sComponentEqC20} becomes
\begin{align}
\supnorm{\Theta^{[2]}_3(\mathcal{M}) - \Theta^{[2]}_3(\tilde{\mathcal{M}})} 
									 &\le \theta_{1,3}  \supnorm{\mathcal{M} - \tilde{\mathcal{M}}}. \label{ C2ContractionEquation3}
\end{align}

\underline{\textit{Contraction constant:}} We can now estimate $\supnorm{\Theta^{[2]}(\mathcal{M}) - \Theta^{[2]}(\tilde{\mathcal{M}})}$ with inequalities \cref{ C2ContractionEquation1, C2ContractionEquation2, C2ContractionEquation3}. We have
\begin{align}
\supnorm{\Theta^{[2]}(\mathcal{M}) - \Theta^{[2]}(\tilde{\mathcal{M}})} &= \max_{i=1,2,3} \left\{ \supnorm{\Theta^{[2]}_i(\mathcal{M}) - \Theta^{[2]}_i(\tilde{\mathcal{M}})} \right\} \nonumber \\
										&\le \max_{i=1,2,3} \left\{ \theta_{1,i} \supnorm{\mathcal{M} - \tilde{\mathcal{M}}} \right\} \nonumber \\
										&= \theta_1(0) \supnorm{\mathcal{M} - \tilde{\mathcal{M}}}, \label{C2ContractionEq1}
\end{align}
where the last equality follows from the definition of $\theta_1$ in \cref{DefinitionTheta1}. Since $\theta_1(0)<1$, we conclude that $\Theta^{[2]}$ is a contraction with respect to the $C^0$ norm.  \vspace{1 \baselineskip}

\textbf{Step \cref{C2Continuity2}} Analogous to step \cref{C1Continuity2} of the proof of \cref{C1Continuity}, we want to prove the component-wise inequality
\begin{align*}
\supnorm{D[\Theta^{[2]}_i(\mathcal{M})] - D[\Theta^{[2]}_i(\tilde{\mathcal{M}})]} &\le \left(\theta_{2,i} + C_{2,i}(\varepsilon) \right) \onenorm{\mathcal{M} - \tilde{\mathcal{M}}},
\end{align*} 
with $\theta_{2,i}$ defined in \cref{ContractionConstants1,ContractionConstants2,ContractionConstants3} and $C_{2,i}$ will be defined during the proof. We note that $\mathcal{M},\tilde{\mathcal{M}} \in \Gamma_2(\delta(\varepsilon))$, hence $D\rho$, $D\kappa_u$ and $D\kappa_s$ are bounded by $\delta(\varepsilon)$.

\underline{\textit{$\rho$-component:}} We start with
\begin{align}
\supnorm{D[\Theta^{[2]}_1(\mathcal{M})] - D[\Theta^{[2]}_1(\tilde{\mathcal{M}})]} &\le \supnorm{D[(Dg_c \circ K{}{})\kappa] - D[(Dg_c \circ K{}{}) \tilde{\kappa}]} \nonumber \\
		&\quad + \supnorm{D[(Dk_c \circ R) P_{\rho}] - D[(Dk_c \circ R)P_{\tilde{\rho}}]}.  \label{rComponentEqC21}
\end{align}
By applying \cref{DifferenceC2Norm2} we find that
\begin{align*}
\supnorm{D[(Dg_c \circ K{}{})\kappa] &- D[(Dg_c \circ K{}{}) \tilde{\kappa}]} \\
						&\le  \supnorm{D^2 g_c} \supnorm{D K{}{}}  \supnorm{\kappa - \tilde{\kappa}} + \supnorm{Dg_c} \supnorm{D\kappa - D \tilde{\kappa}} \\
						&\le \left(  (1+ L_c) \varepsilon + L_g \right) \onenorm{\mathcal{M} - \tilde{\mathcal{M}}},
\end{align*}
where we recall that $DK{}{}$ is bounded by $1+L_c$, see \cref{EstimateDk}. Likewise, we find the estimate
\begin{align*}
\supnorm{D[(Dk_c \circ R) P_{\rho}] &- D[(Dk_c \circ R)P_{\tilde{\rho}}]}  \\
								&\le \supnorm{D^2k_c} \supnorm{DR} \supnorm{\rho- \tilde{\rho}}  + \supnorm{Dk_c} \supnorm{D\rho- D\tilde{\rho}}   \\ 
								&\le  \left(\left( \operatornorm{A_c} + L_r \right) \varepsilon + 1 + L_c  \right) \onenorm{\mathcal{M} - \tilde{\mathcal{M}}}. 
\end{align*}
Thus inequality \cref{rComponentEqC21} together with the above estimates gives
\begin{align}
\supnorm{D[\Theta^{[2]}_1(\mathcal{M})] - D[\Theta^{[2]}_1(\tilde{\mathcal{M}})]}	&\le \left( L_g + L_c + \left( 1 + L_c  +  \operatornorm{A_c}+ L_r \right) \varepsilon \right) \onenorm{\mathcal{M} - \tilde{\mathcal{M}}} \nonumber \\
							&= \left( \theta_{2,1} + C_{2,1}(\varepsilon)\right) \onenorm{\mathcal{M} - \tilde{\mathcal{M}}}, \label{ C2ContractionEquation4}
\end{align}
where we define $C_{2,1}(\varepsilon) \isdef \left( 1 + L_c + \operatornorm{A_c} +  L_r \right)\varepsilon$.

\underline{\textit{$\kappa_u$-component:}} We have
\begin{align}
\supnorm{D[\Theta^{[2]}_2(\mathcal{M})] - D[\Theta^{[2]}_2(\tilde{\mathcal{M}})]} &\le  \operatornorm{A_u^{-1}}  \supnorm{ D[ (Dg_u \circ K{}{}) \kappa] - D[(Dg_u \circ K{}{}) \tilde{\kappa}]} \nonumber \\
									&\quad + \operatornorm{A_u^{-1}} \supnorm{D[( \kappa_u \circ R) P_{\rho}] - D[( \tilde{\kappa}_u \circ R) P_{\tilde{\rho}}]}. \label{uComponentEqC21}
\end{align}
By \cref{DifferenceC2Norm2} we get
\begin{align*}
\supnorm{ D[ (Dg_u \circ K{}{}) \kappa] - D[(Dg_u \circ K{}{}) \tilde{\kappa}]} &\le  \left( (1 + L_c) \varepsilon + L_g \right) \onenorm{\mathcal{M} -\tilde{\mathcal{M}}}, \\
\supnorm{D[( \kappa_u \circ R) P_{\rho}] - D[( \tilde{\kappa}_u \circ R) P_{\tilde{\rho}}]}  
&\le \left(  (\operatornorm{A_c} + L_r ) \delta( \varepsilon) + L_u \right) \onenorm{\rho - \tilde{\rho}} \\
								&\quad + \left(  (\operatornorm{A_c} + L_r )^2  +  \delta( \varepsilon) \right) \onenorm{\kappa_u - \tilde{\kappa}_u}.
\end{align*}
Thus inequality \cref{uComponentEqC21} becomes, as $\rho - \tilde{\rho}$ and $\kappa - \tilde{\kappa}$ are bounded by $\onenorm{\mathcal{M} - \tilde{\mathcal{M}}}$,
\begin{align}
\supnorm{D[\Theta^{[2]}_2(\mathcal{M})] - D[\Theta^{[2]}_2(\tilde{\mathcal{M}})]} &\le \operatornorm{A_u^{-1}} \Big(  (\operatornorm{A_c} + L_r)^2 + L_g + L_u  \nonumber \\
							&\quad + (2 + L_c) \varepsilon + (\operatornorm{A_c} + L_r ) \delta \Big) \onenorm{\mathcal{M} - \tilde{\mathcal{M}}} \nonumber \\
							&\le \left( \theta_{2,2} + C_{2,2}(\varepsilon)\right) \onenorm{\mathcal{M} - \tilde{\mathcal{M}}}, \label{ C2ContractionEquation5}
\end{align}
where we define $C_{2,2}(\varepsilon) \isdef \operatornorm{A_u^{-1}}( (2 + L_c) \varepsilon + (\operatornorm{A_c} + L_r ) \delta(\varepsilon))$.

\underline{\textit{$\kappa_s$-component:}} Let us again denote $G_s = Dg_s \circ K{}{} \circ T$, then we have
\begin{align}
\supnorm{D[\Theta^{[2]}_3(\mathcal{M})] - D[\Theta^{[2]}_3(\tilde{\mathcal{M}})]}	&\le \supnorm{D[A_s(\kappa_s \circ T)Q_\rho] -D[A_s (\tilde{\kappa}_s \circ T)Q_{\tilde{\rho}}]} \nonumber \\
									&\quad + \supnorm{D[G_s ( \kappa \circ T)Q_\rho] - D[G_s(\tilde{\kappa} \circ T)Q_{\tilde{\rho}}]}. \label{sComponentEqC21}
\end{align}
We will estimate both terms with \cref{DifferenceC2ComponentThree2}. For the first term, we note that $A_s$ is the constant operator $x \mapsto A_s$, hence $\supnorm{A_s} = \operatornorm{A_s}$ and $DA_s = 0$, which gives us
\begin{align}
\supnorm{D[A_s(\kappa_s \circ T)Q_\rho] &-D[A_s (\tilde{\kappa}_s \circ T)Q_{\tilde{\rho}}]} \nonumber \\
				&\le \operatornorm{A_s} L_{-1}^3  \supnorm{D\kappa_s} \supnorm{\rho - \tilde{\rho}} + \operatornorm{A_s} L_{-1}^2 \supnorm{D\kappa_s - D\tilde{\kappa}_s}  \nonumber \\
				&\quad + 2 \operatornorm{A_s} L_{-1}^4  \supnorm{\kappa_s} \delta(\varepsilon) \supnorm{\rho - \tilde{\rho}} + \operatornorm{A_s} L_{-1}^3 \supnorm{\kappa_s} \supnorm{D\rho - D \tilde{\rho}} \nonumber \\
				&\quad +\operatornorm{A_s}  L_{-1}^3  \delta(\varepsilon) \supnorm{\kappa_s - \tilde{\kappa}_s} \nonumber \\
				&\le \operatornorm{A_s} L_{-1}^2 \left( 1 + L_{-1} L_s \right) \onenorm{\mathcal{M} - \tilde{\mathcal{M}}} \label{sComponentEqC22}\\
				&\quad + 2 \operatornorm{A_s} L_{-1}^3 \left( 1 + L_{-1} L_s \right) \delta(\varepsilon) \onenorm{\mathcal{M} - \tilde{\mathcal{M}}}. \label{sComponentEqC23}
\end{align}
The second term in the right hand side of \cref{sComponentEqC21} involves $\kappa$, which is bounded by $1 + L_c$, its derivative, which is bounded by
\begin{align*}
\supnorm{D\kappa} = \max \{ D^2 k_c , D \kappa_u , D\kappa_s \} \le \max\{ \varepsilon, \delta(\varepsilon) \} \isfed \gamma(\varepsilon),
\end{align*} and the derivative of $G_s$. The derivative of $G_s$ is bounded by $\supnorm{D^2g_s}\supnorm{DK{}{}}\supnorm{DT}$, which in turn is bounded by  $L_{-1}(1 + L_c ) \varepsilon$. We obtain
\begin{align}
\supnorm{D[G_s ( \kappa \circ T)Q_\rho] &- D[G_s(\tilde{\kappa} \circ T)Q_{\tilde{\rho}}]} \nonumber \\				
				&\le \supnorm{DG_s} L_{-1}^2 \supnorm{\kappa} \supnorm{\rho - \tilde{\rho}} + \supnorm{DG_s} L_{-1}\supnorm{\kappa - \tilde{\kappa}} \nonumber \\
				&\quad + \supnorm{G_s} L_{-1}^3  \supnorm{D\kappa} \supnorm{\rho - \tilde{\rho}} + \supnorm{G_s} L_{-1}^2 \supnorm{D\kappa - D\tilde{\kappa}}  \nonumber \\
				&\quad + 2 \supnorm{G_s} L_{-1}^4  \supnorm{\kappa} \delta(\varepsilon) \supnorm{\rho - \tilde{\rho}} + \supnorm{G_s} L_{-1}^3 \supnorm{\kappa} \supnorm{D\rho - D \tilde{\rho}} \nonumber \\
				&\quad +\supnorm{G_s}  L_{-1}^3  \delta(\varepsilon) \supnorm{\kappa - \tilde{\kappa}} \nonumber \\
				&\le L_{-1}^2( L_g + L_{-1}L_g(1+L_c)) \onenorm{\mathcal{M} - \tilde{\mathcal{M}}} \label{sComponentEqC24}  \\
				&\quad + \left(  L_{-1}^3 L_g \gamma(\varepsilon) +  L_{-1}^3( L_g + 2 L_{-1}L_g(1+L_c)) \delta (\varepsilon)  \right) \onenorm{\mathcal{M} - \tilde{\mathcal{M}}} \label{sComponentEqC25}  \\
				&\quad +  \left( L_{-1}^3(1+L_c)^2 \varepsilon + L_{-1}^2(1 + L_c) \varepsilon \right) \onenorm{\mathcal{M} - \tilde{\mathcal{M}}}. \nonumber 
\end{align}	
We see that \cref{sComponentEqC22,sComponentEqC24} together are $\theta_{2,3}\onenorm{\mathcal{M}- \tilde{\mathcal{M}}}$. Likewise, we can estimate \cref{sComponentEqC23,sComponentEqC25} together by $2 \theta_{3,3} \gamma(\varepsilon)\onenorm{\mathcal{M}- \tilde{\mathcal{M}}}$ as $\delta(\varepsilon) \le \gamma(\varepsilon)$.Then inequality \cref{sComponentEqC21} becomes
\begin{align}
\supnorm{D[\Theta^{[2]}_3(\mathcal{M})] - D[\Theta^{[2]}_3(\tilde{\mathcal{M}})]} &\le \left( \theta_{2,3} + C_{2,3}(\varepsilon) \right) \onenorm{\mathcal{M} - \tilde{\mathcal{M}}}, \label{ C2ContractionEquation6}
\end{align}
where we define $C_{2,3}(\varepsilon) \isdef 2 \theta_{3,3} \gamma(\varepsilon) + L_{-1}^2 ( 1 + L_c) \varepsilon  + L_{-1}^3(1 + L_c)^2  \varepsilon$.

\underline{\textit{Lipschitz constant:}} Inequalities \cref{ C2ContractionEquation4, C2ContractionEquation5, C2ContractionEquation6} imply
\begin{align}
\supnorm{D[\Theta^{[2]}(\mathcal{M})] - D[\Theta^{[2]}(\tilde{\mathcal{M}})]} &= \max_{i=1,2,3} \left\{ \supnorm{D[\Theta^{[2]}_i(\mathcal{M})] - D[\Theta^{[2]}_i(\tilde{\mathcal{M}})]} \right\} \nonumber \\
										&\le \max_{i=1,2,3} \left\{( \theta_{2,i} + C_{2,i}(\varepsilon)) \onenorm{\mathcal{M} - \tilde{\mathcal{M}}} \right\} \nonumber \\
										&\le \theta_2(\varepsilon) \onenorm{\mathcal{M} - \tilde{\mathcal{M}}}. \label{C2ContractionEq2}
\end{align}
Here we define $\theta_2(\varepsilon) \isdef \max\left\{ \theta_{2,1}, \theta_{2,2}, \theta_{2,3} \right\} + \max \{ C_{2,1}(\varepsilon), C_{2,2}(\varepsilon), C_{2,3}(\varepsilon)\}$. \vspace{1 \baselineskip}

\textbf{Step \cref{C2Continuity3}} From \cref{SmallnessConditions} it follows that 
\begin{align*}
\max\left\{ \theta_{2,1}, \theta_{2,2}, \theta_{2,3} \right\} < 1.
\end{align*}
As $\delta(\varepsilon) \downarrow 0$ when $\varepsilon \downarrow 0$, see \cref{UniformSecondDerivativeBound}, we have
\begin{align*}
\lim_{\varepsilon \to 0} \max\{C_{2,1}(\varepsilon), C_{2,2}(\varepsilon),C_{2,3}(\varepsilon)\} = 0.
\end{align*}
For the limit of $\varepsilon$ to $0$ of $\theta_2(\varepsilon)$ we find
\begin{align*}
\lim_{\varepsilon \to 0} \theta_2(\varepsilon) = \max\left\{ \theta_{2,1}, \theta_{2,2}, \theta_{2,3} \right\} < 1.
\end{align*}
Therefore, we can find an $\varepsilon_0 > 0 $ such that $\theta_2(\varepsilon) < 1$ for all $\varepsilon < \varepsilon_0$. Then estimates  \cref{C2ContractionEq1,C2ContractionEq2} give
\begin{align*}
\onenorm{\Theta^{[2]}(\mathcal{M}) - \Theta^{[2]}(\tilde{\mathcal{M}})} &= \max\left\{ \supnorm{\Theta^{[2]}(\mathcal{M}) - \Theta^{[2]}(\tilde{\mathcal{M}})}, \supnorm{D[\Theta^{[2]}(\mathcal{M})] - D[\Theta^{[2]}(\tilde{\mathcal{M}})]} \right\} \\
											&\le \max \left\{ \theta_1(0) \supnorm{\mathcal{M} - \tilde{\mathcal{M}}}, \theta_2(\varepsilon) \onenorm{\mathcal{M} - \tilde{\mathcal{M}}} \right\} \\
											&\le \lambda_2 \onenorm{\mathcal{M} - \tilde{\mathcal{M}}}.
\end{align*}
We define the contraction constant $\lambda_2 \isdef \max\{\theta_1(0) , \theta_2(\varepsilon) \}$, which is smaller than $1$ by the above discussion. Thus we see that $\Theta^{[2]}: \Gamma_2(\delta(\varepsilon)) \to \Gamma_2(\delta(\varepsilon))$ is a contraction with respect to the $C^1$ norm for all $\varepsilon < \varepsilon_0$.
\end{proof}

\begin{corollary}\label{C2Existence}
Let $\varepsilon >0$ such that $F{}{} :X \to X$ satisfies the conditions of \cref{MainTheorem,C1Continuity,C2Continuity}. Then the image of $K{}{}$ is a $C^2$ center manifold for $F{}{}$.
\end{corollary}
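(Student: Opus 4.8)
The plan is to run the same scheme as in \cref{C1Existence}, but one derivative higher, using the operator $\Theta^{[2]}$ of \cref{FixedPointOperatorC2Equation}. Fix $\varepsilon$ small enough that the hypotheses of \cref{C1Continuity,C2Continuity} hold, and let $\Lambda = \tmatrix{r}{k_u}{k_s}\in\Gamma_0$ be the fixed point of $\Theta$ produced in \cref{C1Existence}, with conjugacy $K{}{} = \iota + \tmatrix{k_c}{k_u}{k_s}$. By \cref{FixedPointOperatorC2} the function $D\Lambda$ is a fixed point of $\Theta^{[2]}$. On the other hand, by \cref{C2Continuity} the operator $\Theta^{[2]}$ is a contraction with respect to the $C^1$ norm on $\Gamma_2(\delta(\varepsilon))$, and — in contrast to $\Gamma_1(\delta(\varepsilon))$ — the set $\Gamma_2(\delta(\varepsilon))$ is closed in the Banach space $C^1_b(X_c,\mathcal{L}(X_c,X))$, so $\Theta^{[2]}$ has a genuine fixed point $\mathcal{M}\in\Gamma_2(\delta(\varepsilon))$, which in particular is $C^1$. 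Once I show $\mathcal{M} = D\Lambda$, it follows that $D\Lambda$ is $C^1$, hence $r$, $k_u$ and $k_s$ are $C^2$; since $k_c$ is $C^n$ with $n\ge2$, the conjugacy $K{}{}$ is $C^2$, and then, exactly as in \cref{C1Existence}, membership $\Lambda\in\Gamma_0$ forces the image of $K{}{}$ to be invariant under $F{}{}$ and tangent to $X_c$ at $0$, so it is a $C^2$ center manifold for $F{}{}$.

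The delicate point is the identification $\mathcal{M}=D\Lambda$: a priori $D\Lambda$ is only continuous, so I cannot claim $D\Lambda\in\Gamma_2(\delta(\varepsilon))$ and the $C^1$-contraction on that set does not directly apply. To get around this, I would run a $C^0$-contraction on a larger set in which the bound $\delta(\varepsilon)$ on the derivative is dropped. Concretely, set
\[
W \isdef \left\{ \mathcal{M} = \tmatrix{\rho}{\kappa_u}{\kappa_s} \in C^0_b\bigl(X_c,\mathcal{L}(X_c,X)\bigr) \ \middle|\ \mathcal{M}(0)=0,\ \supnorm{\rho}\le L_r,\ \supnorm{\kappa_u}\le L_u,\ \supnorm{\kappa_s}\le L_s \right\},
\]
which is a closed subset of $C^0_b(X_c,\mathcal{L}(X_c,X))$. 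Since $L_r<\operatornorm{A_c^{-1}}^{-1}$, the maps $P_\rho$ and $Q_\rho$ of \cref{FunctionsFixedPointOperatorC2} are well defined and continuous for every $\mathcal{M}\in W$, so $\Theta^{[2]}$ makes sense on $W$; the invariance bounds $\supnorm{\Theta^{[2]}_1(\mathcal{M})}\le L_r$, $\supnorm{\Theta^{[2]}_2(\mathcal{M})}\le L_u$, $\supnorm{\Theta^{[2]}_3(\mathcal{M})}\le L_s$ and $\Theta^{[2]}(\mathcal{M})(0)=0$ (the analogue of \cref{LemmaBanachSpace} already invoked right after the definition of $\Gamma_2$) only use the $C^0$ bounds in the definition of $W$, so $\Theta^{[2]}(W)\subseteq W$. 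Moreover, inspecting Step \cref{C2Continuity1} of the proof of \cref{C2Continuity}, the $C^0$-contraction estimate $\supnorm{\Theta^{[2]}(\mathcal{M})-\Theta^{[2]}(\tilde{\mathcal{M}})}\le\theta_1(0)\supnorm{\mathcal{M}-\tilde{\mathcal{M}}}$, with $\theta_1(0)<1$, rests only on \cref{DifferenceC2Norm1,DifferenceC2ComponentThree1} together with the $C^0$ bounds $\supnorm{\rho},\supnorm{\tilde\rho}\le L_r$ and $\supnorm{\kappa}\le1+L_c$ — never on the derivative bound $\delta(\varepsilon)$. Hence $\Theta^{[2]}$ is a contraction on the complete metric space $(W,\supnorm{\cdot})$ and has a unique fixed point there.

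To finish, I would check that both $\mathcal{M}$ and $D\Lambda$ lie in $W$. This is clear for $\mathcal{M}$, since $\Gamma_2(\delta(\varepsilon))\subseteq W$. For $D\Lambda$: it is continuous and bounded because $\Lambda\in C^1_b$, and it satisfies $D\Lambda(0)=0$, $\supnorm{Dr}\le L_r$, $\supnorm{Dk_u}\le L_u$, $\supnorm{Dk_s}\le L_s$ because these are precisely the defining conditions of $\Gamma_0$. By uniqueness of the fixed point of $\Theta^{[2]}$ in $W$ we conclude $\mathcal{M}=D\Lambda$, and the corollary follows as explained above. I expect the main obstacle to be exactly this step — making sure the abstract $C^1$ fixed point $\mathcal{M}$ is genuinely the derivative of $\Lambda$ — and the observation that makes it go through is that the $C^0$ half of the contraction estimates for $\Theta^{[2]}$ is insensitive to the second-derivative bound $\delta(\varepsilon)$, so the two candidates can be compared in a space to which the a priori lack of $C^2$ regularity of $\Lambda$ is irrelevant.
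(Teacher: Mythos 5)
Your proposal is correct and follows essentially the same route as the paper: your set $W$ is exactly the paper's auxiliary set $\Gamma_{3/2}$, and the identification $\mathcal{M}=D\Lambda$ via the $C^0$-contraction of $\Theta^{[2]}$ on that larger set (which contains both $\Gamma_2(\delta(\varepsilon))$ and $D\Lambda$) is precisely the paper's argument. The only (harmless) additions are your explicit remarks on the closedness of $\Gamma_2(\delta(\varepsilon))$ in $C^1_b$ and on the invariance of $W$.
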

\begin{proof}
By assumption, $\varepsilon > 0$ is such that both $\Theta^{}$ and $\Theta^{[2]}$ are contractions. Let $\Lambda = \tmatrix{r}{k_u}{k_s} \in \Gamma_0$ be a fixed point of $\Theta^{}$. Then \cref{C1Existence} implies that $K{}{} = \iota + \tmatrix{k_c}{k_u}{k_s}$ parameterizes a $C^1$ center manifold for $F{}{}$. Thus if we prove that $\Lambda$ is $C^2$, or, equivalent, that $D\Lambda$ is $C^1$, we are done. 

We will show that $D\Lambda$ is in fact the fixed point of $\Theta^{[2]}$. Let
\begin{align*}
\Gamma_{3/2} = \left\{ \mathcal{M} = \begin{pmatrix} \rho \\ \kappa_u \\ \kappa_s \end{pmatrix} \in C^0_b \left(X_c, \mathcal{L}(X_c , X)  \right) \ \middle| \   \begin{matrix*}[l] 
\mathcal{M}(0) = 0, \\
\supnorm{ \rho } \le L_{r} \\ 
\supnorm{\kappa_u}\le \min\{ L_{u} \}  \\ 
\supnorm{  \kappa_s } \le \min\{ L_{s}\} 
\end{matrix*}  \right\},
\end{align*}
then $D\Lambda \in \Gamma_{3/2}$ and $\Gamma_2(\delta(\varepsilon)) \subset \Gamma_{3/2}$. Furthermore, from step \cref{C2Continuity1} of the proof of the previous proposition, it follows that $\Theta^{[2]} : \Gamma_{3/2} \to \Gamma_{3/2}$ is a contraction with respect to the $C^0$ norm. Therefore, $D\Lambda$ is the unique fixed point of $\Theta^{[2]}$ in $\Gamma_{3/2}$. However, $\Theta^{[2]} : \Gamma_{2}(\delta(\varepsilon)) \to \Gamma_{2}(\delta(\varepsilon))$ is also a contraction with respect to the $C^1$ norm for $\varepsilon$ sufficiently small. Let $\mathcal{M} \in \Gamma_2(\delta(\varepsilon))$ be the fixed point of $\Theta^{[2]}$. Then $\mathcal{M} \in \Gamma_{3/2}$ is a fixed point of $\Theta^{[2]}$, which means that $D\Lambda = \mathcal{M} \in \Gamma_2(\delta(\varepsilon))$. We conclude that $D\Lambda$ is $C^1$.
\end{proof}

We have now shown in two steps that there exists a $C^2$ center manifold. In particular, we used the existence of a $C^1$ center manifold to obtain a $C^2$ center manifold in the second step. Furthermore, to obtain the $C^1$ center manifold, we explicitly used that our dynamical system $F$ is at least $C^2$. Hence, with the current proof we cannot obtain a center manifold for a $C^1$ dynamical system. 

We  want to remark that if $F{}{}$ is $C^{0 + \text{Lip}}$, we could slightly alter the definition of $\Gamma_0$ and show that $\Theta^{}$ is a contraction with respect to the $C^0$ norm. Then both $K$ and $r$ would have been $C^{0 + \text{Lip}}$ and we would obtain a $C^{0 + \text{Lip}}$ center manifold and dynamical system.

Furthermore, if $F{}{}$ is $C^1$ instead of $C^2$, we could adapt our proof to obtain a $C^1$ center manifold and dynamical system if $X$ is uniformly convex, e.g. $X = \mathbb{R}^m$. In this case, we would also prove that $\Theta^{}$ is a contraction with respect to the $C^0$ norm, which would give us $C^{0 + \text{Lip}}$ functions $K{}{}$ and $r{}{}$. Using the results from \cite{Clarkson36}, we know that both $K{}{}$ and $r$ would be almost everywhere differentiable. We could still define the fixed point operator $\Theta^{[2]}$ and prove that $\Theta^{[2]}$ would be a contraction with respect to the $C^0$ norm. Using similar arguments as in the previous corollary, we could then show that $K{}{}$ and $r$ would be everywhere continuously differentiable. Therefore, we would obtain a $C^1$ center manifold with $C^1$ dynamical system if we start with a $C^1$ dynamical system $F:X \to X$ on a uniformly convex space $X$.

\section{A \texorpdfstring{$C^m$}{Cm} center manifold}\label{CmContinuitySection}

The final step of our proof scheme in \cref{ProofScheme} is inductively showing that the conjugacy is $C^n$. Similar to what we did in the $C^2$ case in the previous section, we will show that if the conjugacy is $C^m$, then the $m^{\text{th}}$ derivative will be $C^1$. We start again by defining a fixed point operator for the $m^{\text{th}}$ derivative of 
\begin{align*}
\Lambda \isdef \begin{pmatrix} r \\ k_u \\ k_s \end{pmatrix}.
\end{align*} 
To make the definition of the fixed point operator more insightful, we will use several lemmas before we define it. We start by stating Fa\`{a} di Bruno's formula for derivatives of compositions, see for instance \cite{Mennucci17}

\begin{lemma}[Fa\`{a} di Bruno's Formula] Let $X$, $Y$ and $Z$ be Banach spaces, and $f_1 :Y \to Z$ and $f_2 : X \to Y$ $C^m$ functions. Then the $m^{\text{th}}$ derivative of $f_1 \circ f_2$ is given by
\begin{align*}
D^m[f_1 \circ f_2](x)= \sum_{i=1}^m \sum_{\pi \in P_m^i}  D^i f_1(f_2(x)) \left(D^{\pi(1)}f_2(x), \dots  , D^{\pi(i)}f_2(x) \right),
\end{align*}
where $P_m^i$ is the set of ordered partitions of length $i$ of the set $\{1, \dots, m\}$.
\end{lemma}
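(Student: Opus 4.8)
The statement to prove is Fa\`{a} di Bruno's formula for derivatives of compositions of maps between Banach spaces. Let me sketch a proof.

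The plan is to prove this by induction on $m$. The base case $m=1$ is simply the chain rule: $D[f_1 \circ f_2](x) = Df_1(f_2(x)) Df_2(x)$, which matches the formula since $P_1^1$ consists of the single partition $\{\{1\}\}$.

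For the inductive step, I would assume the formula holds for $m$ and differentiate both sides once more. On the left this produces $D^{m+1}[f_1 \circ f_2](x)$. On the right, I differentiate each summand $D^i f_1(f_2(x))\bigl(D^{\pi(1)}f_2(x), \dots, D^{\pi(i)}f_2(x)\bigr)$ using the product/Leibniz rule for multilinear expressions. Differentiating the factor $D^i f_1(f_2(x))$ gives $D^{i+1}f_1(f_2(x))(Df_2(x), \cdot)$, which raises the order of the outer derivative by one and inserts a new block of size $1$; differentiating one of the inner factors $D^{\pi(j)}f_2(x)$ raises that block's size from $\pi(j)$ to $\pi(j)+1$. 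The main combinatorial task is then to check that, after collecting terms, the coefficient of each partition $\pi' \in P_{m+1}^{i'}$ appearing on the right is exactly $1$, i.e. that every ordered partition of $\{1,\dots,m+1\}$ arises exactly once from this differentiation process applied to the ordered partitions of $\{1,\dots,m\}$.

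The bookkeeping works as follows: given an ordered partition $\pi'$ of $\{1,\dots,m+1\}$ into blocks, consider the block containing the element $m+1$. If that block is a singleton $\{m+1\}$, then removing it yields an ordered partition of $\{1,\dots,m\}$ (with one fewer block), and $\pi'$ arises from differentiating the outer factor $D^if_1$ in that shorter partition. If the block containing $m+1$ has size $\ge 2$, removing $m+1$ from it yields an ordered partition of $\{1,\dots,m\}$ with the same number of blocks, and $\pi'$ arises from differentiating exactly that inner factor. These two cases are mutually exclusive and exhaustive, so each $\pi'$ is produced exactly once, giving coefficient $1$ and completing the induction.

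The main obstacle is purely notational/combinatorial: one must set up the ordered-partition formalism carefully enough that the Leibniz-rule expansion can be matched term-by-term with the partitions of the larger index set, and one must be careful about the symmetry of the multilinear maps $D^i f_1$ and $D^{\pi(j)} f_2$ so that reordering the arguments is legitimate. Since the paper only needs the formula as a structural tool (to read off which derivatives of $f_2$ can appear and with what ``weights'' in the highest-order term versus lower-order terms), I would keep the proof brief, emphasize the two-case partition argument above, and refer to \cite{Mennucci17} for the detailed verification rather than grinding through the full multi-index accounting.
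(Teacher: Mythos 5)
The paper does not actually prove this lemma: it is quoted as a known result with a pointer to \cite{Mennucci17}, so there is no in-paper argument to compare against. Your induction is the standard proof of Fa\`{a} di Bruno's formula and the outline is sound: the base case is the chain rule, the inductive step is the Leibniz rule applied to each summand, and the key bijection --- classify each partition of $\{1,\dots,m+1\}$ by whether the block containing $m+1$ is the singleton $\{m+1\}$ (coming from differentiating the outer factor $D^i f_1$) or a larger block (coming from differentiating the corresponding inner factor $D^{\pi(j)}f_2$) --- is exactly the right combinatorial mechanism, and your appeal to the symmetry of $D^{i+1}f_1$ and $D^{\pi(j)+1}f_2$ to reorder arguments is the correct way to justify the matching. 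The one point where you should be careful is the phrase ``coefficient exactly $1$ for each ordered partition'': if $P_m^i$ literally meant all $i!$ orderings of each set partition into $i$ blocks, the formula would overcount (for $m=2$ the term $D^2f_1(Df_2,Df_2)$ would appear twice, which contradicts the chain-rule computation), so $P_m^i$ must be read as the set partitions of $\{1,\dots,m\}$ into $i$ blocks written in a fixed canonical order; under that reading your Leibniz expansion, after using symmetry to move the newly created argument slot into canonical position, hits each partition of $\{1,\dots,m+1\}$ exactly once and the induction closes. Making that normalization explicit is the only thing separating your sketch from a complete proof.
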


Since we want to define a fixed point operator for the $m^{\text{th}}$ derivative of $\Lambda$ using $\Lambda = \Theta^{}(\Lambda)$, we want to isolate the $m^{\text{th}}$ derivatives from Fa\`{a} di Bruno's formula, and apply Fa\`{a} di Bruno's formula to the composition of three functions.

\begin{lemma}\label{FaaDiBrunoHighOrder} Let $X$, $Y$ and $Z$ be Banach spaces, and $f_1 :Y \to Z$ and $f_2 : X \to Y$ $C^m$ functions. 
\begin{enumerate}[label = {\roman*)}, ref={\cref{FaaDiBrunoHighOrder}\roman*)}]
\item\label{FaaDiBrunoHighOrder1} The $m^{\text{th}}$ derivative of $f_1 \circ f_2$ is given by
\begin{align*}
D^m[f_1 \circ f_2](x) &= Df_1(f_2(x)) D^mf_2(x) + D^mf_1(f_2(x)) \left( Df_2(x) \right)^{\otimes m}  + \mathcal{P}_m(f_1,f_2)(x)
\end{align*}
where we use the shorthand notation $\left( Df_2(x) \right)^{\otimes m} \isdef \underbrace{\left( Df_2(x) , \dots , Df_2(x) \right)}_{m \text{ times}}$ and $\mathcal{P}_m(f_1,f_2)(x) \isdef \sum_{i=2}^{m-1} \sum_{\pi \in P_m^i}  D^i f_1(f_2(x)) \left(D^{\pi(1)}f_2(x), \cdots ,D^{\pi(i)}f_2(x) \right)$.
\item\label{FaaDiBrunoHighOrder2} Let $f_3 : X \to X$ be another $C^m$ function, then we find
\begin{align*}
D^m[f_1 \circ f_2 \circ f_3](x) &= Df_1(z) \left( Df_2(y) \right) \left( D^mf_3(x) \right) +  Df_1(z)\left( D^mf_2(y) \right) \left( Df_3(x) \right)^{\otimes m} \\
					&\quad + D^m f_1(z) \left( Df_2(y) \right)^{\otimes m} \left( Df_3(x) \right)^{\otimes m} + \mathcal{P}_m(f_1,f_2)(y)\left(Df_3(x) \right)^{\otimes m} \\
					&\quad  + \mathcal{P}_m(f_1 \circ f_2, f_3)(x),
\end{align*}
where $z = f_2(f_3(x))$ and $y=f_3(x)$.
\end{enumerate}
\end{lemma}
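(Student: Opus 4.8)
The plan is to derive both identities directly from Fa\`a di Bruno's Formula by bookkeeping which terms involve the highest-order derivative $D^m$ of the innermost function and grouping the rest into the remainder $\mathcal{P}_m$.

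\textbf{Part i).} I would start from Fa\`a di Bruno's Formula applied to $f_1 \circ f_2$:
\begin{align*}
D^m[f_1 \circ f_2](x)= \sum_{i=1}^m \sum_{\pi \in P_m^i}  D^i f_1(f_2(x)) \left(D^{\pi(1)}f_2(x), \dots  , D^{\pi(i)}f_2(x) \right).
\end{align*}
The inner index $i$ runs from $1$ to $m$. I isolate the two extreme values of $i$. For $i=1$ the only ordered partition of $\{1,\dots,m\}$ of length $1$ is the whole set, contributing $Df_1(f_2(x))\,D^mf_2(x)$. For $i=m$ the only ordered partition of length $m$ is the partition into singletons, contributing $D^m f_1(f_2(x))(Df_2(x))^{\otimes m}$. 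Everything with $2\le i\le m-1$ is by definition $\mathcal{P}_m(f_1,f_2)(x)$. (When $m=2$ the sum defining $\mathcal{P}_m$ is empty and equals $0$; one should note this degenerate case but it causes no trouble since the two isolated terms already reproduce the known second-derivative chain rule.) This gives exactly the claimed decomposition; the step is essentially just relabelling of the sum.

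\textbf{Part ii).} I would write $f_1 \circ f_2 \circ f_3 = (f_1\circ f_2)\circ f_3$ and apply part i) with the outer function $f_1\circ f_2$ and inner function $f_3$:
\begin{align*}
D^m[f_1\circ f_2\circ f_3](x) &= D[f_1\circ f_2](f_3(x))\,D^mf_3(x) + D^m[f_1\circ f_2](f_3(x))\,(Df_3(x))^{\otimes m} \\
&\quad + \mathcal{P}_m(f_1\circ f_2, f_3)(x).
\end{align*}
Now I expand the first two terms. The first term uses the ordinary chain rule $D[f_1\circ f_2](y) = Df_1(f_2(y))Df_2(y)$ with $y=f_3(x)$, giving $Df_1(z)(Df_2(y))(D^mf_3(x))$ with $z=f_2(y)=f_2(f_3(x))$. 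For the second term I substitute the formula from part i) for $D^m[f_1\circ f_2](y)$, namely $Df_1(z)D^mf_2(y) + D^mf_1(z)(Df_2(y))^{\otimes m} + \mathcal{P}_m(f_1,f_2)(y)$, and then post-compose each summand with $(Df_3(x))^{\otimes m}$; here one uses that $\mathcal{L}^k$-valued expressions compose with $(Df_3(x))^{\otimes m}$ multilinearly (the $\otimes$ notation from \cref{ChainRuleExplained} and the surrounding discussion). Collecting the five resulting terms yields precisely the asserted identity.

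\textbf{Main obstacle.} The calculation is routine; the only thing requiring care is the correct interpretation of the nested $\otimes$-contractions — i.e., making sure that an expression like $Df_1(z)(D^mf_2(y))(Df_3(x))^{\otimes m}$ is read as the $k$-linear map obtained by first feeding $m$ copies of $Df_3(x)$ into the appropriate slots of $D^mf_2(y)\in\mathcal{L}^m(Y,\cdot)$ and then applying $Df_1(z)$, and that this is legitimate because all the maps involved are bounded multilinear operators between Banach spaces so that composition is associative in the required sense. I would state this reading once at the start (referring back to the conventions in \cref{NotationSection}) and then the rest is pure substitution. A minor secondary point is to flag the $m=2$ edge case where $\mathcal{P}_m\equiv 0$, so that part ii) reduces consistently to the second-order chain rule for a triple composition.
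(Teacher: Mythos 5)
Your proposal is correct and follows essentially the same route as the paper: part i) by isolating the $i=1$ and $i=m$ terms of Fa\`a di Bruno's formula (each coming from a unique ordered partition), and part ii) by applying part i) to $(f_1\circ f_2)\circ f_3$ and then substituting the chain rule and the part i) expansion for $D[f_1\circ f_2]$ and $D^m[f_1\circ f_2]$. Your additional remarks on the empty sum when $m=2$ and on the interpretation of the $\otimes$-contractions are sensible but not needed beyond what the paper already does.
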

\begin{proof} \textit{i)} The equality follows from Fa\`{a} di Bruno's formula and the fact that there is one ordered partition of $m$ of length 1 and one of length $m$.

\textit{ii)} The equality follows from applying Fa\`{a} di Bruno's formula twice. We first apply it to the composition of $f_1 \circ f_2$ and $f_3$, which gives
\begin{align*}
D^m[f_1 \circ f_2 \circ f_3](x) &= D[f_1 \circ f_2](f_3(x)) D^m f_3(x) + D^m[f_1 \circ f_2 ](f_3(x)) \left( Df_3(x) \right)^{\otimes m} \\
					&\quad  + \mathcal{P}_m(f_1 \circ f_2, f_3)(x).
\end{align*}
Then the first derivative of $f_1 \circ f_2$ is given by $Df_1(f_2)Df_2$, and we apply Fa\`{a} di Bruno's formula a second time for the $m^{\text{th}}$ derivative of $f_1 \circ f_2$ to find the desired equality.
\end{proof}

The third component of $\Theta^{}(\Lambda)$ contains the function $(A_c + r)^{-1}$. We want to express the $m^{\text{th}}$ derivative in terms of the $m^{\text{th}}$ derivative of $A_c + r$.

\begin{lemma}\label{HigherDerivativeInverse}Let $R = A_c + r :X_c \to X_c$ be an invertible $C^m$ function with inverse $T$. Then
\begin{align*}
D^m T = - DT D^mr(T) \left( DT \right)^{\otimes m} - DT \mathcal{P}_m(R,T) && \text{ for } m \ge 2.
\end{align*}
\end{lemma}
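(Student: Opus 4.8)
The plan is to differentiate the defining identity $R\circ T=\operatorname{Id}_{X_c}$ exactly $m$ times and solve the resulting linear equation for $D^mT$. Since $\operatorname{Id}_{X_c}$ is a linear map, $D^m\operatorname{Id}_{X_c}=0$ for every $m\ge 2$, which is precisely why the statement is restricted to $m\ge 2$. Before differentiating I would record that $T$ is itself $C^m$: this follows from the Inverse Function Theorem once one knows that $DR(x)=A_c+Dr(x)$ is invertible for every $x$, which holds in our setting because $\supnorm{Dr}\le L_r<\operatornorm{A_c^{-1}}^{-1}$ (see the discussion following \cref{GlobalDiffeomorphism}).

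The first step is to apply \cref{FaaDiBrunoHighOrder1} with $f_1=R$ and $f_2=T$, which gives
\[
0=D^m[R\circ T]=DR(T)\,D^mT+D^mR(T)\left(DT\right)^{\otimes m}+\mathcal{P}_m(R,T).
\]
The second step is to simplify the middle term: because $R=A_c+r$ with $A_c$ linear, $D^jR=D^jr$ for all $j\ge 2$, so in particular $D^mR(T)=D^mr(T)$ (and, although we will not need it, $\mathcal{P}_m(R,T)=\mathcal{P}_m(r,T)$ as well, since $\mathcal{P}_m$ only involves derivatives of orders between $2$ and $m-1$). Rearranging,
\[
DR(T)\,D^mT=-D^mr(T)\left(DT\right)^{\otimes m}-\mathcal{P}_m(R,T).
\]
The third step is to compose on the left with $DT$: by the Inverse Function Theorem $DT(x)=\left(DR(T(x))\right)^{-1}$, i.e.\ $DT$ is the pointwise inverse of $DR\circ T$, so $DT\,DR(T)=\operatorname{Id}$. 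This yields
\[
D^mT=-DT\,D^mr(T)\left(DT\right)^{\otimes m}-DT\,\mathcal{P}_m(R,T),
\]
which is the asserted formula.

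There is essentially no analytic difficulty here; the proof is just the three displays above, and the only points requiring a moment's care are bookkeeping ones: verifying that $T\in C^m$ so that $D^mT$ exists, and reading the product $DT\,D^mr(T)\left(DT\right)^{\otimes m}$ correctly — namely as the bounded operator $DT(x)$ applied to the output in $X_c$ of the symmetric $m$-linear map $D^mr(T(x))\left(DT(x),\dots,DT(x)\right)$, so that the right-hand side indeed lies in $\mathcal{L}^m(X_c,X_c)$, as does $D^mT(x)$. Both checks are routine, so I expect no real obstacle.
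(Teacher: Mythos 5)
Your proposal is correct and follows essentially the same route as the paper: apply Fa\`{a} di Bruno's formula (in the form of \cref{FaaDiBrunoHighOrder1}) to $R\circ T=\operatorname{Id}$, use that the $m^{\text{th}}$ derivative of the identity vanishes for $m\ge 2$ and that $D^mR=D^mr$, and then multiply on the left by $DT=(DR(T))^{-1}$. The additional remarks on $T\in C^m$ and on interpreting the operator products are correct but not needed beyond what the paper records.
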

\begin{proof}
We apply Fa\`{a} di Bruno's formula to $R \circ T$, and notice that its $m^{\text{th}}$ derivative is $0$ as $R \circ T = \operatorname{Id}$. Thus we find
\begin{align*}
DR(T) D^mT = - D^mr(T) \left(DT \right)^{\otimes m} - \mathcal{P}_m(R,T).
\end{align*}
The asserted identity follows by multiplying both sides by the inverse of $DR(T)$, which is $DT$.
\end{proof}
\begin{remark}\label{RemarkC2Difficult}
This lemma is another reason why we start the inductive step after we have proven $C^2$ smoothness, because the first derivative of $\operatorname{Id}$ does not vanish.
\end{remark}

\subsection{Another fixed point operator}

We can now take the $m^{\text{th}}$ derivative of the fixed point identity $\Lambda = \Theta^{}(\Lambda)$, where $\Theta^{}(\Lambda)$ is defined in \cref{FixedPointOperatorEquation}. We will do this component-wise. For the first component we find, by \cref{FaaDiBrunoHighOrder1},
\begin{align*}
D^m r &= A_c D^mk_c + Dg_c(K{}{})D^mK{}{}+D^mg_c(K{}{})\left(DK{}{} \right)^{\otimes m} + \mathcal{P}_m(g_c,K{}{})  \\ 
	&\quad-  Dk_c(R)D^mr - D^mk_c(R)\left(DR \right)^{\otimes m} - \mathcal{P}_m(k_c,R) \\
	&=f_{m,1} + Dg_c(K{}{})D^mK{}{} +  Dk_c(R)D^mr,
\end{align*}
where
\begin{align*}
f_{m,1} \isdef A_c D^mk_c +D^mg_c(K{}{})\left(DK{}{} \right)^{\otimes m} + \mathcal{P}_m(g_c,K{}{})  - D^mk_c(R)\left(DR \right)^{\otimes m} - \mathcal{P}_m(k_c,R).
\end{align*}
For the second component, we find with \cref{FaaDiBrunoHighOrder1}
\begin{align*}
D^mk_u &= A_u^{-1} \left( Dk_u(R)D^mr+D^mk_u(R)\left(DR \right)^{\otimes m} + \mathcal{P}_m(k_u,R) \right) \\
	& \quad - A_u^{-1} \left( Dg_u(K{}{})D^mK{}{}+D^mg_u(K{}{})\left(DK{}{} \right)^{\otimes m} + \mathcal{P}_m(g_u,K{}{}) \right)  \\
	&=f_{m,2} + A_u^{-1} \left( Dk_u(R)D^mr +  D^mk_u(R) (DR)^{\otimes m} - Dg_u(K{}{})D^mK{}{} \right),
\end{align*}
where
\begin{align*}
f_{m,2} \isdef  A_u^{-1} \left( \mathcal{P}_m(k_u,R)  -D^mg_u(K{}{})\left(DK{}{} \right)^{\otimes m} - \mathcal{P}_m(g_u,K{}{}) \right).
\end{align*}
Finally, for the third component we find from \cref{FaaDiBrunoHighOrder2,HigherDerivativeInverse}, with $T= (A_c + r)^{-1}$,
\begin{align*}
D^mk_s &= A_s \left( Dk_s(T)D^mT+D^mk_s(T)\left(DT \right)^{\otimes m} + \mathcal{P}_m(k_s,T) \right) \\
		&\quad + Dg_s(K{}{} \circ T) \left( DK{}{}(T) \right) \left( D^mT \right) +  Dg_s(K{}{} \circ T)\left( D^mK{}{}(T) \right) \left( DT \right)^{\otimes m} \\
					&\quad + D^m g_s(K{}{} \circ T) \left( DK{}{}(T) \right)^{\otimes m} \left( DT \right)^{\otimes m} + \mathcal{P}_m(g_s,K{}{})(T)\left(DT \right)^{\otimes m} \\
					&\quad  + \mathcal{P}_m(g_s \circ K{}{}, T) \\
		&=  A_s \Big( -Dk_s(T)DTD^mr(T)\left(DT\right)^{\otimes} - Dk_s(T)DT\mathcal{P}_m(R,T) \\
		&\quad +D^mk_s(T)\left(DT \right)^{\otimes m} + \mathcal{P}_m(k_s,T) \Big) \\
		&\quad - Dg_s(K{}{} \circ T) \left( DK{}{}(T) \right) DTD^mr(T)\left(DT\right)^{\otimes} - Dg_s(K(T))DT\mathcal{P}_m(R,T)   \\
		&\quad +  Dg_s(K{}{} \circ T)\left( D^mK{}{}(T) \right) \left( DT \right)^{\otimes m} + D^m g_s(K{}{} \circ T) \left( DK{}{}(T) \right)^{\otimes m} \left( DT \right)^{\otimes m}  \\
		&\quad + \mathcal{P}_m(g_s,K{}{})(T)\left(DT \right)^{\otimes m} + \mathcal{P}_m(g_s \circ K{}{}, T) \\
		&=f_{m,3} - A_s Dk_s (T) h_m(D^mr)  + A_s D^mk_s(T) \left( D T \right)^{\otimes m}  \\
		&\quad - Dg_s(K{}{} \circ T) DK{}{} (T) h_m(D^mr) + Dg_s(K{}{} \circ T) D^mK{}{}(T) \left( DT \right)^{\otimes m} ,
\end{align*}
where we define
\begin{align*}
f_{m,3} &\isdef  A_s \Big( - Dk_s(T)DT\mathcal{P}_m(R,T) + \mathcal{P}_m(k_s,T) \Big) - Dg_s(K(T))DT\mathcal{P}_m(R,T)    \\
		&\quad + D^m g_s(K{}{} \circ T) \left( DK{}{}(T) \right)^{\otimes m} \left( DT \right)^{\otimes m}  + \mathcal{P}_m(g_s,K{}{})(T)\left(DT \right)^{\otimes m} + \mathcal{P}_m(g_s \circ K{}{}, T)
\end{align*}
and
\begin{align*}
h_m(\rho)(x) &\isdef DT(x) \rho(T(x))(DT(x))^{\otimes m}.
\end{align*}

Hence, we introduce the fixed point operator, where we  use $\kappa = \tmatrix{D^mk_c}{\kappa_u}{\kappa_s}$, 
\begin{align}
\Theta^{[m+1]} &: \Gamma_{m+1} \isdef C^1_b(X_c, \mathcal{L}^m(X_c,X)) \to C^1(X_c, \mathcal{L}^m(X_c,X)),\nonumber\\
& \begin{pmatrix} \rho \\ \kappa_u \\ \kappa_s \end{pmatrix} \mapsto
\begin{pmatrix*}[l]
f_{m,1} +Dg_c(K{}{})\kappa - Dk_c(R) \rho  \vspace{3 pt} \\
f_{m,2} + A_u^{-1} \left( \kappa_u \circ R \right) (DR)^{\otimes m}  + A_u^{-1} Dk_u(R) \rho  -A_u^{-1} Dg_u(K{}{})\kappa  \vspace{3 pt} \\
f_{m,3}  - A_s Dk_s (T) h_m(\rho) + A_s \left( \kappa_s \circ T \right) \left( D T \right)^{\otimes m}  \\
		\qquad -  Dg_s(K{}{} \circ T)DK{}{} (T) h_m(\rho) + Dg_s(K{}{} \circ T) \left( \kappa \circ T \right) \left( DT \right)^{\otimes m} 
\end{pmatrix*}. \label{FixedPointOperatorCmEquation}
\end{align}
We note that if $\Lambda \in C^m_b(X_c, X)$, then the functions $f_{m,1}$, $f_{m,2}$ and $f_{m,3}$ are bounded, since we assume that $g \in C^m_b(X,X)$. Hence, if $\Lambda \in C^m_b(X_c, X)$, then we have that $\Theta^{[m+1]}: \Gamma_{m+1} \to \Gamma_{m+1}$.

\begin{proposition}\label{FixedPointOperatorCm} Let $\Lambda = \tmatrix{r}{k_u}{k_s} \in \Gamma_0$ be a $C^m$ fixed point of $\Theta^{}$ for $2 \le m <n$. If $\Lambda \in C^m_b(X_c, X)$, then $D^m\Lambda$ is a fixed point of $\Theta^{[m+1]} : \Gamma_{m+1} \to \Gamma_{m+1}$.
\end{proposition}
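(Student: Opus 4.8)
The plan is to verify that the component-by-component calculation carried out in the paragraphs preceding \cref{FixedPointOperatorCmEquation} is legitimate, i.e.\ that applying $D^m$ to the fixed point identity $\Lambda=\Theta^{}(\Lambda)$ produces precisely the system $D^m\Lambda=\Theta^{[m+1]}(D^m\Lambda)$. Since $\Lambda\in\Gamma_0$ is assumed to be a $C^m$ fixed point of $\Theta^{}$, the three scalar identities in \cref{FixedPointOperatorEquation} hold as equalities of $C^m$ maps, so each of them may be differentiated $m$ times.

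First I would treat the $r$-component: differentiate $r=A_ck_c+g_c\circ K{}{}-k_c\circ R$ exactly $m$ times and apply \cref{FaaDiBrunoHighOrder1} to the compositions $g_c\circ K{}{}$ and $k_c\circ R$. This splits each $m$-th derivative of a composition into its two top-order terms — here $Dg_c(K{}{})D^mK{}{}$ and $Dk_c(R)D^mR=Dk_c(R)D^mr$ — plus a remainder of type $\mathcal P_m$ that involves only derivatives of $K{}{}$, $R$ of order $\le m-1$ together with the top derivatives of $g_c$, $k_c$ evaluated at $K{}{}$, $R$; all of these remainder terms, together with $A_cD^mk_c$ and $D^mg_c(K{}{})(DK{}{})^{\otimes m}$, are exactly what is collected into $f_{m,1}$. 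The $k_u$-component is handled in the same way, producing $f_{m,2}$. For the $k_s$-component I would use \cref{FaaDiBrunoHighOrder2} for the triple composition $g_s\circ K{}{}\circ T$ and substitute the expression for $D^mT$ from \cref{HigherDerivativeInverse}; the pieces of $D^mT$ proportional to $D^mr$ are what generate the operator $h_m$, while the pieces proportional to $\mathcal P_m(R,T)$ and all genuinely lower-order terms are absorbed into $f_{m,3}$. Once these three identities are in hand, putting $\rho=D^mr$, $\kappa_u=D^mk_u$, $\kappa_s=D^mk_s$ and $\kappa=\tmatrix{D^mk_c}{D^mk_u}{D^mk_s}$ makes them coincide term for term with the three components of \cref{FixedPointOperatorCmEquation}, which is the assertion $D^m\Lambda=\Theta^{[m+1]}(D^m\Lambda)$.

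It remains to check that $D^m\Lambda$ lies in the set on which $\Theta^{[m+1]}$ acts. Since $\Lambda\in C^m_b(X_c,X)$, its $m$-th derivative is a bounded continuous map $X_c\to\mathcal L^m(X_c,X)$; moreover $\supnorm{Dr}\le L_r<\operatornorm{A_c^{-1}}^{-1}$ guarantees (via \cref{GlobalDiffeomorphism} and \cref{HigherDerivativeInverse}) that $T$ is $C^m$ with $DT,\dots,D^mT$ bounded, and $k_c,k_u,k_s$ are $C^m_b$, so the remainders $f_{m,1},f_{m,2},f_{m,3}$ are bounded because $g\in C^m_b(X,X)$; thus every composition and product appearing in \cref{FixedPointOperatorCmEquation} is well-defined when evaluated at $D^m\Lambda$. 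I would emphasise here that $\Theta^{[m+1]}$ never differentiates its argument $\tmatrix{\rho}{\kappa_u}{\kappa_s}$, so the formula makes sense already for merely $C^0_b$ inputs; accordingly the statement that $D^m\Lambda$ is ``a fixed point of $\Theta^{[m+1]}:\Gamma_{m+1}\to\Gamma_{m+1}$'' is to be read as the identity $D^m\Lambda=\Theta^{[m+1]}(D^m\Lambda)$ holding in $C^0_b$, the extra $C^1$ regularity of $D^m\Lambda$ being recovered only afterwards through the contraction argument, exactly as the $C^1$ regularity of $D\Lambda$ was recovered in \cref{C2Existence} using the auxiliary space $\Gamma_{3/2}$.

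The only real obstacle is bookkeeping: one must be scrupulous about which terms produced by the repeated application of Fa\`{a} di Bruno's formula are ``top order'' (kept explicit and multiplied by $D^m\Lambda$ or $D^mk_c$) and which are absorbed into the $f_{m,i}$, and in the third component one has to propagate the $\mathcal P_m(R,T)$ contribution coming from $D^mT$ correctly through the outer factors $Dk_s(T)$ and $Dg_s(K{}{}\circ T)DK{}{}(T)$. Beyond this careful accounting there is no analytic content — the proposition is essentially a reorganisation of the derivation immediately preceding it, so the proof can be kept to a sentence pointing to that computation.
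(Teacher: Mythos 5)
Your proposal is correct and follows essentially the same route as the paper: the paper's proof of \cref{FixedPointOperatorCm} is simply the derivation preceding \cref{FixedPointOperatorCmEquation} (differentiating the fixed point identity $m$ times via \cref{FaaDiBrunoHighOrder,HigherDerivativeInverse} and collecting the lower-order terms into the $f_{m,i}$), which is exactly the computation you verify. Your added remark that the fixed point identity holds already in $C^0_b$ and that the $C^1$ regularity of $D^m\Lambda$ is only recovered afterwards by the contraction argument is a fair and slightly more careful reading of the statement, consistent with how the paper handles the analogous point in \cref{C2Existence}.
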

\begin{proof}
This follows from the above discussion.
\end{proof}

\subsection{Another contraction}

We note that we can apply the estimates of \cref{DifferenceC2Norm} to terms such as $Dg_c(K{}{}) \kappa - Dg_c(K{}{}) \tilde{\kappa}$. However, we do not have estimates for terms such as $(\kappa_u \circ R) (DR)^{\otimes m} - (\tilde{\kappa}_u \circ R) (DR)^{\otimes m}$ with $\kappa_u,\tilde{\kappa}_u \in C^1_b(X_c , \mathcal{L}^m(X_c, X_c))$. Therefore, we start with some  preliminary results about the differences of products.

\begin{lemma}\label{DifferenceCmNorm} Let $X$, $Y$ and $Z$ be Banach spaces, $m \ge 2$ and $h \in C^1_b(X,Y)$.
\begin{enumerate}[label = {\roman*)}, ref={\cref{DifferenceCmNorm}\roman*)}]
\item\label{DifferenceCmNorm1} Let $f,g \in C^0_b(Y, \mathcal{L}^m(Y,Z))$. For $\tilde{h} \in C^0_b(X, \mathcal{L}(X,Y))$ we have the $C^0$-estimate
\begin{align*}
\supnorm{(f \circ h) \tilde{h}^{\otimes m} - (g \circ h)\tilde{h}^{\otimes m} } &\le \supnorm{\tilde{h}}^m \supnorm{f - g }.
\end{align*}
\item\label{DifferenceCmNorm2} Let $f,g \in C^1_b(X, \mathcal{L}^m(Y,Z))$. For $\tilde{h} \in C^1_b(X, \mathcal{L}(X,Y))$ we have the $C^1$-estimate
\begin{align*}
\supnorm{D [ (f \circ h) \tilde{h}^{\otimes m}] - D[  (g \circ h) \tilde{h}^{\otimes m} ] } &\le   \supnorm{Dh} \supnorm{\tilde{h}}^m \supnorm{Df - Dg} \\
						&\quad +m\supnorm{\tilde{h}}^{m-1} \supnorm{D\tilde{h}} \supnorm{f - g}.
\end{align*}
\end{enumerate}
\end{lemma}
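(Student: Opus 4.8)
The statement is a pair of elementary estimates for the difference of products of the form $(f \circ h)\tilde h^{\otimes m}$. Both follow from the triangle inequality, submultiplicativity of the operator norm, and (for part ii) the product rule for differentiation, exactly in the spirit of the proof of \cref{DifferenceC2Norm}. I would simply insert the intermediate term in each case and estimate the two resulting pieces.

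\textit{Proof of i).} Write
\begin{align*}
(f \circ h)\tilde h^{\otimes m} - (g \circ h)\tilde h^{\otimes m} = \bigl( (f-g) \circ h \bigr) \tilde h^{\otimes m}.
\end{align*}
For fixed $x \in X$, the multilinear operator $(f-g)(h(x)) \in \mathcal L^m(Y,Z)$ is evaluated on the $m$-tuple $(\tilde h(x), \dots, \tilde h(x))$, so by the definition of the operator norm of a multilinear map,
\begin{align*}
\bignorm{ (f-g)(h(x)) \tilde h(x)^{\otimes m} } \le \operatornorm{(f-g)(h(x))} \norm{\tilde h(x)}^m \le \supnorm{f-g}\, \supnorm{\tilde h}^m.
\end{align*}
Taking the supremum over $x$ gives the claimed $C^0$-estimate.

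\textit{Proof of ii).} Introduce $F \isdef f \circ h$ and $G \isdef g \circ h$, both in $C^1_b(X, \mathcal L^m(Y,Z))$, with $DF = Df(h)Dh$ (so $\supnorm{DF} \le \supnorm{Df}\supnorm{Dh}$ and likewise $\supnorm{DF - DG} \le \supnorm{Df - Dg}\supnorm{Dh}$). By the product rule,
\begin{align*}
D\bigl[ F \tilde h^{\otimes m} \bigr] - D\bigl[ G \tilde h^{\otimes m} \bigr] &= (DF - DG)\tilde h^{\otimes m} + (F - G) D\bigl[ \tilde h^{\otimes m} \bigr].
\end{align*}
The first term is bounded, as in part i), by $\supnorm{DF - DG}\supnorm{\tilde h}^m \le \supnorm{Dh}\supnorm{\tilde h}^m\supnorm{Df - Dg}$. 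For the second term, the derivative of $x \mapsto \tilde h(x)^{\otimes m}$ is, by the product (Leibniz) rule for the $m$-linear argument, a sum of $m$ terms in each of which one factor $\tilde h$ is replaced by $D\tilde h$; hence $\supnorm{D[\tilde h^{\otimes m}]} \le m \supnorm{\tilde h}^{m-1}\supnorm{D\tilde h}$, and $\supnorm{F - G} \le \supnorm{f - g}$. Combining the two bounds yields
\begin{align*}
\supnorm{D [ (f \circ h) \tilde h^{\otimes m}] - D[ (g \circ h) \tilde h^{\otimes m} ]} \le \supnorm{Dh}\supnorm{\tilde h}^m \supnorm{Df - Dg} + m\supnorm{\tilde h}^{m-1}\supnorm{D\tilde h}\supnorm{f - g},
\end{align*}
which is the assertion.

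\textbf{Main obstacle.} There is no real obstacle here; the only point requiring a little care is the correct bookkeeping of the derivative of the $m$-fold ``tensor power'' $\tilde h^{\otimes m}$ — recognizing that differentiating $x \mapsto (\tilde h(x), \dots, \tilde h(x))$ in the multilinear slot produces exactly $m$ summands each of norm at most $\supnorm{\tilde h}^{m-1}\supnorm{D\tilde h}$, giving the factor $m$ in the second term. Everything else is the same triangle-inequality-plus-submultiplicativity argument already used in \cref{DifferenceC2Norm}, and I would present it as briefly as that lemma's proof.
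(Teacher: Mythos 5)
Your proof is correct and follows essentially the same route as the paper: part i) via the definition of the operator norm of the $m$-linear map $(f-g)(h(x))$, and part ii) via the product rule, splitting into the $(DF-DG)\tilde h^{\otimes m}$ term and the $(F-G)D[\tilde h^{\otimes m}]$ term, with the Leibniz expansion of $D[\tilde h^{\otimes m}]$ supplying the factor $m$. Nothing further is needed.
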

\begin{proof} \textit{i)} Let $x \in X$ and recall that the norm of the $k$-linear operator $(f - g)(h(x))$ is given by
\begin{align*}
\operatornorm{(f - g)(h(x)} = \sup_{\substack{\norm{x_i} \le 1 \\ 1 \le i \le m}} \norm{(f - g)(h(x))(x_1, \dots, x_m)},
\end{align*}
from which the desired $C^0$-estimate follows.

\textit{iii)} For the $C^1$-estimate we use the product rule and triangle inequality to find
\begin{align*}
\supnorm{D [ (f \circ h) \tilde{h}^{\otimes m}] - D[  (g \circ h) \tilde{h}^{\otimes m} ]}	& = \supnorm{Df(h)\left(Dh,\tilde{h}^{\otimes m}\right) - Dg(h)\left(Dh,\tilde{h}^{\otimes m} \right)} \\
						&\quad + \supnorm{ (f \circ h) D[\tilde{h}^{\otimes m}] - (g \circ h) D[\tilde{h}^{\otimes m}]}.
\end{align*}
We note that $D[\tilde{h}^{\otimes m}] = \sum_{i=0}^{m-1} \left( \tilde{h}^{\otimes i}, D\tilde{h}, \tilde{h}^{\otimes m-1-i} \right)$, hence we find
\begin{align*}
\supnorm{D [ (f \circ h) \tilde{h}^{\otimes m}] - D[  (g \circ h) \tilde{h}^{\otimes m} ]} &\le \supnorm{Dh} \supnorm{D\tilde{h}}^m \supnorm{Df - Dg} \\
						&\quad + \sum_{i=1}^{m-1} \supnorm{\tilde{h}}^i\supnorm{D\tilde{h}} \supnorm{\tilde{h}}^{m-1-i}  \supnorm{ f -g},
\end{align*}
from which the desired estimate follows.
\end{proof}

\begin{proposition}\label{CmContinuity}  Let $m \ge 2$ and assume that $L_g$ and $L_c$ are small in the sense of \cref{SmallnessConditions} for $n = m$.  There exists an $\varepsilon_0 >0 $ such for all $\varepsilon < \varepsilon_0$ it holds that  if $\supnorm{D^2 g}, \supnorm{D^2k_c} \le  \varepsilon$ and the fixed point $\Lambda \in \Gamma_1(\delta(\varepsilon))$ of $\Theta^{}$ lies in $C^m_b(X_c,X)$, then $\Theta^{[m+1]} : \Gamma_{m+1} \to \Gamma_{m+1}$ is a contraction with respect to the $C^1$ norm.
\end{proposition}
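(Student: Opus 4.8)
The proof runs in parallel with those of \cref{C1Continuity} and \cref{C2Continuity}. Throughout, $\Lambda = \tmatrix{r}{k_u}{k_s} \in \Gamma_1(\delta(\varepsilon))$ denotes the given $C^m_b$ fixed point of $\Theta^{}$, so that $R = A_c + r$, $DR$, $T = R^{-1}$, $DT$, $K$ and $DK$ are \emph{fixed} bounded maps with $\supnorm{DR} \le \operatornorm{A_c} + L_r$, $\supnorm{DT} \le L_{-1}$, $\supnorm{DK} \le 1 + L_c$, and — because $\Lambda \in \Gamma_1(\delta(\varepsilon))$ — also $\supnorm{D^2r}, \supnorm{D^2k_u}, \supnorm{D^2k_s} \le \delta(\varepsilon)$, hence $\supnorm{D^2K} \le \max\{\varepsilon,\delta(\varepsilon)\}$ and, differentiating $R \circ T = \operatorname{Id}$ as in \cref{EstimateDT}, $\supnorm{D^2T} \le L_{-1}^3 \delta(\varepsilon)$. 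The key preliminary observation is that the source terms $f_{m,1},f_{m,2},f_{m,3}$ in \cref{FixedPointOperatorCmEquation} — which by \cref{FaaDiBrunoHighOrder,HigherDerivativeInverse} are built only from $g$, $k_c$, $R$, $T$, $K$ and their derivatives of order $\le m$, and are therefore bounded and $C^1$ since $g,k_c \in C^n_b$ with $m<n$ — do not depend on the argument $\tmatrix{\rho}{\kappa_u}{\kappa_s}$ of $\Theta^{[m+1]}$. Thus $\Theta^{[m+1]}$ is affine in its argument, so in every difference $\Theta^{[m+1]}(\mathcal{M}) - \Theta^{[m+1]}(\tilde{\mathcal{M}})$ the $f_{m,i}$ cancel, leaving only products and compositions of the fixed data with $\rho - \tilde\rho$, $\kappa_u - \tilde\kappa_u$, $\kappa_s - \tilde\kappa_s$. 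The three steps are then: (A) $\supnorm{\Theta^{[m+1]}(\mathcal{M}) - \Theta^{[m+1]}(\tilde{\mathcal{M}})} \le \theta_0 \supnorm{\mathcal{M} - \tilde{\mathcal{M}}}$ with $\theta_0 < 1$ independent of $\varepsilon$; (B) $\supnorm{D[\Theta^{[m+1]}(\mathcal{M})] - D[\Theta^{[m+1]}(\tilde{\mathcal{M}})]} \le (\theta + C_{m+1}(\varepsilon)) \onenorm{\mathcal{M} - \tilde{\mathcal{M}}}$ with $\theta < 1$ and $C_{m+1}(\varepsilon) \downarrow 0$; (C) choose $\varepsilon_0$ so that $\max\{\theta_0, \theta + C_{m+1}(\varepsilon)\} < 1$ for $\varepsilon < \varepsilon_0$.

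For step (A) I would argue component by component. The first component of the difference equals $Dg_c(K)(\kappa - \tilde\kappa) - Dk_c(R)(\rho - \tilde\rho)$, which $\supnorm{Dg_c} \le L_g$, $\supnorm{Dk_c} \le L_c$ and submultiplicativity bound by $(L_g + L_c)\supnorm{\mathcal{M} - \tilde{\mathcal{M}}}$. For the second component I would apply \cref{DifferenceCmNorm1} to bound $\supnorm{(\kappa_u \circ R)(DR)^{\otimes m} - (\tilde\kappa_u \circ R)(DR)^{\otimes m}} \le \supnorm{DR}^m \supnorm{\kappa_u - \tilde\kappa_u}$ and combine it with the elementary bounds for the $A_u^{-1}Dk_u(R)\rho$ and $A_u^{-1}Dg_u(K)\kappa$ terms. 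For the third component I need in addition a $C^0$ estimate for $h_m(\rho) - h_m(\tilde\rho) = DT\,\bigl((\rho - \tilde\rho) \circ T\bigr)(DT)^{\otimes m}$, bounded by $L_{-1}^{m+1}\supnorm{\rho - \tilde\rho}$, together with \cref{DifferenceCmNorm1} for the $(\kappa_s \circ T)(DT)^{\otimes m}$ and $(\kappa \circ T)(DT)^{\otimes m}$ terms and the factors $\operatornorm{A_s}$, $\supnorm{Dk_s} \le L_s$, $\supnorm{Dg_s} \le L_g$, $\supnorm{DK} \le 1 + L_c$. In each component the coefficient that emerges is one of the constants $\theta_{\tilde n, i}$ from \cref{SmallnessConditions}, so the maximum $\theta_0$ is $< 1$ by hypothesis.

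For step (B) I would differentiate each component and estimate the products that appear. Terms of the form $Dg(K)\kappa$ or $Dk(R)\rho$ are handled by \cref{DifferenceC2Norm2}, terms of the form $(\kappa_u \circ R)(DR)^{\otimes m}$ by \cref{DifferenceCmNorm2}, and for the third component I would first establish the $C^1$ analogue of \cref{DifferenceC2ComponentThree2} for expressions $h\,(f \circ T)(DT)^{\otimes m}$ and for $h_m(\rho)$, expressing $D[(DT)^{\otimes m}]$ through $D^2 r$ via $R \circ T = \operatorname{Id}$ exactly as in \cref{EstimateDT}. The reason this succeeds without any bound on $D\mathcal{M}$ over $\Gamma_{m+1}$ is that every term produced by the product rule beyond the ``principal'' ones carries a factor $\supnorm{D^2r}, \supnorm{D^2k_u}, \supnorm{D^2k_s} \le \delta(\varepsilon)$, $\supnorm{D^2K} \le \max\{\varepsilon,\delta(\varepsilon)\}$, $\supnorm{D^2T} \le L_{-1}^3\delta(\varepsilon)$, or $\supnorm{D^2g}, \supnorm{D^2k_c} \le \varepsilon$ — all of which tend to $0$ as $\varepsilon \downarrow 0$ by \cref{UniformSecondDerivativeBound} — while the principal terms reassemble into constants $\theta_{\tilde n, i}$ of \cref{SmallnessConditions} acting on $\onenorm{\mathcal{M} - \tilde{\mathcal{M}}}$, the derivative estimate picking up one extra power of $\supnorm{DR}$ resp. $\supnorm{DT}$ exactly as in the passage from $\theta_{1,i}$ to $\theta_{2,i}$ in the proof of \cref{C2Continuity}. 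This gives the bound of step (B) with $\theta < 1$ and $C_{m+1}(\varepsilon) \downarrow 0$; step (C) is then immediate, and combining (A) and (C) yields $\onenorm{\Theta^{[m+1]}(\mathcal{M}) - \Theta^{[m+1]}(\tilde{\mathcal{M}})} \le \lambda_{m+1} \onenorm{\mathcal{M} - \tilde{\mathcal{M}}}$ with $\lambda_{m+1} = \max\{\theta_0, \theta + C_{m+1}(\varepsilon)\} < 1$.

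I expect the main obstacle to be the third component, i.e. the terms built around $T = (A_c + r)^{-1}$. As already in \cref{DifferenceC2ComponentThree}, these require carrying out the $C^0$ and $C^1$ bookkeeping for differences of products of the shape $(\text{fixed } C^1 \text{ data}) \cdot (\kappa \circ T) \cdot (DT)^{\otimes m}$ and for $h_m(\rho) - h_m(\tilde\rho)$, which in turn forces control of $DT$, $D^2T$ and of the tensor powers $(DT)^{\otimes m}$ and their derivatives; assembling one or two lemmas in the spirit of \cref{DifferenceCmNorm} and \cref{DifferenceC2ComponentThree} that package these estimates is where essentially all of the effort goes. A minor but convenient point worth recording at the outset is that the (possibly $m$-dependent, high-order) source terms $f_{m,i}$ drop out of every difference, so that no quantity not controlled by the hypotheses of the proposition ever enters the contraction estimate.
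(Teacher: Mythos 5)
Your proposal is correct and follows essentially the same route as the paper's proof: the same three-step scheme (a $C^0$ contraction with constant $\theta_m=\max_i\theta_{m,i}$ independent of $\varepsilon$, a $C^1$ Lipschitz bound $\theta_{\tilde m,i}+C_{\tilde m,i}(\varepsilon)$ with $C_{\tilde m,i}(\varepsilon)\downarrow 0$, then a choice of $\varepsilon_0$), the same component-wise use of \cref{DifferenceC2Norm} and \cref{DifferenceCmNorm}, and the same key estimates for $h_m(\rho)-h_m(\tilde\rho)$ and $Dh_m(\rho)-Dh_m(\tilde\rho)$ via the bounds $\supnorm{DT}\le L_{-1}$ and $\supnorm{D^2T}\le L_{-1}^3\delta(\varepsilon)$. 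Your two structural observations — that the source terms $f_{m,i}$ cancel because $\Theta^{[m+1]}$ is affine, and that no derivative bound on $\mathcal{M}\in\Gamma_{m+1}$ is needed since every $\delta(\varepsilon)$- or $\varepsilon$-factor comes from the fixed data $\Lambda$, $g$, $k_c$ — are exactly what makes the paper's argument work on all of $\Gamma_{m+1}=C^1_b(X_c,\mathcal{L}^m(X_c,X))$.
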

\begin{proof} Let $\varepsilon > 0$ and $\supnorm{D^2 g}, \supnorm{D^2k_c} \le \varepsilon$. Let $\mathcal{M} = \tmatrix{\rho}{\kappa_u}{\kappa_s},  \tilde{\mathcal{M}}= \tmatrix{\tilde{\rho}}{\tilde{\kappa}_u}{ \tilde{\kappa}_s} \in \Gamma_{\tilde{m}}$, where we denote $\tilde{m}=m+1$.

To show that $\Theta^{[\tilde{m}]}$ is a $C^1$ contraction, we use the same steps as we used in the proofs of \cref{C1Continuity,C2Continuity}.

\begin{enumerate}[label=\Alph*)]
\item\label{CmContinuity1} We prove that $\Theta^{[\tilde{m}]}$ is a contraction with respect to the $C^0$ norm, independent of $\varepsilon$,

\item\label{CmContinuity2} We show the existence of a constant $\theta_{\tilde{m}}(\varepsilon)$ such that 
\begin{align*}
\supnorm{D[\Theta^{[\tilde{m}]}(\mathcal{M})] - D[\Theta^{[\tilde{m}]}(\tilde{\mathcal{M}})]} \le \theta_{\tilde{m}}(\varepsilon) \onenorm{\mathcal{M} - \tilde{\mathcal{M}}},
\end{align*}

\item\label{CmContinuity3} We show that $\varepsilon > 0$ can be chosen so that $\theta_{\tilde{m}}(\varepsilon) <1$, thus showing that $\Theta^{[\tilde{m}]}$ is a contraction with respect to the $C^1$ norm. 
\end{enumerate}

\textbf{Step \cref{CmContinuity1}} We want to find $\theta_m<1$ such that 
\begin{align*}
\supnorm{\Theta^{[\tilde{m}]}(\mathcal{M}) - \Theta^{[\tilde{m}]}(\tilde{\mathcal{M}})} \le \theta_m \supnorm{\mathcal{M} - \tilde{\mathcal{M}}}.
\end{align*}
As we did in the proofs of \cref{C1Continuity,C2Continuity}, we will find the contraction constant component-wise, i.e. we will show that
\begin{align*}
\supnorm{ \Theta^{[\tilde{m}]}_i(\mathcal{M}) - \Theta^{[\tilde{m}]}_i(\tilde{\mathcal{M}})} &\le \theta_{m,i} \supnorm{\mathcal{M} - \tilde{\mathcal{M}}}
\end{align*}
for $\theta_{m,i}$ given explicitly in equation \cref{ContractionConstants1,ContractionConstants2,ContractionConstants3} and $i=1,2,3$. Furthermore, we note that we will directly apply \cref{DifferenceC2Norm2,DifferenceCmNorm2} to obtain the estimates \cref{ CmContractionEquation1,, CmContractionEquation2,, CmContractionEquation3}

\underline{\textit{$\rho$-component:}} We recall that $\supnorm{Dg_c} \le L_g$ and $\supnorm{Dk_c} \le L_c$, which follows from assumption \cref{MainTheoremKnownBounds } of \cref{MainTheorem}. We estimate
\begin{align}
\supnorm{\Theta^{[\tilde{m}]}_1(\mathcal{M}) - \Theta^{[\tilde{m}]}_1(\tilde{\mathcal{M}})} &\le \left( L_g + L_c  \right) \supnorm{\mathcal{M} - \tilde{\mathcal{M}}} = \theta_{m,1} \supnorm{\mathcal{M} - \tilde{\mathcal{M}}}. \label{ CmContractionEquation1}
\end{align}

\underline{\textit{$\kappa_u$-component:}} 
Similarly, we have the estimates $\supnorm{Dg_u} \le L_g$, $\supnorm{(DR)} \le \operatornorm{A_c} + L_r $ and $\supnorm{k_u} \le L_u$, hence we find
\begin{align}
\supnorm{\Theta^{[\tilde{m}]}_2(\mathcal{M}) - \Theta^{[\tilde{m}]}_2(\tilde{\mathcal{M}})}	&\le \operatornorm{A_u^{-1}} \left((\operatornorm{A_c} + L_r)^m + L_u + L_g \right) \supnorm{\mathcal{M} - \tilde{\mathcal{M}}} \nonumber \\
									&= \theta_{m,2} \supnorm{\mathcal{M} - \tilde{\mathcal{M}}}. \label{ CmContractionEquation2}
\end{align} 

\underline{\textit{$\kappa_s$-component:}} Finally, we have the bounds $\supnorm{DT} \le L_{-1}$,  $\supnorm{Dk_s} \le L_s$, $\supnorm{Dg_s} \le L_g$, $\supnorm{DK{}{}} \le 1 + L_c$ and 
\begin{align}
\supnorm{h_m(\rho) - h_m(\tilde{\rho})} &= \supnorm{DT \left( \rho \circ T - \tilde{\rho} \circ T \right) (DT)^{\otimes m}}  
							\le L_{-1}^{m+1} \supnorm{\mathcal{M} - \tilde{\mathcal{M}}}. \label{HInverseDifference}
\end{align}
We infer that
\begin{align}
\supnorm{\Theta^{[\tilde{m}]}_3(\mathcal{M}) &- \Theta^{[\tilde{m}]}_3(\tilde{\mathcal{M}})} \nonumber \\
									&\le \left( \operatornorm{A_s} \left( L_s L_{-1}^{m+1}  +  L_{-1}^{m}  \right) + L_g(1+ L_c)L_{-1}^{m+1} + L_g L_{-1}^m \right) \supnorm{\mathcal{M} - \tilde{\mathcal{M}}} \nonumber \\
									&= \theta_{m,3} \supnorm{\mathcal{M} - \tilde{\mathcal{M}}}. \label{ CmContractionEquation3}
\end{align}

\underline{\textit{Contraction constant:}} We can now estimate $\supnorm{\Theta^{[\tilde{m}]}(\mathcal{M}) - \Theta^{[\tilde{m}]}(\tilde{\mathcal{M}})}$ with inequalities  \cref{ CmContractionEquation1, CmContractionEquation2, CmContractionEquation3}. We have
\begin{align}
\supnorm{\Theta^{[\tilde{m}]}(\mathcal{M}) - \Theta^{[\tilde{m}]}(\tilde{\mathcal{M}})} &= \max_{i=1,2,3} \left\{ \supnorm{\Theta^{[\tilde{m}]}_i(\mathcal{M}) - \Theta^{[\tilde{m}]}_i(\tilde{\mathcal{M}})} \right\} \nonumber \\
										&\le \max_{i=1,2,3} \left\{ \theta_{m,i} \supnorm{\mathcal{M} - \tilde{\mathcal{M}}} \right\} \nonumber \\
										&= \theta_m \supnorm{\mathcal{M} - \tilde{\mathcal{M}}}. \label{CmContractionEq1}
\end{align}
Here we define $\theta_m \isdef \max\left\{ \theta_{m,1}, \theta_{m,2}, \theta_{m,3} \right\}$.  We have assumed that \cref{SmallnessConditions} holds for $n = m$. Therefore, we have $\theta_{m,i}<1$ and thus $\theta_m <1$. We conclude that $\Theta^{[\tilde{m}]}$ is a contraction with respect to the $C^0$ norm. \vspace{1 \baselineskip}

\textbf{Step \cref{CmContinuity2}} Analogous to step \cref{CmContinuity1}, we want to prove the component-wise inequality
\begin{align*}
\supnorm{D[\Theta^{[\tilde{m}]}_i(\mathcal{M})] - D[\Theta^{[\tilde{m}]}_i(\tilde{\mathcal{M}})]} &\le \left(\theta_{\tilde{m},i} + C_{\tilde{m},i}(\varepsilon) \right) \onenorm{\mathcal{M} - \tilde{\mathcal{M}}},
\end{align*} 
with $\theta_{\tilde{m},i}$ defined in \cref{ContractionConstants1,ContractionConstants2,ContractionConstants3} and $C_{\tilde{m},i}$ will be defined during the proof. We note that $\tmatrix{r}{k_u}{k_s} \in \Gamma_1(\delta(\varepsilon))$, hence $\supnorm{D^2r},\supnorm{D^2k_u},\supnorm{D^2k_s} \le \delta = \delta(\varepsilon)$.

\underline{\textit{$\rho$-component:}} Using the same estimates as we did for the $\rho$-component in step \cref{C2Continuity2} of the proof of \cref{C2Continuity}, we find
\begin{align}
\supnorm{D[\Theta^{[\tilde{m}]}_1(\mathcal{M})] - D[\Theta^{[\tilde{m}]}_1(\tilde{\mathcal{M}})]}	&\le \left( L_g + (1 + L_c) \varepsilon + L_c + ( \operatornorm{A_c}+ L_r) \varepsilon \right) \onenorm{\mathcal{M} - \tilde{\mathcal{M}}} \nonumber \\
							&\le \left( \theta_{\tilde{m},1} + C_{\tilde{m},1}(\varepsilon)\right) \onenorm{\mathcal{M} - \tilde{\mathcal{M}}}, \label{ CmContractionEquation4}
\end{align}
where we define $C_{\tilde{m},1}(\varepsilon) \isdef \left( 1 + L_c + \operatornorm{A_c} +  L_r \right)\varepsilon$.

\underline{\textit{$\kappa_u$-component:}} We have
\begin{align}
\supnorm{D[\Theta^{[\tilde{m}]}_2(\mathcal{M})] &- D[\Theta^{[\tilde{m}]}_2(\tilde{\mathcal{M}})]} \nonumber \\
									 &\qquad \quad \le  \operatornorm{A_u^{-1}} \Big( \supnorm{D[(\kappa_u \circ R)(DR)^{\otimes m}]-D[(\tilde{\kappa}_u \circ R)(DR)^{\otimes m}]} \nonumber \\
									&\qquad \quad \quad +   \supnorm{D[Dk_u(R) \rho] - D[Dk_u(R) \tilde{\rho}]} \nonumber \\
									&\qquad \quad \quad +  \supnorm{ D[ Dg_u(K{}{}) \kappa] - D[Dg_u(K{}{}) \tilde{\kappa}]}  \Big). \label{uComponentEqCm1}
\end{align}			
By applying \cref{DifferenceC2Norm2,DifferenceCmNorm2} we find the estimates
\begin{align*}
\supnorm{D[( \kappa_u \circ R) (DR)^{\otimes m}] - D[( \tilde{\kappa}_u \circ R) &(DR)^{\otimes m}]}   \\
							&\le (\operatornorm{A_c} + L_r)^{m+1} \supnorm{D\kappa_u - D \tilde{\kappa}_u} \\
							&\quad + m (\operatornorm{A_c} + L_r)^{m-1} \delta(\varepsilon)\supnorm{\kappa_u - \tilde{\kappa}_u}, \\
\supnorm{D[Dk_u(R) \rho] - D[Dk_u(R) \tilde{\rho}]}   &\le \left( (\operatornorm{A_c} + L_r) \delta(\varepsilon )+ L_u  \right)\onenorm{\mathcal{M} - \tilde{\mathcal{M}}}, \\
\supnorm{ D[ Dg_u(K{}{}) \kappa] - D[Dg_u(K{}{}) \tilde{\kappa}]}  &\le  \left( (1 + L_c) \varepsilon + L_g \right) \onenorm{\mathcal{M} -\tilde{\mathcal{M}}}.
\end{align*}
By combining these with \cref{uComponentEqCm1} we estimate
\begin{align}
\supnorm{D[\Theta^{[\tilde{m}]}_2(\mathcal{M})] - D[\Theta^{[\tilde{m}]}_2(\tilde{\mathcal{M}})]} &\le \operatornorm{A_u^{-1}} \Big(    ( \operatornorm{A_c} + L_r)^{\tilde{m}} + L_g + L_u \nonumber \\
							&\quad + \left( m( \operatornorm{A_c} + L_r)^{m-1}   +  \operatornorm{A_c} +L_r \right)\delta(\varepsilon)  \nonumber \\
							&\quad + (1 + L_c)\varepsilon \Big) \onenorm{\mathcal{M} - \tilde{\mathcal{M}}} \nonumber \\
							&\le \left( \theta_{\tilde{m},2} + C_{\tilde{m},2}(\varepsilon)\right) \onenorm{\mathcal{M} - \tilde{\mathcal{M}}}, \label{ CmContractionEquation5}
\end{align}
where we define 
\begin{align*}C_{\tilde{m},2}(\varepsilon) \isdef \operatornorm{A_u^{-1}} ((1+ L_c)\varepsilon + (\operatornorm{A_c} + L_r + m(\operatornorm{A_c} + L_r)^{m-1} ))\delta(\varepsilon).
\end{align*}

\underline{\textit{$\kappa_s$-component:}} We have
\begin{align}
\supnorm{D[\Theta^{[\tilde{m}]}_3(\mathcal{M})] &- D[\Theta^{[\tilde{m}]}_3(\tilde{\mathcal{M}})]} \nonumber \\
									&\le \operatornorm{A_s} \supnorm{D[Dk_s(T)h_m(\rho)] - D[Dk_s(T) h_m(\tilde{\rho})]} \nonumber \\
									&\quad +\operatornorm{A_s} \supnorm{D[(\kappa_s \circ T)(DT)^{\otimes m}] -D[(\tilde{\kappa}_s \circ T)(DT)^{\otimes m}]} \nonumber \\
									&\quad + \supnorm{D[Dg_s(K{}{} \circ T)DK{}{}(T)h_m(\rho)] - D[Dg_s(K{}{} \circ T)DK{}{}(T)h_m(\tilde{\rho})]} \nonumber \\
									&\quad + \supnorm{D[Dg_s(K{}{} \circ T)( \kappa \circ T) (DT)^{\otimes m}] - D[Dg_s(K{}{} \circ T)(\tilde{\kappa} \circ T) (DT)^{\otimes m}]}. \label{ sComponentEqCm1}
\end{align}
Before we apply \cref{DifferenceC2Norm2}, we start by deriving an upper bound for $Dh_m(\rho) - Dh_m(\tilde{\rho})$.
\begin{align*}
Dh_m(\rho) &=D^2T \left( \operatorname{Id}, (\rho \circ T)(DT)^{\otimes m} \right) + DT \left( D\rho(T) \right)\left(DT, (DT)^{\otimes m} \right)  \\
			&\quad + DT (\rho \circ T) \sum_{i=0}^{m-1} \left( (DT)^{\otimes i}, D^2T, (DT)^{\otimes m-1-i} \right).
\end{align*}
From \cref{HigherDerivativeInverse} we see that $D^2T$ is bounded by $\supnorm{DT D^2r(T) \left(DT\right)^{\otimes 2}}$ since $\mathcal{P}_2 = 0$. Using the estimates $\supnorm{DT} \le L_{-1}$ and $\supnorm{D^2T} \le L_{-1}^3 \delta(\varepsilon)$, see \cref{EstimateDT} for the latter bound, we find
\begin{align}
\supnorm{Dh_m(\rho) - Dh_m(\tilde{\rho})} \le \left( L_{-1}^{m+3} \delta(\varepsilon) + L_{-1}^{m+2} + m L_{-1}^{m+3}\delta(\varepsilon) \right) \onenorm{\rho - \tilde{\rho}}. \label{DHInverseDifference}
\end{align}
We can now apply \cref{DifferenceC2Norm2} and estimate \cref{HInverseDifference} for the bound on $h_m(\rho) - h_m(\tilde{\rho})$ to get
\begin{align}
\supnorm{D[Dk_s&(T)h_m(\rho)] - D[Dk_s(T) h_m(\tilde{\rho})]} \nonumber \\
				& \le  \supnorm{D^2k_s} \supnorm{DT} \supnorm{h_m(\rho) - h_m(\tilde{\rho})} + \supnorm{Dk_s}\supnorm{Dh_m(\rho) - Dh_m(\tilde{\rho})} \nonumber \\
				& \le \left( L_{-1}^{m+2} \delta(\varepsilon) + (m+1)L_{-1}^{m+3}L_s \delta(\varepsilon)  + L_{-1}^{m+2} L_s \right)\onenorm{\mathcal{M} - \tilde{\mathcal{M}}}. \label{ sComponentCmEq1} 
\end{align}
By applying \cref{DifferenceCmNorm2} we find
\begin{align}
\supnorm{D[(\kappa_s \circ T)&(DT)^{\otimes m}] -D[(\tilde{\kappa}_s \circ T)(DT)^{\otimes m}]} \nonumber \\
				 &\quad \le \supnorm{DT} \supnorm{(DT)^{\otimes m}}  \supnorm{D\kappa_s - D \tilde{\kappa}_s} + \supnorm{D (DT)^{\otimes m}} \supnorm{\kappa_s - \tilde{\kappa}_s} \nonumber \\
				 &\quad \le \left(  L_{-1}^{m+1} + m L_{-1}^{m+2} \delta(\varepsilon) \right) \onenorm{\mathcal{M} - \tilde{\mathcal{M}}}.  \label{ sComponentCmEq2}
\end{align}
For the third term of the right hand side of \cref{ sComponentEqCm1}, we take $x \mapsto Dg_s(K{}{}(x))DK{}{}(x)$ for the functions $f_1$ and $g_1$ in \cref{DifferenceC2Norm2} and use $T$ as $h$. Recall that $\gamma(\varepsilon) = \max\{ \varepsilon,\delta(\varepsilon)\}$, so that
\begin{align*}
\supnorm{D[ Dg_s(K{}{})DK{}{}]} \le \supnorm{D^2g_s(DK{}{},DK{}{})} + \supnorm{Dg_s D^2K{}{}} \le (1+L_c)^2 \varepsilon + L_g \gamma(\varepsilon).
\end{align*}
Hence we find the estimate, using \cref{DifferenceC2Norm2} and \cref{DHInverseDifference,HInverseDifference},
\begin{align}
\supnorm{D[Dg_s(K{}{} \circ T)&DK{}{}(T)h_m(\rho)] - D[Dg_s(K{}{} \circ T)DK{}{}(T)h_m(\tilde{\rho})]} \nonumber \\
			 &\le \supnorm{D[Dg_s(K{}{})DK{}{}]} \supnorm{DT} \supnorm{h_m(\rho) - h_m(\tilde{\rho})} \nonumber \\
			&\quad + \supnorm{Dg_s(K{}{})DK{}{}} \supnorm{Dh_m(\rho) - Dh_m(\tilde{\rho})} \nonumber \\
			&\le \left( L_{-1}^{m+2} L_g \gamma(\varepsilon) + (m+1) L_{-1}^{m+3} L_g ( 1 + L_c) \delta(\varepsilon) \right) \onenorm{\mathcal{M} - \tilde{\mathcal{M}}} \label{ sComponentCmEq3} \\
			&\quad +  L_{-1}^{m+2} L_g(1+L_c)  \onenorm{\mathcal{M} - \tilde{\mathcal{M}}} + L_{-1}^{m+2} (1+L_c)^2\varepsilon \onenorm{\mathcal{M} - \tilde{\mathcal{M}}}. \label{ sComponentCmEq4}
\end{align}
For the final term, we will apply \cref{DifferenceCmNorm}, which gives
\begin{align}
\supnorm{D[Dg_s(K{}{} &\circ T)\kappa(T) (DT)^{\otimes m}] - D[Dg_s(K{}{} \circ T)\tilde{\kappa}(T) (DT)^{\otimes m}]} \nonumber \\
		&\le \supnorm{D^2g_s} \supnorm{D[K{}{} \circ T]} \supnorm{(\kappa \circ T)(DT)^{\otimes m} - (\tilde{\kappa} \circ T)(DT)^{\otimes m}} \nonumber \\
		&\quad + \supnorm{Dg_s} \supnorm{D[(\kappa \circ T)(DT)^{\otimes m}] - D[(\tilde{\kappa} \circ T)(DT)^{\otimes m}]} \nonumber \\
		&\le \supnorm{D^2g_s} \supnorm{DK{}{}}\supnorm{DT} \supnorm{\kappa - \tilde{\kappa}} \supnorm{(DT)^{\otimes m}} \nonumber \\
		&\quad + L_g \left( \supnorm{DT} \supnorm{(DT)^{\otimes m}} + \supnorm{D[(DT)^{\otimes m}]} \right)\onenorm{\kappa - \tilde{\kappa}} \nonumber \\
		&\le  L_{-1}^{m+1} (1 + L_c) \varepsilon \onenorm{\mathcal{M} - \tilde{\mathcal{M}}} + \left( L_g L_{-1}^{m+1} + m L_g L_{-1}^{m+2} \delta(\varepsilon) \right) \onenorm{\mathcal{M} - \tilde{\mathcal{M}}}. \label{ sComponentCmEq5}
\end{align}
We first recall that \cref{ sComponentCmEq1, sComponentCmEq2} should be multiplied by $\operatornorm{A_s}$. We will collect the terms without $\varepsilon$ in \cref{ sComponentCmEq1, sComponentCmEq2, sComponentCmEq4}, which sum up to $\theta_{\tilde{m},3}$. Then we collect the terms containing $\delta(\varepsilon)$ and $\gamma(\varepsilon)$ in \cref{ sComponentCmEq1,, sComponentCmEq2,, sComponentCmEq3,, sComponentCmEq5}. We estimate $\delta(\varepsilon)$ by $\gamma(\varepsilon)$, so that those terms together are bounded by $\tilde{m}\theta_{\tilde{m}+1,3} \gamma(\varepsilon)$. Together with terms containing $\varepsilon$, we get
\begin{align*}
C_{\tilde{m},3}(\varepsilon) \isdef \tilde{m}\theta_{\tilde{m}+1,3} \gamma(\varepsilon) +L_{-1}^{\tilde{m}}(1+L_c)\varepsilon + L_{-1}^{\tilde{m}+1}(1+L_c)^2\varepsilon.
\end{align*}
Finally, we conclude that \cref{ sComponentEqCm1} reduces to
\begin{align}
\supnorm{D[\Theta^{[\tilde{m}]}_3(\mathcal{M})] - D[\Theta^{[\tilde{m}]}_3(\tilde{\mathcal{M}})]} &\le \left( \theta_{\tilde{m},3} + C_{\tilde{m},3}(\varepsilon) \right) \onenorm{\mathcal{M} - \tilde{\mathcal{M}}}. \label{ CmContractionEquation6}
\end{align}

\underline{\textit{Lipschitz constant:}} Inequalities \cref{ CmContractionEquation4, CmContractionEquation5, CmContractionEquation6} give
\begin{align}
\supnorm{D[\Theta^{[\tilde{m}]}(\mathcal{M})] - D[\Theta^{[\tilde{m}]}(\tilde{\mathcal{M}})]} &= \max_{i=1,2,3} \left\{ \supnorm{D[\Theta^{[\tilde{m}]}_i(\mathcal{M})] - D[\Theta^{[\tilde{m}]}_i(\tilde{\mathcal{M}})]} \right\} \nonumber \\
										&\le \max_{i=1,2,3} \left\{( \theta_{\tilde{m},i} + C_{\tilde{m},i}(\varepsilon)) \onenorm{\mathcal{M} - \tilde{\mathcal{M}}} \right\} \nonumber \\
										&\le \theta_{\tilde{m}}(\varepsilon) \onenorm{\mathcal{M} - \tilde{\mathcal{M}}}. \label{CmContractionEq2}
\end{align}
Here we define $\theta_{\tilde{m}}(\varepsilon) \isdef \max\left\{ \theta_{\tilde{m},1}, \theta_{\tilde{m},2}, \theta_{\tilde{m},3} \right\} + \max \{ C_{\tilde{m},1}(\varepsilon), C_{\tilde{m},2}(\varepsilon), C_{\tilde{m},3}(\varepsilon)\}$. \vspace{1 \baselineskip}

\textbf{Step \cref{CmContinuity3}} From \cref{SmallnessConditions} it follows that 
\begin{align*}
\max\left\{ \theta_{\tilde{m},1}, \theta_{\tilde{m},2}, \theta_{\tilde{m},3} \right\} < 1.
\end{align*}
As $\delta(\varepsilon) \downarrow 0$ when $\varepsilon \downarrow 0$, see \cref{UniformSecondDerivativeBound}, we have
\begin{align*}
\lim_{\varepsilon \to 0} \max \{ C_{\tilde{m},1}(\varepsilon), C_{\tilde{m},2}(\varepsilon), C_{\tilde{m},3}(\varepsilon)\} = 0.
\end{align*}
Therefore, we can find an $\varepsilon_0 > 0 $ such that $\theta_{\tilde{m}}(\varepsilon) < 1$ for all $\varepsilon < \varepsilon_0$. Then estimates  \cref{CmContractionEq1,CmContractionEq2} give
\begin{align*}
\onenorm{\Theta^{[\tilde{m}]}(\mathcal{M}) &- \Theta^{[\tilde{m}]}(\tilde{\mathcal{M}})} \\
											&= \max\left\{ \supnorm{\Theta^{[\tilde{m}]}(\mathcal{M}) - \Theta^{[\tilde{m}]}(\tilde{\mathcal{M}})}, \supnorm{D[\Theta^{[\tilde{m}]}(\mathcal{M})] - D[\Theta^{[\tilde{m}]}(\tilde{\mathcal{M}})]} \right\} \\
											&\le \max \left\{ \theta_{m} \supnorm{\mathcal{M} - \tilde{\mathcal{M}}}, \theta_{\tilde{m}}(\varepsilon) \onenorm{\mathcal{M} - \tilde{\mathcal{M}}} \right\} \\
											&\le \lambda_{\tilde{m}} \onenorm{\mathcal{M} - \tilde{\mathcal{M}}}.
\end{align*}
Here we define the contraction constant $\lambda_{\tilde{m}} \isdef \max\{\theta_{m} , \theta_{\tilde{m}}(\varepsilon) \}$ which is smaller than $1$ by our previous discussion. We conclude that $\Theta^{[\tilde{m}]}: \Gamma_{\tilde{m}} \to \Gamma_{\tilde{m}}$ is a contraction with respect to the $C^1$ norm for all $\varepsilon < \varepsilon_0$.
\end{proof}

\begin{corollary}\label{CnExistence}
Let $\varepsilon >0$ be such that $F{}{} :X \to X$ satisfies the conditions of \cref{MainTheorem,C1Continuity,C2Continuity,CmContinuity}.  Then the image of $K{}{}$ is a $C^n$ center manifold for $F{}{}$.
\end{corollary}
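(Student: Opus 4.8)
The plan is to run the bootstrapping induction to its conclusion. By \cref{C1Existence} (resting on \cref{C1Continuity} and \cref{FixedPointOperatorProposition}), for $\varepsilon$ small enough there is a fixed point $\Lambda = \tmatrix{r}{k_u}{k_s} \in \Gamma_0$ of $\Theta^{}$, and $K{}{} = \iota + \tmatrix{k_c}{k_u}{k_s}$ parameterizes a $C^1$ center manifold with conjugate dynamics $R = A_c + r$. By \cref{C2Existence}, this same $\Lambda$ is in fact $C^2$. The remaining task is to show, by induction on $m$ with $2 \le m < n$, that if $\Lambda$ is $C^m$ then it is $C^{m+1}$; after $n - 2$ steps this yields that $\Lambda$, and hence $K{}{}$, is $C^n$, and since $K{}{}$ still satisfies \cref{ConjugacyEquation} and the tangency and bound properties recorded in $\Gamma_0$, its image is a $C^n$ center manifold.

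For the inductive step I would argue exactly as in the proof of \cref{C2Existence}, but with $\Theta^{[m+1]}$ in place of $\Theta^{[2]}$. Assume $\Lambda$ is $C^m$ and bounded in $C^m$; this is where I would have to be slightly careful, noting that the fixed-point bounds in $\Gamma_0$ together with $g \in C^n_b$ force $\Lambda \in C^m_b(X_c,X)$, so that the inhomogeneous terms $f_{m,1}, f_{m,2}, f_{m,3}$ in \cref{FixedPointOperatorCmEquation} are genuinely bounded and $\Theta^{[m+1]}: \Gamma_{m+1} \to \Gamma_{m+1}$ is well-defined. By \cref{FixedPointOperatorCm}, $D^m\Lambda$ is a fixed point of $\Theta^{[m+1]}$. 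By \cref{CmContinuity}, for $\varepsilon$ small enough $\Theta^{[m+1]}$ is a $C^1$-contraction on $\Gamma_{m+1}$; it is also a $C^0$-contraction on the larger set $\Gamma_{3/2}$ (cf.\ step \cref{CmContinuity1} of that proof, exactly as in the $C^2$ case), which contains both $D^m\Lambda$ and the $C^1$ fixed point of $\Theta^{[m+1]}$. By uniqueness of the $C^0$-contraction fixed point these two coincide, so $D^m\Lambda$ is $C^1$, i.e.\ $\Lambda$ is $C^{m+1}$. Note that the smallness requirements accumulate: at stage $m$ we need \cref{SmallnessConditions} for $n = m$ and an $\varepsilon$ below the threshold $\varepsilon_0(m)$ of \cref{CmContinuity}; since there are only finitely many stages up to $n$, taking the minimum of the finitely many $\varepsilon_0$ and assuming \cref{SmallnessConditions} for $n$ (which implies it for all $\tilde n \le n$) suffices, and this is precisely the hypothesis of the corollary.

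The one genuinely delicate point — and the step I expect to be the main obstacle to write cleanly rather than to discover — is the bootstrapping bookkeeping: one must track that the single fixed point $\Lambda$ produced once and for all by \cref{C1Existence} is the object whose regularity is being upgraded at every stage, that the various fixed-point sets $\Gamma_1(\delta(\varepsilon))$, $\Gamma_2(\delta(\varepsilon))$, $\Gamma_{3/2}$, $\Gamma_{m+1}$ nest correctly so that the $C^0$-uniqueness argument can identify $D^m\Lambda$ with the $C^1$-contraction fixed point, and that the hypothesis ``$\Lambda \in C^m_b(X_c,X)$'' in \cref{CmContinuity} and \cref{FixedPointOperatorCm} is met at each step. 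Once this is laid out, the conclusion is immediate: after the induction, $K{}{} \in C^n(X_c,X)$ and $r \in C^n(X_c,X_c)$ satisfy \cref{ConjugacyEquation} with $DK{}{}(0) = \tmatrix{\operatorname{Id}}{0}{}$, so $\operatorname{im} K{}{}$ is an $F{}{}$-invariant manifold tangent to $X_c$ at $0$, hence a $C^n$ center manifold, completing the proof of \cref{CnExistence} and thereby of \cref{MainTheorem}.
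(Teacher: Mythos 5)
Your proposal is correct and follows essentially the same route as the paper, which proves \cref{CnExistence} by exactly this induction on $m$: use \cref{FixedPointOperatorCm} and \cref{CmContinuity} to identify $D^m\Lambda$ with the $C^1$ fixed point of $\Theta^{[m+1]}$ via the $C^0$-uniqueness argument from \cref{C2Existence}. Your additional bookkeeping (boundedness of $\Lambda$ in $C^m$ at each stage and taking the minimum over the finitely many thresholds $\varepsilon_0(m)$) is exactly the detail the paper leaves implicit.
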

\begin{proof}
The proof follows by induction on the smoothness of the conjugacy and the conjugate dynamics, with similar arguments as we had in the proof of \cref{C2Continuity}.
\end{proof}

\section{Existence and uniqueness of the center manifold}\label{UniquenessSection}

\subsection{Proof of \texorpdfstring{\cref{MainTheorem}}{the main theorem}}\label{ProofOfMainTheorem}

From \cref{CnExistence} it follows that there exists an $\varepsilon >0$ such that  if $\supnorm{D^2g}$ and $\supnorm{D^2k_c}$ are smaller than $\varepsilon$, then there exists a $C^n$ center manifold for $F{}{}: X \to X$. However, in \cref{MainTheorem}, the only assumption on the second derivative of $k_c$ and $g$ is that they are both bounded. We will use a simple scaling argument to show that we can bound $D^2k_c$ and $D^2g$ by $\varepsilon$ without affecting the assumed bound on $Dk_c$ and $Dg$ from \cref{MainTheorem}.

\begin{lemma}\label{SecondDerivativeBounded} Let $\varepsilon > 0$ and define $h^\varepsilon(x) \isdef \varepsilon^{-1} h(\varepsilon x)$ for $h \in C^2_b(X,Y)$.
\begin{enumerate}[label = {\roman*)}, ref={\cref{SecondDerivativeBounded}\roman*)}]
\item\label{SecondDerivativeBounded1} We have $h^\varepsilon (0) = 0$ if $h(0) = 0$, $Dh^\varepsilon (0) = Dh(0)$ and $\supnorm{Dh^\varepsilon} = \supnorm{Dh}$.
\item\label{SecondDerivativeBounded2} We have $\supnorm{D^2h^\varepsilon} = \varepsilon \supnorm{D^2h}$.
\item\label{SecondDerivativeBounded3} For $h_1 \in C_b^2(Y,Z)$ and $h_2 \in C_b^2(X,Y)$ we have $h_1^\varepsilon \circ h_2^\varepsilon = (h_1 \circ h_2)^\varepsilon$.
\end{enumerate}
\end{lemma}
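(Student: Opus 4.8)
The plan is to verify each of the three claims by direct computation using the chain rule, noting that all three assertions are completely elementary consequences of the definition $h^\varepsilon(x) = \varepsilon^{-1} h(\varepsilon x)$.

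\textbf{Part i).} First I would observe that $h^\varepsilon(0) = \varepsilon^{-1} h(\varepsilon \cdot 0) = \varepsilon^{-1} h(0)$, which vanishes precisely when $h(0) = 0$. For the derivative, the chain rule gives $Dh^\varepsilon(x) = \varepsilon^{-1} \cdot Dh(\varepsilon x) \cdot \varepsilon = Dh(\varepsilon x)$, where the two factors of $\varepsilon$ (one from the scalar prefactor, one from differentiating the inner map $x \mapsto \varepsilon x$) cancel. Evaluating at $x=0$ yields $Dh^\varepsilon(0) = Dh(0)$. Taking the supremum norm over $x \in X$, and using that $x \mapsto \varepsilon x$ is a bijection of $X$, gives $\supnorm{Dh^\varepsilon} = \sup_{x} \operatornorm{Dh(\varepsilon x)} = \sup_{x} \operatornorm{Dh(x)} = \supnorm{Dh}$.

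\textbf{Part ii).} Differentiating the identity $Dh^\varepsilon(x) = Dh(\varepsilon x)$ once more with the chain rule produces $D^2 h^\varepsilon(x) = \varepsilon \cdot D^2 h(\varepsilon x)$, since now only the inner map contributes a factor $\varepsilon$ and there is no compensating prefactor. Taking supremum norms and again using the bijectivity of $x \mapsto \varepsilon x$ gives $\supnorm{D^2 h^\varepsilon} = \varepsilon \supnorm{D^2 h}$.

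\textbf{Part iii).} For the composition identity I would simply compute: for $x \in X$,
\begin{align*}
(h_1^\varepsilon \circ h_2^\varepsilon)(x) = h_1^\varepsilon\bigl( h_2^\varepsilon(x) \bigr) = \varepsilon^{-1} h_1\bigl( \varepsilon \cdot h_2^\varepsilon(x) \bigr) = \varepsilon^{-1} h_1\bigl( \varepsilon \cdot \varepsilon^{-1} h_2(\varepsilon x) \bigr) = \varepsilon^{-1} h_1\bigl( h_2(\varepsilon x) \bigr) = (h_1 \circ h_2)^\varepsilon(x),
\end{align*}
where the key cancellation is that the outer scaling of $h_2^\varepsilon$ by $\varepsilon$ exactly undoes the $\varepsilon^{-1}$ prefactor in the definition of $h_2^\varepsilon$, leaving $h_2$ evaluated at $\varepsilon x$. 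There is no real obstacle here; the only thing to be slightly careful about is bookkeeping the factors of $\varepsilon$ correctly in the chain rule, which is why I would write out the one-line computations explicitly rather than appeal to a general principle. The point of the lemma is precisely that this rescaling shrinks the second derivative by a factor $\varepsilon$ while leaving the $C^1$ data untouched and commuting with composition, so that it can be applied to $g$ and $k_c$ simultaneously (using part iii) to preserve the conjugacy structure) in the proof of \cref{MainTheorem}.
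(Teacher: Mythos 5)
Your computations are all correct, and they are exactly the ``straightforward computations'' that the paper's own (one-line) proof alludes to: the chain rule for parts i) and ii), with the bijectivity of $x \mapsto \varepsilon x$ justifying the equality of supremum norms, and the direct cancellation of $\varepsilon \cdot \varepsilon^{-1}$ for part iii). Nothing is missing.
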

\begin{proof} 
These results follow from straightforward computations.
\end{proof}

We can now prove \cref{MainTheorem}.

\begin{proof}[\underline{Proof of \cref{MainTheorem}}] Let $k_c :X_c \to X$ be chosen such that 
\begin{align*}
k_c \in \left\{ h \in C^{n}_b(X_c,X_c) \mid h(0) = 0 , \ Dh(0) = 0 \text{ and } \supnorm{Dh} \le L_c \right\}.
\end{align*}
Then it follows from \cref{SecondDerivativeBounded1} that for all $\varepsilon >0$ we have
\begin{align*}
k_c^{\varepsilon} \in \left\{ h \in C^{n}_b(X_c,X_c) \mid h(0) = 0 , \ Dh(0) = 0 \text{ and } \supnorm{Dh} \le L_c \right\}.
\end{align*}
Likewise, if $F{}{} = A + g$ satisfies the conditions of \cref{MainTheorem}, we have for all $\varepsilon > 0$ that $F{}{}^{\varepsilon} = A + g^{\varepsilon}$ with
\begin{align*}
g^{\varepsilon} \in \left\{ h \in C^{n}_b(X,X) \mid h(0) = 0 , \ Dh(0) = 0 \text{ and } \supnorm{Dh} \le L_g \right\}.
\end{align*}
Hence $F{}{}^{\varepsilon}$ also satisfies the conditions of \cref{MainTheorem}.

By \cref{SecondDerivativeBounded2} we can apply \cref{CnExistence} to $F{}{}^{\varepsilon}$ and $k_c^{\varepsilon}$ for $\varepsilon$ sufficiently small. We fix $\varepsilon > 0$ sufficiently small. \cref{CnExistence} then provides a $K{}{} :X_c \to X$ which conjugates $f^{\varepsilon} :X \to X$ with $A_c + r : X_c \to X_c$. Then we find from \cref{SecondDerivativeBounded3} that
\begin{align*}
 (F{}{} \circ K{}{}^{1/\varepsilon})^\varepsilon  =  F{}{}^\varepsilon \circ K{}{} = K{}{} \circ (A_c + r) =  (K{}{}^{1/\varepsilon} \circ (A_c + r^{1/\varepsilon}))^\varepsilon,
\end{align*}
and, again by \cref{SecondDerivativeBounded3},
\begin{align*}
F{}{} \circ K{}{}^{1/\varepsilon} = \left( (F{}{} \circ K{}{}^{1/\varepsilon})^\varepsilon \right)^{1/\varepsilon}  =  \left( \left(K{}{}^{1/\varepsilon} \circ (A_c + r^{1/\varepsilon}) \right)^\varepsilon \right)^{1/\varepsilon} = K{}{}^{1/\varepsilon} \circ (A_c + r^{1/\varepsilon}).
\end{align*}
Thus we see that $K{}{}^{1/\varepsilon} :X_c \to X$ conjugates $F{}{}: X \to X$ with $A_c + r^{1/\varepsilon} : X_c \to X_c$. Furthermore, by combining \cref{SecondDerivativeBounded1} with \cref{C1Existence}, it follows that $r^{1/\varepsilon}$ satisfies property \cref{PropertiesR} and $k_u^{1/\varepsilon}$ and $k_s^{1/\varepsilon}$ satisfy property \cref{PropertiesK}.
\end{proof}

\subsection{Uniqueness of the center manifold}

Since we found our conjugacy $K{}{}$ and dynamical system $A_c + r$ with a contraction argument, we have uniqueness of $\tmatrix{r}{k_u}{k_s}$ in $\Gamma_0$. However, we can improve the uniqueness of $k_u$ and $k_s$ to arbitrary bounded functions, and the uniqueness of $r$ to arbitrary bounded  functions such that $A_c + r$ is invertible.

\begin{lemma}\label{UniqueCenterManifold}Let $F{}{}:X \to X$ and $k_c : X_c \to X_c$ satisfy the conditions of \cref{MainTheorem}. Then 
\begin{align}
F{}{} \circ \begin{pmatrix}\operatorname{Id} + k_c \\ k_u \\ k_s \end{pmatrix} = \begin{pmatrix} \operatorname{Id} + k_c \\ k_u \\ k_s \end{pmatrix} \circ (A_c + r) \label{UniqueCenterManifoldEq}
\end{align}
 has a unique solution for $k_u \in C^0_b(X_c, X_u)$, $k_s \in C^0_b(X_c,X_s)$ and $r \in C^0_b(X_c, X_c)$  with the property that $A_c + r$ is a homeomorphism.
\end{lemma}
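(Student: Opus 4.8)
The plan is to reduce the uniqueness statement to the contraction arguments already established, but now in a space with \emph{no} derivative bounds. The key observation is that the operator $\Theta$ from \cref{FixedPointOperator} is already known to be a contraction in the $C^0$ norm with constant $\theta_0 = \max\{\theta_{0,1},\theta_{0,2},\theta_{0,3}\} < 1$ (see step \cref{C1Continuity1} of the proof of \cref{C1Continuity}), and crucially the estimates in that step never used the second-derivative bound defining $\Gamma_1(\delta(\varepsilon))$, nor the full strength of the first-derivative bounds — they only used $\supnorm{Dg_c}\le L_g$, $\supnorm{Dk_c}\le L_c$, $\supnorm{D\tilde k_u}\le L_u$, $\supnorm{D\tilde k_s}\le L_s$, $\supnorm{D\tilde K}\le 1+L_c$, and $L_r < \operatornorm{A_c^{-1}}^{-1}$, where the derivative bounds appearing there are on the \emph{second} argument (the $\tilde{\cdot}$ functions). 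So the first step is to set up the right ambient set. Suppose $\Lambda = \tmatrix{r}{k_u}{k_s}$ is any solution of \cref{UniqueCenterManifoldEq} with $k_u \in C^0_b(X_c,X_u)$, $k_s \in C^0_b(X_c,X_s)$, $r \in C^0_b(X_c,X_c)$, and $A_c + r$ a homeomorphism. By \cref{FixedPointOperatorProposition} (whose hypotheses require exactly that $A_c+r$ be globally invertible) such a $\Lambda$ is a fixed point of $\Theta$; note $\Theta$ is well-defined on such $\Lambda$ by \cref{GlobalDiffeomorphism2}, which guarantees $(A_c+r)^{-1} = A_c^{-1} + t$ with $t \in C^0_b(X_c,X_c)$.

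The obstacle is that for a \emph{contraction} argument one needs a single set on which $\Theta$ is simultaneously a well-defined self-map and a $C^0$-contraction, and the $C^0$-contraction estimate of step \cref{C1Continuity1} was carried out for $\Lambda,\tilde\Lambda \in \Gamma_1(\delta(\varepsilon)) \subset \Gamma_0$, i.e. assuming the derivative bounds. The resolution is that the estimate is genuinely asymmetric: re-examining \cref{rComponentEqC01,rComponentEqC02,uComponentEqC0,sComponentEqC0} together with \cref{DifferenceC0Norm1,DifferenceComponentThree1} and \cref{UniquenessInverseR}, one sees that to bound $\supnorm{\Theta_i(\Lambda) - \Theta_i(\tilde\Lambda)}$ one only needs the \emph{Lipschitz/derivative bounds on $\tilde\Lambda$} (the functions $\tilde k_u$, $\tilde k_s$, $\tilde K$, and invertibility of $A_c + \tilde r$), while $\Lambda$ may be an \emph{arbitrary} $C^0$ fixed point. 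Therefore I would argue as follows: let $\Lambda_\star = \tmatrix{r_\star}{k_{u,\star}}{k_{s,\star}} \in \Gamma_0$ be the fixed point produced by \cref{C1Existence} (which lies in $\Gamma_0$ and hence satisfies all the derivative bounds and has $A_c + r_\star$ a homeomorphism), and let $\Lambda$ be any $C^0$ fixed point as in the statement. Applying the estimates of step \cref{C1Continuity1} with $\tilde\Lambda = \Lambda_\star$ — which is legitimate since only $\Lambda_\star$'s bounds are invoked — and using \cref{UniquenessInverseR} for the third component (so that no bound on $Dr$ or $D^2 r$ beyond $\supnorm{Dr_\star} \le L_r < \operatornorm{A_c^{-1}}^{-1}$ for the $\star$-function is needed, and $A_c + r$ need only be a homeomorphism), we obtain
\begin{align*}
\supnorm{\Lambda - \Lambda_\star} = \supnorm{\Theta(\Lambda) - \Theta(\Lambda_\star)} \le \theta_0 \supnorm{\Lambda - \Lambda_\star},
\end{align*}
and since $\theta_0 < 1$ by \cref{SmallnessConditions} this forces $\supnorm{\Lambda - \Lambda_\star} = 0$, i.e. $\Lambda = \Lambda_\star$.

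A few technical points to verify along the way: first, that $\supnorm{\Lambda - \Lambda_\star} < \infty$ so the inequality can be divided through — this holds because both $\Lambda$ and $\Lambda_\star$ lie in $C^0_b$. Second, that the third-component estimate \cref{sComponentEqC0} can indeed be run with only $\Lambda$ continuous and bounded: this is exactly the content of \cref{UniquenessInverseR}, which weakens the hypotheses of \cref{DifferenceComponentThree1} to "$A_c + r$ a homeomorphism and $f_1$ continuous and bounded", matching our setting (with $f_1 = k_s$ or $f_1 = g_s \circ K$ and the $\star$-data playing the role of $f_2, \tilde r$). Third, that \cref{FixedPointOperatorProposition} applies to convert \cref{UniqueCenterManifoldEq} into the fixed-point equation $\Lambda = \Theta(\Lambda)$ — its only hypothesis is that $A_c + r$ be globally invertible, which is assumed. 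I expect the main obstacle to be purely expository: making explicit the asymmetry in the step \cref{C1Continuity1} estimates so that the reader is convinced no hidden regularity of $\Lambda$ is being used; once that is clear the proof is two lines. I would therefore write the proof by first quoting \cref{FixedPointOperatorProposition,GlobalDiffeomorphism2} to place $\Lambda$ in the fixed-point picture, then carefully restating which inequalities from the proof of \cref{C1Continuity} survive with $\Lambda$ merely in $C^0_b$, then concluding with the displayed contraction estimate above.
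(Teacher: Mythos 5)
Your proposal is correct and follows essentially the same route as the paper: both arguments convert any $C^0_b$ solution into a fixed point of $\Theta^{}$ via \cref{FixedPointOperatorProposition}, then rerun the $C^0$-contraction estimates of step \cref{C1Continuity1} asymmetrically (placing all derivative bounds on the smooth fixed point from \cref{MainTheorem}, invoking \cref{UniquenessInverseR} for the third component) to conclude $\supnorm{\Lambda - \Lambda_\star} \le \theta_0 \supnorm{\Lambda - \Lambda_\star}$ and hence equality. You have correctly identified the one genuinely delicate point, namely that the estimates of \cref{DifferenceC0Norm1,DifferenceComponentThree1} only require regularity of one of the two arguments.
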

\begin{proof} 
Let $\Lambda  = \tmatrix{r}{k_u}{k_s}  \in C_b^n(X_c ,X)$ be obtained from \cref{MainTheorem} and let $\mathcal{M} =  \tmatrix{\tilde{r}}{\tilde{k}_u}{\tilde{k}_s} \in C^0_b(X_c,X)$ be such that \cref{UniqueCenterManifoldEq} holds. Then we know from \cref{FixedPointOperatorProposition} that $\mathcal{M} = \Theta^{}(\mathcal{M})$ and $\Lambda = \Theta^{}(\Lambda)$. We can mimic step \cref{C1Continuity1} of the proof of \cref{C1Continuity}, since \cref{DifferenceC0Norm1,DifferenceComponentThree1} can both be applied to the components of $\Theta^{}(\mathcal{M}) - \Theta^{}(\Lambda)$, even though  $\mathcal{M}$ is merely $C^0$, see \cref{UniquenessInverseR}. Hence we obtain
\begin{align*}
\supnorm{\mathcal{M} - \Lambda} = \supnorm{\Theta^{}(\mathcal{M}) - \Theta^{}(\Lambda)} \le \theta_0 \supnorm{\mathcal{M} - \Lambda}.
\end{align*}
Since $\theta_0 < 1$, we find $\mathcal{M} = \Lambda$.
\end{proof}

We conclude from \cref{UniqueCenterManifold} that, given $k_c : X_c \to X_c$, the conjugacy between the center dynamical system $R : X_c \to X_c$ and the original dynamical system $F{}{}:X \to X$ is unique in the space of continuous functions with bounded unstable and stable components. Additionally, we want to show that the center manifold is unique independent of our choice of $k_c : X_c \to X_c$. That is, we want to prove that the image of the conjugacy does not depend on a given $k_c : X_c \to X_c$.

\begin{proposition}\label{LuckManifoldIndifferent}Let $F{}{}:X \to X$ and $k_c, \tilde{k}_c : X_c \to X_c$ satisfy the conditions  of \cref{MainTheorem}. Then the image of $K{}{} = \iota + \tmatrix{ k_c}{k_u}{k_s}$ and $\tilde{K{}{}} = \iota +  \tmatrix{ \tilde{k}_c}{\tilde{k}_u}{\tilde{k}_s}$ are the same, for $K{}{},\tilde{K{}{}}$ the (unique) conjugacy obtained from \cref{MainTheorem}.
\end{proposition}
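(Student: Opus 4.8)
The statement asserts that the center manifold — the image of the conjugacy — does not depend on the choice of the prescribed part $k_c$ of the conjugacy. Writing $W = K(X_c)$ and $\tilde W = \tilde K(X_c)$, the natural strategy is to build a conjugacy $\phi:X_c\to X_c$ between the two center dynamical systems $R = A_c+r$ and $\tilde R = A_c + \tilde r$ such that $\tilde K\circ\phi = K$; this immediately gives $W = K(X_c) = \tilde K(\phi(X_c)) \subseteq \tilde W$, and the symmetric construction gives the reverse inclusion (or, better, $\phi$ will be a homeomorphism so $\phi(X_c)=X_c$ and equality is immediate). So the real content is: produce $\phi$.

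\textbf{Constructing $\phi$.} The ansatz is $\phi = \operatorname{Id} + p$ with $p:X_c\to X_c$ bounded and $A_c+p$ a homeomorphism, determined by the requirement
\begin{align*}
\tilde K\circ(\operatorname{Id}+p) = K, \qquad\text{equivalently}\qquad \iota + \begin{pmatrix}\tilde k_c\\ \tilde k_u\\ \tilde k_s\end{pmatrix}\circ(\operatorname{Id}+p) = \iota + \begin{pmatrix}k_c\\ k_u\\ k_s\end{pmatrix}.
\end{align*}
Reading off the $X_c$-component gives $(\operatorname{Id}+p) + \tilde k_c\circ(\operatorname{Id}+p) = \operatorname{Id}+k_c$, i.e. $p = k_c - \tilde k_c\circ(\operatorname{Id}+p) =: \Xi(p)$. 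Since $\sup\|D\tilde k_c\|\le L_c < 1$, the map $\Xi$ is a contraction on $C^0_b(X_c,X_c)$ (and, by the now-familiar bootstrap via operators on derivatives as in \cref{GlobalDiffeomorphism,FixedPointOperatorC2,FixedPointOperatorCm}, its fixed point is $C^n$). Moreover $\sup\|Dp\| \le (L_c + L_c\sup\|Dp\|)$ forces $\sup\|Dp\|\le L_c/(1-L_c) < \operatorname{Id}$... more carefully, one checks $\sup\|Dp\|$ is small enough that \cref{GlobalDiffeomorphism1} applies, so $A_c+p = \operatorname{Id}+p$ is a global diffeomorphism; hence $\phi(X_c)=X_c$. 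With $p$ in hand, $\tilde K\circ\phi$ and $K$ agree in the $X_c$-component by construction; it remains to see they agree in the $X_u,X_s$-components, which is where uniqueness does the work.

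\textbf{Closing the argument with uniqueness.} Set $\hat r := \phi^{-1}\circ R\circ\phi - A_c$ and $\hat K := \tilde K\circ\phi$. A direct computation using $F\circ K = K\circ R$ shows $F\circ\hat K = \hat K\circ(A_c+\hat r)$, and $A_c+\hat r$ is a homeomorphism since it is conjugate to $R = A_c+r$ which is. Also $\hat K = \iota + \tmatrix{k_c}{\hat k_u}{\hat k_s}$ with $\hat k_u,\hat k_s$ bounded (compositions of bounded maps with the homeomorphism $\phi$), because the $X_c$-component of $\hat K$ is exactly $\operatorname{Id}+k_c$ by the defining equation for $p$. Thus $\hat K$ and $\hat r$ solve \cref{UniqueCenterManifoldEq} with the \emph{same} prescribed $k_c$, and both $\hat k_u,\hat k_s,\hat r$ are bounded with $A_c+\hat r$ a homeomorphism — so \cref{UniqueCenterManifold} forces $\hat k_u = k_u$, $\hat k_s = k_s$, $\hat r = r$, i.e. $\tilde K\circ\phi = K$. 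Combined with $\phi(X_c) = X_c$, this yields $\tilde K(X_c) = K(X_c)$, as claimed.

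\textbf{Expected main obstacle.} The routine parts (contraction for $\Xi$, smoothness bootstrap, invertibility of $A_c+p$) are minor variants of arguments already in the paper. The one point needing genuine care is verifying the smallness inputs: one must confirm that $\sup\|Dp\| \le L_c/(1-L_c)$ is indeed below $\operatorname{Id}$ — i.e. strictly less than $1$ — which holds precisely when $L_c < \tfrac12$, a consequence of $\theta_{\tilde n,1} = L_g+L_c < 1$ together with $L_c<1$ only if we also know $L_g>0$; so one should check this follows from \cref{SmallnessConditions} (it does, since the hypotheses there make $L_c$ as small as needed), and then confirm that the conjugated data $(\hat k_u,\hat k_s,\hat r)$ genuinely land in the function class to which \cref{UniqueCenterManifold} applies. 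That bookkeeping — rather than any new idea — is the crux.
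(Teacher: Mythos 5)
Your argument is essentially the paper's: you build the reparameterization $\operatorname{Id}+p=(\operatorname{Id}+\tilde k_c)^{-1}\circ(\operatorname{Id}+k_c)$ of $X_c$, observe that $\tilde K\circ(\operatorname{Id}+p)$ solves the conjugacy equation with center component $\operatorname{Id}+k_c$, bounded hyperbolic components, and invertible conjugated dynamics, and then invoke \cref{UniqueCenterManifold}; the paper does exactly this (with the roles of $k_c$ and $\tilde k_c$ swapped). Two small repairs are needed. First, $\hat r$ should be $\phi^{-1}\circ \tilde R\circ\phi - A_c$ with the computation running through $F\circ\tilde K=\tilde K\circ\tilde R$, since $\hat K=\tilde K\circ\phi$ is built from $\tilde K$, not $K$; as written the identity $F\circ\hat K=\hat K\circ(A_c+\hat r)$ does not follow from $F\circ K=K\circ R$. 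Second, your route to the invertibility of $\operatorname{Id}+p$ via \cref{GlobalDiffeomorphism1} requires $\supnorm{Dp}\le L_c/(1-L_c)<1$, i.e.\ $L_c<\tfrac12$, which is not among the hypotheses of \cref{MainTheorem} (none of the conditions in \cref{SmallnessConditions} force it). This detour is unnecessary: $\operatorname{Id}+p$ is the composition of the two maps $(\operatorname{Id}+\tilde k_c)^{-1}$ and $\operatorname{Id}+k_c$, each invertible by \cref{GlobalDiffeomorphism1} since $L_c<1$, so it is automatically a bijection of $X_c$ with inverse of the form $\operatorname{Id}+\phi$ by \cref{GlobalDiffeomorphism2} — which is precisely how the paper handles it, and it also spares you the contraction argument for $\Xi$.
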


\begin{proof}
From \cref{GlobalDiffeomorphism1} it follows that $\operatorname{Id} +  k_c$ is invertible, and from \cref{GlobalDiffeomorphism2} we see that its inverse is given by $\operatorname{Id} + \psi$ for some bounded function $\psi : X_c \to X_c$. In particular, we can write
\begin{align*}
\operatorname{Id}  + \tilde{k}_c = (\operatorname{Id}  + k_c) \circ ( \operatorname{Id} + \psi ) \circ ( \operatorname{Id} + \tilde{k}_c ) = (\operatorname{Id}   + k_c ) \circ ( \operatorname{Id} + \varphi ),
\end{align*}
where $\varphi =  \psi \circ ( \operatorname{Id} + \tilde{k}_c ) + \tilde{k}_c$ is a bounded function. Likewise, $\operatorname{Id} + \tilde{k}_c$ is invertible, and thus $\operatorname{Id} + \varphi = ( \operatorname{Id}  + k_c)^{-1} \circ ( \operatorname{Id}  + \tilde{k}_c)$ is invertible, as it is the composition of two invertible functions. By \cref{GlobalDiffeomorphism1} its inverse is given by $\operatorname{Id} + \phi$ for some bounded function $\phi$. We infer that
\begin{align*}
F{}{} \circ (K{}{} \circ (\operatorname{Id} + \varphi)) &= K{}{} \circ (A_c + r ) \circ (\operatorname{Id} + \varphi) \\
						&= (K{}{} \circ ( \operatorname{Id} + \varphi) ) \circ ( ( \operatorname{Id} + \phi)  \circ (A_c + r) \circ ( \operatorname{Id} + \varphi) ) \\
						&= \begin{pmatrix} \operatorname{Id} +  \tilde{k}_c \\ k_u \circ ( \operatorname{Id} + \varphi) \\ k_s \circ ( \operatorname{Id} + \varphi) \end{pmatrix} \circ (A_c + \Phi),
\end{align*}
where we define $\Phi = r \circ ( \operatorname{Id} + \varphi) + \phi \circ (A_c + r) \circ ( \operatorname{Id} + \varphi)$. By property \cref{PropertiesR} of \cref{MainTheorem}, we have that $A_c + r$ is invertible, and thus $A_c + \Phi$ is the composition of three invertible functions, hence invertible itself. Therefore, we use \cref{UniqueCenterManifold} to conclude $\tilde{k}_u = k_u \circ (\operatorname{Id} + \varphi)$ and $\tilde{k}_s = k_s \circ (\operatorname{Id} + \varphi)$. Since $\operatorname{Id} + \tilde{k}_c =  (\operatorname{Id} + k_c) \circ  (\operatorname{Id} + \varphi)$, we see that $\tilde{K{}{}} = K{}{} \circ  (\operatorname{Id} + \varphi)$. As $\operatorname{Id} + \varphi$ is invertible, we conclude that
\begin{align*}
\operatorname{Im}(K{}{}) = K{}{}(X_c) = K{}{} ( (\operatorname{Id} + \varphi)(X_c)) = \tilde{K{}{}}(X_c) = \operatorname{Im}(\tilde{K{}{}}). &\qedhere
\end{align*}
\end{proof}

\subsection{Proof of \texorpdfstring{\cref{AlmostSolution}}{main corollary}}\label{ProofofMainCorollary}

Finally, we want to show that if we have found approximations of the center manifold and the center dynamics, i.e., we have found an approximate conjugacy $K{}{}_0$ and approximate dynamical system $R_0$ such that
\begin{align*}
\norm{ F{}{} \circ K{}{}_0 - K{}{}_0 \circ R_0}_m \le \varepsilon,
\end{align*}
then the center manifold and dynamical system lie close to these approximations. 

\begin{proof}[\underline{Proof of \cref{AlmostSolution}}] Let $m < n$ and let $k_0 = \tmatrix{k_{u,0}}{k_{s,0}}{} : X_c \to X_u \oplus X_s$ and $r_0 : X_c \to X_c$. We use $\mathcal{M}$ to denote $\tmatrix{r_0}{k_{u,0}}{k_{s,0}}$ and $\mathcal{F} =  F \circ \tmatrix{k_c}{k_{u,0}}{k_{s,0}} - \tmatrix{k_c}{k_{u,0}}{k_{s,0}} \circ (A_c + r_0)$. We assume that $k_0$ and $r_0$ satisfy the conditions of \cref{AlmostSolution}, that is, there exists constants $M > 0$ and $\varepsilon > 0$ such that
\begin{align*}
 k_0 &\in \left\{ h \in C_b^{m+1}(X_c,X_u \oplus X_s ) \ \middle| \ h(0) = 0,\   Dh(0) = 0  \text{ and } \norm{h}_{m+1} \le M \right\}, \\
 r_0 &\in \left\{ h \in C_b^{m+1}(X_c,X_c ) \ \middle| \ h(0) = 0, \  Dh(0) = 0, \supnorm{Dh} \le L_r \text{ and } \norm{h}_{m+1} \le M \right\}, \\
\mathcal{F} &\in \left\{ h \in C_b^m(X_c, X) \middle| \ \norm{h}_m \le \varepsilon \right\}.
\end{align*}
Our proof consists of the following steps:
\begin{enumerate}
\item We prove that if $\mathcal{F}$ is small in $C^m$, then $\mathcal{M} - \Theta^{}(\mathcal{M})$ is small in $C^m$.
\item We prove that if $\mathcal{M} - \Theta^{}(\mathcal{M})$ is small in $C^0$, then $\mathcal{M} - \Lambda$ is small in $C^0$, where $\Lambda$ is the fixed point of $\Theta^{}$. 
\item Using induction, we prove that if $\mathcal{M} - \Theta^{}(\mathcal{M})$ is small in $C^m$, then $D^m\mathcal{M} - D^m\Lambda$ is small in $C^0$.
\end{enumerate}
For the first step, we want an explicit estimate for $\mathcal{M} - \Theta^{}(\mathcal{M})$. By definition of $\Theta^{}$, see \cref{FixedPointOperator}, we have 
\begin{align*}
\mathcal{M} - \Theta^{}(\mathcal{M}) = \begin{pmatrix} - \mathcal{F}_1 \\ A_u^{-1} \mathcal{F}_2 \\ - \mathcal{F}_3 \circ (A_c + r_0)^{-1}\end{pmatrix}.
\end{align*}
We can clearly estimate the $C^m$ norm of the first two components by $\norm{\mathcal{F}}_m \le \varepsilon$ and $\operatornorm{A_u^{-1}} \norm{\mathcal{F}}_m \le \operatornorm{A_u^{-1}} \varepsilon$ respectively. For the third component, we use Fa\'{a} di Bruno's formula and \cref{HigherDerivativeInverse} to obtain an estimate. We find
\begin{align*}
\norm{\mathcal{F}_3 \circ (A_c + r_0)^{-1}}_m &\le \mathcal{C}\left(D(A_c + r_0)^{-1}, D^i(A_c + r_0) \text{ for } i \le m \right)\norm{\mathcal{F}_3}_m \\
					&\le \mathcal{C}_1(M,m) \varepsilon,
\end{align*}
where we used that the derivatives of $r_0$ are bounded by $M$ and $D(A_c + r_0)^{-1}$ is bounded by $L_{-1}$ as $Dr_0$ is bounded by $L_r$. Hence we obtain
\begin{align}
\norm{\mathcal{M} - \Theta^{} (\mathcal{M})}_m \le \max\{ 1 , \operatornorm{A_u^{-1}}, \mathcal{C}_1(M,m) \} \varepsilon = \mathcal{C}_2(M,m) \varepsilon. \label{NewEq2}
\end{align}
Here $\mathcal{C}_2(M,m) \isdef \max\{ 1 , \operatornorm{A_u^{-1}}, \mathcal{C}_1(M,m) \}$. Hence we have shown that if $\mathcal{F}$ is small, then $\mathcal{M}$ is an almost fixed point of $\Theta^{}$.

To prove the second step, we use that $\Theta^{}$ is a contraction in the $C^0$ norm with contraction constant $\theta_0$, see the proof of \cref{UniqueCenterManifold}, thus we have
\begin{align*}
\supnorm{\Lambda - \mathcal{M}} &\le \supnorm{\Theta^{} ( \Lambda) - \Theta^{}(\mathcal{M})} + \supnorm{ \mathcal{M} - \Theta^{}(\mathcal{M})} \\
			&\le \theta_0 \supnorm{\Lambda - \mathcal{M}} + \supnorm{ \mathcal{M} - \Theta^{}(\mathcal{M})}.
\end{align*}
By rewriting, we obtain the estimate
\begin{align}
\supnorm{\Lambda - \mathcal{M}} \le \frac{1}{1- \theta_0} \supnorm{ \mathcal{M} - \Theta^{}(\mathcal{M})}. \label{NewEq1}
\end{align}
Combining \cref{NewEq1,NewEq2} proves \cref{AlmostSolution} for $m=0$, that is
\begin{align*}
\norm{\Lambda - \mathcal{M}}_0 \le \frac{\mathcal{C}_2(M,0) }{1- \theta_0} \varepsilon \isfed \mathcal{C}(M,0) \varepsilon.
\end{align*}

To prove step 3, we will use induction. So let us assume that $\norm{\Lambda - \mathcal{M}}_{m-1} \le \mathcal{C}(M,m-1) \varepsilon$. By construction of the contraction $\Theta^{[m+1]}$, its fixed point is $D^m \Lambda$. Thus similarly to \cref{NewEq1}, we have
\begin{align}
\supnorm{ D^m \Lambda - D^m \mathcal{M}} \le \frac{1}{1 - \theta_m} \supnorm{ D^m \mathcal{M} - \Theta^{[m+1]}(D^m \mathcal{M})}. \label{NewEq4}
\end{align}
From \cref{NewEq2}, we know that $\supnorm{D^m \left( \mathcal{M} - \Theta^{}(\mathcal{M}) \right)} \le \mathcal{C}_1(M,m)\varepsilon$. Hence we estimate \cref{NewEq4} with the triangle inequality and the estimate on $D^m \left( \mathcal{M} - \Theta^{}(\mathcal{M}) \right)$ to obtain
\begin{align}
\supnorm{ D^m \Lambda - D^m \mathcal{M}} \le \frac{\mathcal{C}_2(M,m) \varepsilon}{1 - \theta_m} + \frac{ \supnorm{D^m \Theta^{}(\mathcal{M}) - \Theta^{[m+1]}(D^m \mathcal{M})} }{1- \theta_m}. \label{NewEq3}
\end{align}
Therefore, it remains to find a bound on $D^m \Theta^{}(\mathcal{M}) - \Theta^{[m+1]}(D^m \mathcal{M})$. Recall from \cref{FixedPointOperatorEquation} the definition of $\Theta^{}$. By Fa\'{a} di Bruno's formula, there exists a function $\mathcal{G}$ such that for all $\mathcal{T} :X_c \to X$
\begin{align}
D^m \Theta^{}(\mathcal{T}) = \mathcal{G}(\mathcal{T}, D\mathcal{T}, \dots, D^m \mathcal{T}). \label{NewEq5}
\end{align}
By construction, $\mathcal{G}(\mathcal{T}, \dots, D^m \mathcal{T})(x)$ is a linear combination of products of various derivatives of $\mathcal{T}$, evaluated at either $x$ or $\mathcal{T}(x)$.
In particular, we have for $\mathcal{M} : X_c \to X$
\begin{align}
D^m \Theta^{}(\mathcal{M}) &= \mathcal{G}(\mathcal{M}, D \mathcal{M}, \dots , D^{m-1} \mathcal{M}, D^m \mathcal{M} ).  \label{NewEq6}
\intertext{On the other hand, by  definition of $\Theta^{[m+1]}$, see \cref{FixedPointOperatorCmEquation}, we also have that}
\Theta^{[m+1]}(D^m \mathcal{M}) &= \mathcal{G}(\Lambda, D \Lambda, \dots , D^{m-1} \Lambda, D^m \mathcal{M} ).  \label{NewEq7}
\end{align}
Subtracting \cref{NewEq7} from \cref{NewEq6} we thus obtain 
\begin{align}
D^m \Theta^{}(\mathcal{M}) -  \Theta^{[m+1]}(D^m \mathcal{M}) &= \int_0^1 D \mathcal{G}(\mathcal{N}(s), D^m \mathcal{M}) \text{d}s \begin{pmatrix} \Lambda - \mathcal{M} \\ \vdots \\ D^{m-1} \left( \Lambda - \mathcal{M} \right) \\ 0 \end{pmatrix}, \label{NewEq10}
\end{align}
where $\mathcal{N}(s) =(1-s) \left(\mathcal{M} , \dots, D^{m-1} \mathcal{M} \right) + s \left(\Lambda, \dots,  D^{m-1} \Lambda \right)$. 

To calculate the partial derivative of $\mathcal{G}$ in the direction of its first input, we use the chain rule and obtain an expression depending on $\mathcal{G}$, $\mathcal{N}(s)$, $D \mathcal{N}(s)$, $D^m \mathcal{M}$ and $D^{m+1} \mathcal{M}$. The other partial derivatives of $\mathcal{G}$ are partial derivatives of polynomials, hence only depend on $\mathcal{G}$, $\mathcal{N}(s)$ and $D^m \mathcal{M}$. In particular, $D\mathcal{G}$ is continuous and evaluated on the compact set $\left\{ (\mathcal{N}(s), D^m \mathcal{M}) \mid s \in [0,1] \right\}$ in \cref{NewEq10} -- note that $\Lambda, \mathcal{M} \in C_b^{m+1}(X_c, X)$. Thus $\operatornorm{D\mathcal{G}(\mathcal{N}(s), D^m\mathcal{M})} \le \mathcal{C}_3(M,m)$ for all $s \in [0,1]$. Therefore, we can estimate \cref{NewEq10} by
\begin{align*}
\supnorm{D^m \Theta^{}(\mathcal{M}) -  \Theta^{[m+1]}(D^m \mathcal{M})} &\le \mathcal{C}_3(M,m) \norm{\Lambda - \mathcal{M}}_{m-1} \\
			&\le \mathcal{C}_3(M,m) \mathcal{C}(M,m-1) \varepsilon.
\end{align*}
Using \cref{NewEq3} and our induction hypothesis, we thus conclude that 
\begin{align*}
\norm{\Lambda - \mathcal{M}}_m &= \max\left\{ \norm{\Lambda - \mathcal{M}}_{m-1}, \supnorm{D^m\Lambda - D^m \mathcal{M}} \right\} \\
					&\le \max\left\{ \mathcal{C}(M,m-1) , \frac{\mathcal{C}_2(M,m) +  \mathcal{C}_3(M,m) \mathcal{C}(M,m-1) }{1 - \theta_m} \right\} \varepsilon\\
					&\isfed \mathcal{C}(M,m) \varepsilon. \qedhere
\end{align*}
\end{proof}

\bibliographystyle{abbrv}
\bibliography{Bib} 

\end{document}